\documentclass[11pt,reqno]{amsart}
\usepackage{geometry}                
\geometry{a4paper}                   
\usepackage{graphicx}
\usepackage{amssymb}
\usepackage{epstopdf}
\usepackage{amsmath}
\usepackage{color}
\usepackage{hyperref}
\usepackage{pdfsync}
\usepackage{float}
\usepackage{verbatim}
\usepackage{tikz}
\usetikzlibrary{arrows,decorations.pathmorphing,backgrounds,positioning,fit,petri}
\usetikzlibrary{arrows,patterns,matrix}
\usetikzlibrary{decorations.markings}

\usepackage[all]{xy}
\xyoption{matrix}
\xyoption{arrow}
\usepackage[table]{colortbl}
 
\tikzset{help lines/.style={step=#1cm,very thin, color=gray},
help lines/.default=.5} 
\tikzset{thick grid/.style={step=#1cm,thick, color=gray},
thick grid/.default=1} 


\usepackage{tcolorbox}  
\tcbset{colframe=black,colback=white, width=(\linewidth-2mm), before=,after=\hfill,} 
\usepackage{graphicx}
\usepackage{tikz}
\usetikzlibrary{mindmap,shapes,snakes,matrix}

\tikzstyle{ann}=[fill=white, inner sep=1pt, font=\footnotesize{#1}]
\tikzstyle{annfar}=[inner sep=2pt, font=\footnotesize{#1}]
\tikzstyle{annfarer}=[inner sep=3pt, font=\footnotesize{#1}]
\tikzstyle{annrot}=[fill=white, text=blue!75!black, inner sep=1pt, font=\footnotesize{#1}]
\tikzstyle{wall}=[thick]
\tikzstyle{nullwall}=[thick, dotted]

\textwidth = 6.2 in 
\textheight = 8.8 in 
\oddsidemargin = 0.1 in 
\evensidemargin = 0.1 in 
\topmargin = 0.1 in
\headheight = 0.0 in
\headsep = 0.25 in
\parskip = 0.0in
\parindent = 0.2in

 
\newtheorem{thm}{Theorem}[subsection]
\newtheorem{thm*}{Theorem}
\newtheorem{lem}[thm]{Lemma}
\newtheorem{cor}[thm]{Corollary}
\newtheorem{prop}[thm]{Proposition}
\newtheorem{conj}[thm]{Conjecture}

\theoremstyle{definition}
\newtheorem{defn}[thm]{Definition}

\newtheorem{exm}[thm]{Example}

\newtheorem{rmk}[thm]{Remark}

 \newcounter{tmp}

\numberwithin{equation}{section}

\DeclareGraphicsRule{.tif}{png}{.png}{`convert #1 `dirname #1`/`basename #1 .tif`.png}

 






\DeclareMathOperator{\Hom}{Hom}%
\DeclareMathOperator{\Ext}{Ext}%
\DeclareMathOperator{\End}{End}%
%
%
\DeclareMathOperator{\add}{add} 
%
%
%
%
%
%
%
\DeclareMathOperator{\modd}{mod}%
%
%
%
%


\newcommand{\commentout}[1]{}



\newcommand{\Top}{\operatorname{top}}
\newcommand{\rad}{\operatorname{rad}}
\newcommand{\soc}{\operatorname{soc}}
\newcommand{\Ker}{\operatorname{Ker}}
\newcommand{\cok}{\operatorname{Coker}}
\newcommand{\Ima}{\operatorname{Im}}

\begin{document}

\title{Functors and Morphisms determined by subcategories}
\subjclass[2010]{18A05,16G20, 16G70}

\keywords{morphisms determined by objects, Krull-Schmidt categories, almost split sequences, strongly locally finite quivers}

\author{Shijie Zhu}
\address{Department of Mathematics\\Northeastern University\\Boston, MA 02115}
\email{zhu.shi@husky.neu.edu} 


\maketitle

\begin{abstract}
  We study the existence and uniqueness of minimal right determiners in various categories. Particularly in a $\Hom$-finite hereditary abelian category with enough projectives, we prove that the Auslander-Reiten-Smal{\o}-Ringel formula of the minimal right determiner still holds.  As an application, we give a formula of minimal right determiners in the category of finitely presented representations of strongly locally finite quivers.
\end{abstract}
\tableofcontents

\section{Introduction}

The concept of morphisms determined by objects was introduced by Maurice Auslander in his Philadelphia Notes \cite{A} published in 1978. Although it was originally introduced in a functorial way, it was mainly applied to studying categories of finitely generated modules over Artin algebras.  Throughout thirty years, mathematicians, including Henning Krause \cite{K1}\cite{K4}, Claus Michael Ringel \cite{R1}\cite{R2}, Xiao-Wu Chen and Jue Le \cite{C}\cite{CL}, Jan \v{S}aroch \cite{S} and many others, have been further developing Auslander's idea of morphisms determined by objects. Krause successfully develops this theory in triangulated categories \cite{K1}. Ringel also gives a clear outline of Auslander's ideas without using functorial language \cite{R1}. 

 The general existence theorems of right (left) determiners in module categories were established in \cite{A}. Although in the category of finitely generated modules over Artin algebra, the minimal right (left) determiner of a morphism does exist and is unique, their existence and uniqueness have not been studied in general.  Our motivation was inspired by Question 5.3 in \cite{K1}, where the author raises the question about the existence of a minimal class of objects which can determine a morphism in some category. (See Definition \ref{mdo} and \ref{mrd defn} for definitions of (minimal) right determiners.)

  We show that for a given morphism in a Krull-Schmidt category, when the socle of the cokernel functor is essential, there exists a unique minimal right determiner (Theorem \ref{exist}).
  
  While these two properties are equivalent in the module categories of artin algebras, the existence of the essential socle functor only implies the existence of minimal right determiner in general. As shown in Example \ref{non-strong ex}, the existence of minimal right determiner does not imply the existence of essential socle functor. However, in general for arbitary Krull-Schmidt categories  the socle functor $\soc\cok(-,f)$ being essential is equivalent only to the existence of strong minimal right determiner (See Definition \ref{strong defn}).
  
%
  \begingroup
\setcounter{tmp}{\value{thm}}
\setcounter{thm}{0} 
\renewcommand\thethm{\Alph{thm}}

  \begin{thm}\label{exist}
Let $\mathcal C$ be a Krull-Schmidt category. Let $f:X\rightarrow Y$ be a morphism and let $\Hom(-,X)\xrightarrow{(-,f)}\Hom(-,Y)\stackrel{\pi}\longrightarrow\cok(-,f)\rightarrow 0$ be the induced exact sequence of functors.  Then $\soc \cok(-,f)$ is essential if and only if $f$ has a strong minimal right determiner.
\end{thm}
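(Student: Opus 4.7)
The plan is to exploit the Yoneda-type correspondence between indecomposable objects equipped with ``almost factoring'' morphisms to $Y$ and simple subfunctors of $\cok(-,f)$. Specifically, in a Krull-Schmidt category, each pair $(C,c)$ with $C$ indecomposable (so $\End(C)$ local) and $c\colon C\to Y$ a morphism that does not factor through $f$ but with $cr$ factoring through $f$ for every $r\in\rad\End(C)$ gives rise to a simple subfunctor $S_{(C,c)}=\langle\pi(c)\rangle\subseteq\cok(-,f)$; conversely, every simple subfunctor of $\cok(-,f)$ arises in this way from a pair $(C,c)$ unique up to the natural equivalence. This correspondence, together with the definition of \emph{strong} minimal right determiner in Definition~\ref{strong defn}, will be the conceptual backbone of the proof.

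For the forward direction, I would assume $\soc\cok(-,f)$ is essential and, to each simple subfunctor $S$, attach an indecomposable $C_S$ together with a distinguished morphism $c_S\colon C_S\to Y$ as above. Setting $C=\bigoplus_S C_S$ with the total morphism $c\colon C\to Y$, the claim is that this $C$ is a strong minimal right determiner. To verify the determining property, given $g\colon T\to Y$ not factoring through $f$, the element $\pi(g)\in\cok(-,f)(T)$ is nonzero, so the subfunctor of $\cok(-,f)$ generated by $\pi(g)$ meets $\soc\cok(-,f)$ nontrivially by essentiality; unwinding this yields some $h\colon C_S\to T$ with $\pi(gh)\ne 0$, i.e.\ $gh$ does not factor through $f$. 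Minimality and strongness then follow from the fact that the summands $C_S$ are in bijection with the simple subfunctors, so no summand can be dropped without destroying determinacy.

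For the reverse direction, I would assume $f$ admits a strong minimal right determiner $C=\bigoplus_i C_i$ and use the \emph{strong} hypothesis to associate to each summand $C_i$ (with its distinguished morphism $c_i\colon C_i\to Y$) a simple subfunctor $S_i=\langle\pi(c_i)\rangle\subseteq\cok(-,f)$. Given any nonzero subfunctor $F\subseteq\cok(-,f)$, pick a nonzero element $\pi(g)\in F(T)$. Since $g$ does not factor through $f$, determinacy produces $h\colon C_i\to T$ with $\pi(gh)\ne 0$, and by naturality $\pi(gh)$ lies in the image of the natural transformation $(-,C_i)\to\cok(-,f)$ sending $\mathrm{id}_{C_i}$ to $\pi(c_i)$. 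Using that $c_i$ almost factors through $f$ (which is exactly what the strong condition guarantees), this image is $S_i$, so $\pi(gh)\in S_i\cap F$ forces $S_i\subseteq F$, and essentiality of the socle follows.

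The main obstacle is the reverse direction: one must show that the \emph{strong} condition is exactly what guarantees that each summand of the determiner corresponds to an honest simple subfunctor, rather than merely a nonzero one. Example~\ref{non-strong ex} shows that ordinary minimality is insufficient, so the strong condition must be invoked decisively---most naturally through the property that the distinguished morphism $c_i$ generates a simple quotient of the representable $(-,C_i)$ via $\pi$, i.e.\ $c_i r$ factors through $f$ for every $r\in\rad\End(C_i)$. The bookkeeping of matching summands with simple subfunctors, and verifying well-definedness under the Krull-Schmidt decomposition, is where the technical work concentrates.
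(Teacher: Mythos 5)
Your forward direction is essentially the paper's proof (Theorem \ref{exist1}): essentiality of the socle intercepts any non-factoring $g\colon T\to Y$ by a simple subfunctor lying inside the subfunctor of $\cok(-,f)$ generated by $\pi(g)$, and Lemma \ref{aft} identifies the indecomposables realizing the simple summands of the socle with objects that almost factor through $f$, whence minimality (via Lemma \ref{common}) and strongness. Two small points: you should work with the subcategory $\add\{C_S\}$ rather than a possibly infinite direct sum $\bigoplus_S C_S$, and ``no summand can be dropped'' needs the one-line reason that any morphism into $C_S$ from a non-isomorphic indecomposable is radical, so its composite with $c_S$ factors through $f$.

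The reverse direction has a genuine gap. You take a nonzero $\pi(g)\in F(T)$, use \emph{determinacy} to produce $h\colon C_i\to T$ with $\pi(gh)\neq 0$, and then assert that ``by naturality'' $\pi(gh)$ lies in the image of the natural transformation $(-,C_i)\to\cok(-,f)$ sending $\mathrm{id}_{C_i}$ to $\pi(c_i)$. That image evaluated at $C_i$ is the orbit $\{\pi(c_i\phi)\mid\phi\in\End(C_i)\}$, whereas $\pi(gh)$ is merely some nonzero element of $\cok(-,f)(C_i)$; there is no reason it should lie in that orbit, and determinacy only guarantees that $gh$ does not factor through $f$, not that it \emph{almost} factors through $f$. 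The hypotheses you actually invoke --- each summand $C_i$ carries a morphism $c_i$ almost factoring through $f$, plus determinacy --- amount to the determiner being \emph{semi-strong}, and the implication ``semi-strong minimal right determiner $\Rightarrow$ essential socle'' is false: Example \ref{non-strong ex} gives a semi-strong, non-strong minimal right determiner with $\soc\cok(-,f)$ not essential (the absolute non-factorization there generates a nonzero subfunctor containing no simple subfunctor). The strong hypothesis must instead be applied to the arbitrary morphism $g$ itself, as in Proposition \ref{exist3}: since $g$ restricted to a suitable indecomposable summand of $T$ is not an absolute non-factorization (Definition \ref{anf defn}), there exist an indecomposable $Z$ and $h\colon Z\to T$ such that $gh$ \emph{almost} factors through $f$; then $\pi(gh)\in F(Z)$ generates a simple subfunctor isomorphic to $(-,Z)/\rad(-,Z)$ by Lemma \ref{aft}, and essentiality follows.
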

  
   We also show that if a morphism has a minimal right determiner then it is unique (Theorem \ref{unique}) in certain classes of Krull-Schmidt categories (see Remark \ref{D} for classes of such categories). 
   
   \begin{thm}
Let $\mathcal C$ be a Krull-Schmidt category and let $\mathcal D$ be a minimal right determiner of  $f:X\rightarrow Y$. Suppose for each $Z\in \mathcal D$ there is a natural number $n$ such that $(\rad\End(Z))^n\Hom(Z,Y)\subseteq \Ima\Hom(Z,f)$. Then $\mathcal D$ is the unique minimal right determiner.
\end{thm}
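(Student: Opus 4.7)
The plan is to show that any minimal right determiner $\cD'$ of $f$ must coincide with $\cD$ up to isomorphism, which immediately yields uniqueness. Working in a Krull--Schmidt category, I may assume both $\cD$ and $\cD'$ consist of pairwise non-isomorphic indecomposables, so the task reduces to the two inclusions of iso classes $\cD \subseteq \cD'$ and $\cD' \subseteq \cD$.

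For the first inclusion, fix $Z \in \cD$. Minimality of $\cD$ produces $g:W \to Y$ with $\overline{g} \in \cok(W,f)$ nonzero, such that $\overline{gh} = 0$ for every $h:Z' \to W$ with $Z' \in \cD$ and $Z' \not\cong Z$, while $\overline{gh_0} \ne 0$ for some $h_0:Z \to W$. The radical condition forces the $\End(Z)$-submodule of $\cok(Z,f)$ generated by $\overline{gh_0}$ to be nonzero and annihilated by $(\rad\End(Z))^n$; since $\End(Z)$ is local, its socle as an $\End(Z)$-module is nonzero. I therefore choose $r \in \End(Z)$ so that $g' := gh_0 r$ satisfies $\overline{g'} \ne 0$ and $\overline{g'}\cdot\rad\End(Z) = 0$. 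Applying the determiner property of $\cD'$ to $g'$ produces $Z'' \in \cD'$ and $k:Z'' \to Z$ with $\overline{g'k} \ne 0$.

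To show $Z'' \cong Z$, suppose not. Applying the determiner property of $\cD$ to $g'k:Z'' \to Y$ yields $Z_1 \in \cD$ and $k_1:Z_1 \to Z''$ with $\overline{g'kk_1} \ne 0$. Since $\overline{g'kk_1} = \cok(h_0rkk_1, f)(\overline{g})$, the defining property of $g$ forces $Z_1 \cong Z$. Fixing any isomorphism $Z_1 \cong Z$, the composite $kk_1$ induces an endomorphism of $Z$ that cannot lie in $\rad\End(Z)$ (else the socle property of $\overline{g'}$ would give $\overline{g'kk_1} = 0$), hence is an isomorphism by the locality of $\End(Z)$. Therefore $Z$ is a direct summand of the indecomposable $Z''$, forcing $Z \cong Z''$ and contradicting the assumption.

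Thus each $Z \in \cD$ has an isomorphic copy in $\cD'$. Let $\cE \subseteq \cD'$ be the subset of objects isomorphic to some element of $\cD$; since being a right determiner is invariant under replacing objects by isomorphic ones, $\cE$ is itself a right determiner, and minimality of $\cD'$ forces $\cE = \cD'$. Hence $\cD = \cD'$ as iso classes. The main obstacle is the bootstrap in the third paragraph: the case $Z'' \not\cong Z$ is excluded only by applying $\cD$ as a determiner to $g'k$, using the sharp defining property of $g$ to force $Z_1 \cong Z$, and then invoking the socle refinement of $\overline{g'}$ together with the indecomposability of $Z''$.
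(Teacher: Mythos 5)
Your proof is correct, and it takes a genuinely different route from the paper's. Both arguments rest on the same two ingredients: the isolating morphism $g\colon T\rightarrow Y$ supplied by Lemma \ref{in mrd}, and the hypothesis $(\rad\End(Z))^n\Hom(Z,Y)\subseteq \Ima\Hom(Z,f)$. The paper uses them to iterate Lemma \ref{chain}: starting from $g\psi_0$ it repeatedly produces non-isomorphisms $Z\rightarrow Z$ (forced to be endomorphisms of $Z$ by the isolating property of $g$) whose composites lie in increasing powers of $\rad\End(Z)$, so the process terminates and yields a morphism $Z\rightarrow Y$ that \emph{almost factors through} $f$; thus the paper proves the stronger conclusion that $\mathcal{D}$ is semi-strong, and uniqueness follows from the general fact (Lemmas \ref{common} and \ref{exist min}) that objects almost factoring through $f$ lie in every right determiner. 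You instead package the same finiteness argument as a single socle computation in the cyclic $\End(Z)$-module generated by $\overline{gh_0}$, obtaining $g'$ with $\overline{g'}\neq 0$ annihilated by $\rad\End(Z)$. Note that this is strictly weaker than almost factoring through $f$ (which requires $g'h$ to factor through $f$ for every radical morphism $h\colon U\rightarrow Z$ from an arbitrary $U$, not only for radical endomorphisms of $Z$), so Lemma \ref{common} is not available to you; your third-paragraph bootstrap --- applying the determiner property of an arbitrary right determiner $\mathcal{D}'$ to $g'$ and then that of $\mathcal{D}$ to $g'k$, and using the socle property to force $kk_1$ to be a unit of $\End(Z)$, so that $Z$ splits off the indecomposable $Z''$ --- is the necessary substitute, and it works. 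The trade-off is that your argument proves exactly the uniqueness claim asked for, while the paper's yields in addition that $\mathcal{D}$ is semi-strong, which is reused later. Two minor cosmetic points: the degenerate case $\mathcal{D}=\mathbf{0}$ (i.e.\ $f$ a split epimorphism) should be dispatched before invoking Lemma \ref{in mrd}, and the detour through $\mathcal{E}$ in your final paragraph is unnecessary, since subcategories closed under summands are closed under isomorphism, so $\mathcal{D}\subseteq\mathcal{D}'$ together with minimality of $\mathcal{D}'$ already gives $\mathcal{D}=\mathcal{D}'$.
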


More importantly, the existence of the minimal right determiner is closely related with the existence of  almost split sequences. We also give a formula of the minimal right determiner of a morphism in a Krull-Schmidt $\Hom$-finite hereditary abelian category:  

\begin{thm} \label{C}
Let $\mathcal C$ be a Krull-Schmidt $\Hom$-finite hereditary abelian $k$-category. Let $f$ be a morphism in $\mathcal C$. Then $f$ has a strong minimal right determiner if and only if: 
\begin{enumerate}
\item 
each indecomposable summand of the intrinsic kernel  $\widetilde\Ker(f)$ is the starting term of an almost split seqeunce; 
\item the socle $\soc\cok(f)$ is essential;
\item each simple subobject of $\cok f$ has a projective cover.
\end{enumerate}
Furthermore, if the minimal right determiner exists, it is of the same form as in the Auslander-Reiten-Smal{\o}-Ringel formula, i.e. $\mathcal D=\add( \tau^-\widetilde\Ker f\oplus P(\soc\cok f))$. Here $\tau^-$ denotes the right end terms of the almost split sequences. 
\end{thm}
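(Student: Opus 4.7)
The plan is to invoke Theorem~\ref{exist} to reduce the statement to showing that $\soc\cok(-,f)$ is essential if and only if conditions (1)--(3) hold, and then to analyze $\cok(-,f)$ through a short exact sequence of functors that the hereditary hypothesis puts at our disposal.

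Factor $f$ through its image as $X\twoheadrightarrow I\hookrightarrow Y$ and set $K=\Ker f$, $C=\cok f$. Splicing the long exact $\Hom$--$\Ext$ sequences attached to $0\to K\to X\to I\to 0$ and $0\to I\to Y\to C\to 0$, and using that $\Ext^2=0$, one arrives at a short exact sequence of functors
$$0\to F'\to \cok(-,f)\to F''\to 0,$$
with $F'=\Ker\bigl(\Ext^1(-,K)\to\Ext^1(-,X)\bigr)\subseteq\Ext^1(-,K)$ and $F''=\Ker\bigl(\Hom(-,C)\to\Ext^1(-,I)\bigr)\subseteq\Hom(-,C)$. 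The next step is the standard classification of simple subfunctors of each half: $S_D\hookrightarrow\Ext^1(-,M)$ with $D$ indecomposable exists if and only if $M$ has an indecomposable summand $M_i$ starting an almost split sequence and $D=\tau^-M_i$, the embedding being generated by the class of that sequence (because the radical of $\End(\tau^-M_i)$ annihilates it); dually, $S_Z\hookrightarrow\Hom(-,N)$ exists if and only if $Z=P(S)$ is the projective cover of a simple subobject $S\hookrightarrow N$, the embedding being generated by $P(S)\twoheadrightarrow S\hookrightarrow N$.

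For the $(\Leftarrow)$ direction, I would feed (1)--(3) into this recipe. For each indecomposable summand $K_i$ of $\widetilde\Ker f$ the almost split class supplied by (1) lives in $F'$ (the defining property of the intrinsic kernel is precisely that $K_i\hookrightarrow X$ is not split, so the class is non-zero modulo its image in $\Ext^1(-,X)$), giving $S_{\tau^-K_i}\hookrightarrow F'$ and making $\soc F'$ essential. Conditions (2)--(3) likewise produce an essential $\soc F''$. Since an extension of functors with essential socle has essential socle, $\soc\cok(-,f)$ is essential, and Theorem~\ref{exist} supplies the strong minimal right determiner; reading off the indecomposables $Z$ for which $S_Z\hookrightarrow\cok(-,f)$ yields exactly $\mathcal D=\add(\tau^-\widetilde\Ker f\oplus P(\soc\cok f))$, matching the Auslander--Reiten--Smal{\o}--Ringel formula.

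For the converse $(\Rightarrow)$, essentiality of $\soc\cok(-,f)$ descends immediately to $\soc F'$ (subfunctors of $F'$ are subfunctors of $\cok(-,f)$), forcing (1) via the classification. For (2) and (3), given a non-zero subobject $T\subseteq C$, I would lift it to a non-zero subfunctor of $\cok(-,f)$ and argue that, because a simple subfunctor of $F'$ is supported on some $\tau^-K_i$ and cannot arise from a subobject of $C$, essentiality must produce a simple subfunctor living in $F''$, which then corresponds to a simple subobject of $T$ equipped with a projective cover. The main obstacle I expect lies in this last step: disentangling simple subfunctors of $\cok(-,f)$ that live in $F'$ from those witnessing simple subobjects of $C$ requires a careful use of the disjoint nature of the supports of $F'$ (indecomposables of the form $\tau^-K_i$) and $F''$ (projective covers of simples of $C$). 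A secondary technical point is to pin down the correct definition of the intrinsic kernel $\widetilde\Ker f$ (kernel modulo summands split from $X$) and to verify that these are exactly the summands contributing simples to $F'$.
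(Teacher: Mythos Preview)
Your route is genuinely different from the paper's. The paper never writes down the short exact sequence $0\to F'\to\cok(-,f)\to F''\to 0$; instead it argues object by object, using a factorization lemma (Lemma~\ref{pullback}) together with chains of non-factorizations (Lemmas~\ref{chain}--\ref{no N aft}) to prove Propositions~\ref{ker}, \ref{cok}, \ref{strong essential}, \ref{suff} separately. Your functorial organization is cleaner and makes the roles of (1), (2), (3) transparent: (1) governs $F'\subseteq\Ext^1(-,K)$, while (2)+(3) govern $F''\subseteq(-,C)$.

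There is one real gap in your $(\Leftarrow)$ direction. From (1) you only extract embeddings $S_{\tau^-K_i}\hookrightarrow F'$, and then assert ``making $\soc F'$ essential''; but existence of a socle is not essentiality. What you need is that when $K_i$ starts an almost split sequence $0\to K_i\to E_i\to N_i\to 0$, the simple $S_{N_i}$ is the \emph{essential} socle of $\Ext^1(-,K_i)$: given any non-split $\eta\colon 0\to K_i\to E\to Z\to 0$, the left almost split property of $K_i\to E_i$ factors $K_i\to E$ through $E_i$, producing a morphism of extensions and hence some $g\colon N_i\to Z$ with $g^*\eta=\xi_i$. This puts $S_{N_i}$ inside the cyclic subfunctor generated by $\eta$. (Your parenthetical about $\xi_i$ is also garbled: ``$K_i\hookrightarrow X$ not split'' is what lets you apply left almost split to factor $K_i\to X$ through $E_i$, hence the pushout to $\Ext^1(-,X)$ \emph{vanishes}; this is why $\xi_i\in F'$, not why it is ``non-zero modulo its image''.) The analogous essentiality of $\soc F''$ from (2)+(3) you should also spell out: given $0\ne\alpha\in G(Z)\subseteq(Z,C)$, use (2) to find a simple $S\subseteq\Ima\alpha$, use (3) to get $P(S)$, and lift $P(S)\to S$ through $Z$ by projectivity.

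For the $(\Rightarrow)$ direction your disjoint-support intuition is correct and can be made precise: $F'\subseteq\Ext^1(-,K)$ vanishes on projectives, while simple subfunctors of $(-,C)$ are supported on indecomposable projectives $P(S)$. But to exploit this for a given $T\subseteq C$ you still need to manufacture a non-zero class in $\cok(-,f)$ attached to $T$, and this is exactly where the hereditary hypothesis enters via the paper's Lemma~\ref{cd}. Once you have $g\colon E\to Y$ with $0\to X\to E\twoheadrightarrow T\to 0$, any simple $S_Z$ in the cyclic subfunctor $\langle[g]\rangle$ comes from $\phi\colon Z\to E$ with $g\phi$ almost factoring through $f$; if $Z$ is non-projective, the paper's Lemma~\ref{nonproj} forces $\Ima(g\phi)\subseteq\Ima f$, hence $t\phi=0$ and $g\phi$ factors through $f$, a contradiction. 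So $Z$ is projective, $\Top Z\hookrightarrow T$, and $Z=P(\Top Z)$. This is precisely Lemma~\ref{no N aft}, and it is the step both approaches share; your functorial packaging does not bypass it.
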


As an application in Section \ref{inf q} we will concentrate on the strongly locally finite quivers (see Definition \ref{slf}) introduced by Raymundo Bautista, Shiping Liu and Charles Paquette \cite{BLP}. Suppose $Q$ is a strongly locally finite quiver. The category of finitely presented representations $rep^+(Q)$, as defined in \ref{rep(q)}, is a Krull-Schmidt, $\Hom$-finite, $\Ext$-finite, hereditary and abelian category (\cite{BLP}, 1.15). 
Recall that a path  of the following form: $$\bullet\rightarrow\bullet\rightarrow\bullet\rightarrow \cdots$$ is called a right infinite path.
We show that for a strongly locally finite quiver $Q$ having no right infinite path, every morphism in $rep^+(Q)$ admits a minimal right determiner:

 \begin{thm} \label{no r inf path}
Let $Q$ be a strongly locally finite quiver. Then the following are equivalent:\\
$(1)$ $Q$ has no right infinite paths.\\
$(2)$ $rep^+(Q)$ is left Auslander-Reiten (indecomposable objects have left almost split morphisms). \\
$(3)$ Any morphism $f:X\rightarrow Y$ in $rep^+(Q)$ has a unique minimal right determiner.\\
\end{thm}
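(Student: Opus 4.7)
\emph{Proof proposal.} My plan is to establish the cycle of implications $(1)\Rightarrow(2)\Rightarrow(3)\Rightarrow(1)$, using Theorem~\ref{C} as the main engine. Throughout the argument I rely on the basic facts from \cite{BLP} that $rep^+(Q)$ is Krull--Schmidt, $\Hom$-finite, $\Ext$-finite, hereditary and abelian, so that Theorem~\ref{C} applies in this setting. For $(1)\Rightarrow(2)$, I invoke the construction of left almost split morphisms in $rep^+(Q)$ carried out in \cite{BLP}: in the absence of right infinite paths, each indecomposable $Z$ admits a minimal projective presentation whose dual produces a left almost split morphism starting at $Z$.

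For $(2)\Rightarrow(3)$, I verify conditions (1)--(3) of Theorem~\ref{C} for an arbitrary $f:X\rightarrow Y$ in $rep^+(Q)$. Condition (1) follows directly from hypothesis (2) after noting that $rep^+(Q)$ has no nonzero injective objects for a strongly locally finite $Q$, so the guaranteed left almost split morphism at each indecomposable summand of $\widetilde{\Ker}(f)$ extends to an almost split sequence. For condition (3), each simple $S_x\in rep^+(Q)$ has projective cover $P_x$, which is itself finitely presented by local finiteness of $Q$. For condition (2), essentiality of $\soc\cok(f)$, I analyse the support of $\cok f$ vertex by vertex, using finite-dimensionality at each vertex to guarantee that every nonzero subobject of $\cok f$ contains a simple. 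Uniqueness of the resulting minimal right determiner then follows from the uniqueness theorem stated earlier in the introduction, since $\rad\End(Z)$ acts nilpotently on the finite-dimensional quotient $\Hom(Z,Y)/\Ima\Hom(Z,f)$ for each indecomposable summand $Z$ of the determiner.

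For $(3)\Rightarrow(1)$, I argue by contrapositive: supposing $Q$ contains a right infinite path starting at $x_0$, I consider the morphism $f:S_{x_0}\rightarrow 0$, whose intrinsic kernel is $S_{x_0}$ itself. If $f$ admitted a minimal right determiner, Theorem~\ref{C} would force $S_{x_0}$ to start an almost split sequence, contradicting the analysis in \cite{BLP} which rules out such a sequence precisely when $x_0$ initiates a right infinite path. The main obstacle I anticipate is the verification of essentiality of $\soc\cok(f)$ in full generality: although finite-dimensionality vertex by vertex provides a nonzero socle, proving it is essential in $\cok f$ requires careful control of the global behaviour of a finitely presented representation and may necessitate decomposing $\cok f$ as a colimit of finite-dimensional subobjects and tracking how each interacts with the socle.
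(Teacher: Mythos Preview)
Your overall strategy via Theorem~\ref{C} is close to the paper's, but there are genuine gaps in two of the three implications.

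\textbf{In $(2)\Rightarrow(3)$:} The assertion that $rep^+(Q)$ has no nonzero injective objects is false (e.g.\ $S_x$ is injective whenever $x$ is a source). What you actually need is that the intrinsic kernel $\widetilde{\Ker} f$ has no injective summand, which follows because an injective summand of $\Ker f$ would split off $X$ and be killed by $f$, contradicting right minimality. More seriously, your proposed argument for essentiality of $\soc\cok f$ is wrong-headed: local finite-dimensionality alone does \emph{not} guarantee a nonzero socle, let alone an essential one. For the right-infinite path $1\to 2\to\cdots$ one has $\soc P_1=0$ even though $P_1$ is locally finite-dimensional. The paper instead passes through $(1)$ and quotes \cite[1.1]{BLP}, which says precisely that the absence of right infinite paths forces $\soc M$ to be essential for every $M\in rep^+(Q)$; this is where that hypothesis is actually used, and your vertex-by-vertex plan cannot substitute for it.

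\textbf{In $(3)\Rightarrow(1)$:} Your test morphism $f:S_{x_0}\to 0$ does not work. Any morphism $X\to 0$ is a split epimorphism, so its right minimal version is $0\to 0$, the intrinsic kernel is $0$ (not $S_{x_0}$), and the minimal right determiner is trivially $\mathbf 0$; no contradiction can arise. The paper argues differently: assuming $(3)$, Corollary~\ref{mrd quiver} forces the intrinsic kernel of every morphism to lie in $rep^b(Q)$. Since every indecomposable non-injective $M$ occurs as the intrinsic kernel of a suitable epimorphism (embed $M$ non-splitly into some $N$ and take $N\twoheadrightarrow N/M$), every such $M$ is finite-dimensional. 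Now if $x\to y$ is the first arrow of a right infinite path, $P_y\notin rep^b(Q)$, so $P_y$ would have to be injective, whence $P_y\hookrightarrow P_x$ splits---absurd. To salvage your approach you would need to exhibit, for a given right infinite path, a specific right minimal morphism whose intrinsic kernel is a chosen indecomposable that provably fails to start an almost split sequence; this essentially forces you back to the paper's line of argument.
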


\endgroup

{\bf Acknowledgement:}  This project is under the guidance of Professor Gordana Todorov. The author will thank Charles Paquette and Shiping Liu for helpful discussions at the XXVII$^{th}$ Meeting on Representation Theory of Algebras; Dan Zacharia for his hospitality during my visit at Syracuse University; as well as Yingdan Ji for pointing out mistakes during the seminars.

\section{Morphisms determined by objects and subcategories}
Throughout the paper, we always assume our category $\mathcal C$ is skeletally small (i.e. isomorphism classes of objects form a set). 

\subsection{Definitions}  Let $\mathcal C$ be an additive category. We will generalize the notion of morphisms determined by objects to morphisms determined by subcategories in $\mathcal D\subseteq \mathcal C$. Throughout the paper subcategories are considered to be full subcategories closed under direct sums and summands. 

\begin{defn}\label{mdo}
Suppose $\mathcal D$ is a full subcategory of $\mathcal C$ closed under direct sums and summands. A morphism $f:X\rightarrow Y$ is said to be {\bf right determined by $\mathcal D$} (or {\bf right $\mathcal D$-determined}) if for every morphism $f^\prime: X^\prime\rightarrow Y$ the following conditions are equivalent:

\noindent $(a)$ The morphism $f^\prime$ factors through $f$.\\
\noindent $(b)$ For any object $Z\in \mathcal D$ and any morphism $g: Z\rightarrow X^\prime$ the composition $f^\prime g$ factors through~$f$. 
\vskip5pt
If $f$ is right determined by $\mathcal D$, $\mathcal D$ is also called {\bf a right determiner of $f$}. If $\mathcal D$ contains only finitely many non-isomorphic indecomposable objects, then we say $f$ is {\bf right determined by an object}.
\end{defn}

Notice that the condition $(a)$ always implies condition $(b)$. So the definition can also be equivalently phrased in the following way which will be used often:

\begin{prop}\label{contrapositive}
A morphism $f:X\rightarrow Y$ is right $\mathcal D$-determined if and only if for each morphism $f^\prime:X^\prime \rightarrow Y$ which cannot factor through $f$ there exists an object $Z\in \mathcal D$ and a morphism $g:Z\rightarrow X^\prime$ such that $f^\prime g$ cannot factor through $f$.
\end{prop}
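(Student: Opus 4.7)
The plan is to observe that this proposition is a direct logical reformulation of Definition \ref{mdo}, combined with the remark made immediately afterward that condition (a) always implies condition (b). No categorical machinery is required; the proof is a single application of contraposition.

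First I would spell out the content of Definition \ref{mdo} carefully. The statement that $f$ is right $\mathcal{D}$-determined asserts that for every $f':X'\to Y$, the conditions
\[
\text{(a) } f' \text{ factors through } f, \qquad \text{(b) } f'g \text{ factors through } f \text{ for every } Z\in\mathcal D, \ g:Z\to X',
\]
are equivalent. I would then note (as in the paragraph just before the proposition) that (a)$\Rightarrow$(b) is automatic: if $f'=fh$ for some $h:X'\to X$, then $f'g = f(hg)$ factors through $f$ for any $g:Z\to X'$. Thus the nontrivial content of being right $\mathcal{D}$-determined is the implication (b)$\Rightarrow$(a), required to hold for every $f':X'\to Y$.

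Next I would simply write the contrapositive of (b)$\Rightarrow$(a): for every $f':X'\to Y$, if $f'$ does not factor through $f$, then it is not the case that $f'g$ factors through $f$ for all $Z\in\mathcal D$ and all $g:Z\to X'$; equivalently, there exist an object $Z\in\mathcal D$ and a morphism $g:Z\to X'$ such that $f'g$ does not factor through $f$. This is verbatim the condition appearing in the proposition. Conversely, any $f'$ satisfying the hypothesis of the proposition also satisfies the contrapositive of (b)$\Rightarrow$(a), and the reverse direction of the iff in the proposition follows identically.

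There is no real obstacle here; the only thing to be mindful of is to keep the quantifiers in the right order when forming the contrapositive of (b), and to flag explicitly that the trivial implication (a)$\Rightarrow$(b) is what makes the equivalence in Definition \ref{mdo} collapse to the single implication whose contrapositive is the proposition's statement.
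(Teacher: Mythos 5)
Your proposal is correct and matches the paper's own treatment: the paper simply observes that condition $(a)$ of Definition \ref{mdo} always implies condition $(b)$, so the definition reduces to the single implication $(b)\Rightarrow(a)$, whose contrapositive is exactly the stated proposition. Your explicit verification that $f'=fh$ gives $f'g=f(hg)$, and your care with the quantifier order in the contrapositive, fill in precisely the routine details the paper leaves implicit.
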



\begin{rmk} Here we state some facts which are useful and not hard to check.
\begin{enumerate}
\item Although $\mathcal D$ is defined as a full subcategory, we are only interested in its objects. 
\item Every subcategory of $\mathcal C$ contains the zero object. Denote by $\mathbf 0$ the subcategory containing only the zero object in $\mathcal C$. Then a morphism is a split epimorphism if and only if it is right $\mathbf 0$-determined.
\item Suppose $\{D_i\}_{\i\in I}$ is a complete set of representatives of isomorphism classes of indecomposable objects in $\mathcal D$, then we say $\mathcal D$ is additively generated by $\{D_i\}$ (or $D_i$'s are additive generators of $\mathcal D$). Denote $\mathcal D=\add\{D_i\}$. When $\{D_i\}_{i\in I}$ is a finite set, we say that $f$ is right determined by the object $\oplus_{i\in I}D_i$.

\item If $\mathcal C=\Lambda\textendash\modd$ for an artin algebra $\Lambda$, then any morphism is right determined by some $\mathcal D$ containing finitely many non-isomorphic indecomposable objects. i.e. every morphism is right determined by an object. (\cite{ARS}[X, Prop. 2.4])
\item A morphism $f$ is left $\mathcal D$-determined, if $f$ is right $\mathcal D$-determined in the opposite category.
\end{enumerate}
\end{rmk}

\vskip10pt

The following Lemma can be verified by the universal property of direct sums.  

\begin{lem} \label{sum}
Let $f:X\rightarrow Y$ be a morphism in an additive category $\mathcal C$. Let $Z=\mathop{\bigoplus}\limits_{i\in I}Z_i$ and $h=(h_i): Z\rightarrow Y$ be a morphism.Then the following statements are equivalent:
\begin{enumerate}
\item The morphism $h:Z\rightarrow Y$ can factor through $f$.
\item The morphisms $h_i:Z_i\rightarrow Y$  can factor through $f$ for $\forall i\in I$.
\end{enumerate}
\end{lem}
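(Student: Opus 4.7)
My plan is to reduce the equivalence to the universal property of the direct sum $Z = \bigoplus_{i \in I} Z_i$, which is granted by the hypothesis that $Z$ is an object of $\mathcal C$. Write $\iota_i : Z_i \to Z$ for the canonical inclusions; the notation $h = (h_i)$ then simply records that $h_i = h \circ \iota_i$ for every $i \in I$.

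The direction $(1) \Rightarrow (2)$ I would dispose of immediately: given a factorization $h = f \circ g$ with $g : Z \to X$, precomposition with $\iota_i$ produces a factorization $h_i = f \circ (g \circ \iota_i)$, exhibiting $g \circ \iota_i : Z_i \to X$ as the required morphism. This step is purely formal and uses only that composition is associative.

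For the converse $(2) \Rightarrow (1)$, I would choose, for each $i \in I$, a morphism $g_i : Z_i \to X$ with $h_i = f \circ g_i$. The universal property of the coproduct then assembles these into a unique $g : Z \to X$ with $g \circ \iota_i = g_i$ for all $i$. Both $f \circ g$ and $h$ are morphisms $Z \to Y$ satisfying $(-) \circ \iota_i = h_i$ for every $i$; so the uniqueness clause of the universal property, applied to the cocone $(h_i)_{i \in I}$, forces $f \circ g = h$.

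The only point requiring any care is the case of an infinite index set $I$, since in a general additive category only finite coproducts are guaranteed to exist. This is not a genuine obstacle, because the statement already presumes that $Z = \bigoplus_{i \in I} Z_i$ exists in $\mathcal C$; the universal property is then available essentially by definition, and no additional assumption on $\mathcal C$ is needed. Thus the main (and only) ingredient of the argument is the universal property of coproducts, exactly as the statement of the lemma suggests.
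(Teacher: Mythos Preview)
Your proof is correct and follows exactly the approach indicated by the paper, which simply states that the lemma ``can be verified by the universal property of direct sums'' without spelling out the details. Your write-up is a faithful unpacking of that remark, including the appropriate caveat about the existence of the coproduct when $I$ is infinite.
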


%

%
%

Applying this Lemma, Definition \ref{mdo} and Proposition \ref{contrapositive} can be rephrased in terms of indecomposable objects  in the following way:
\begin{prop}\label{mdo2}
Suppose $\mathcal D$ is a full subcategory of $\mathcal C$ closed under direct sums and summands. A morphism $f:X\rightarrow Y$ is right determined by $\mathcal D$, if and only if for every morphism $f^\prime: X^\prime\rightarrow Y$ the following conditions are equivalent:
\begin{enumerate}
\item The morphism $f^\prime$ factors through $f$.
\item For any indecomposable object $Z\in \mathcal D$ and any morphism $g: Z\rightarrow X^\prime$ the composition $f^\prime g$ factors through $f$.
\end{enumerate}
\end{prop}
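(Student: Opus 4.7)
The plan is to reduce Definition \ref{mdo} (which tests with arbitrary objects of $\mathcal D$) to the indecomposable-only version in (2) by invoking Lemma \ref{sum} together with the closure of $\mathcal D$ under direct sums and summands. Both implications are then essentially bookkeeping; the only real content is a single indecomposable decomposition and one application of Lemma \ref{sum}.

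For the ($\Rightarrow$) direction, suppose $f$ is right $\mathcal D$-determined. Implication (1) $\Rightarrow$ (2) is immediate: if $f' = f \phi$ for some $\phi$, then $f' g = f(\phi g)$ factors through $f$ for every $g: Z \to X'$. For (2) $\Rightarrow$ (1), I would fix $f': X' \to Y$ satisfying (2) and verify condition (b) of Definition \ref{mdo}; the right $\mathcal D$-determinedness of $f$ then delivers (a), which is (1). To verify (b), take an arbitrary $W \in \mathcal D$ and $h: W \to X'$, and decompose $W = \bigoplus_{i} Z_i$ into indecomposable summands. Since $\mathcal D$ is closed under summands, each $Z_i \in \mathcal D$; writing $h = (h_i)$ componentwise, assumption (2) gives that each $f' h_i$ factors through $f$, and Lemma \ref{sum} then assembles these factorizations into one for $f' h$.

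For the ($\Leftarrow$) direction, assume (1) $\Leftrightarrow$ (2) holds for every test morphism $f': X' \to Y$, and verify (a) $\Leftrightarrow$ (b) of Definition \ref{mdo}. Again (a) $\Rightarrow$ (b) is trivial. For (b) $\Rightarrow$ (a), simply observe that (b) restricted to indecomposable $Z \in \mathcal D$ is exactly (2); the hypothesis then yields (1), which is (a). The only subtle point in the whole argument is the decomposition $W = \bigoplus_i Z_i$ used above, which relies on the paper's standing Krull-Schmidt context so that every object of $\mathcal D$ splits as a finite direct sum of indecomposables. Once that is granted, the proof is routine and the proposition is really a convenient reformulation rather than a new input.
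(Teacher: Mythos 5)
Your proof is correct and matches the paper's intended argument: the paper gives no explicit proof, simply asserting that the reformulation follows from Lemma \ref{sum}, and your write-up is exactly that argument fleshed out (decompose $W\in\mathcal D$ into indecomposable summands, apply Lemma \ref{sum}, and note the backward direction is a pure restriction). Your caveat is well placed --- the decomposition into finitely many indecomposables is genuinely needed in the forward direction and is not guaranteed by the bare ``additive category'' hypothesis under which the proposition is stated, but it holds under the paper's standing convention that $\mathcal D=\add\{D_i\}$ for indecomposables $D_i$ (and in the Krull--Schmidt settings where the result is actually used).
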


 \begin{cor}\label{contrapositive2}
A morphism $f:X\rightarrow Y$ is right $\mathcal D$-determined if and only if for each morphism $f^\prime:X^\prime \rightarrow Y$ which cannot factor through $f$ there exists an indecomposable object $Z\in \mathcal{D}$ and a morphism $g:Z\rightarrow X^\prime$ such that $f^\prime g$ cannot factor through $f$.
\end{cor}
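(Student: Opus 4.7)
The plan is to derive this corollary as a direct contrapositive reformulation of Proposition \ref{mdo2}. Recall from the remark immediately following Definition \ref{mdo} that condition $(a)$ always implies condition $(b)$; the analogous implication (1)$\Rightarrow$(2) holds in Proposition \ref{mdo2} as well, since if $f^\prime$ factors as $f\circ h$ then $f^\prime g = f\circ(hg)$ factors through $f$ for any $g:Z\to X^\prime$. Therefore the content of ``$f$ is right $\mathcal{D}$-determined'' reduces, via Proposition \ref{mdo2}, to the single implication (2)$\Rightarrow$(1) holding for every $f^\prime:X^\prime\to Y$.

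Now I would simply contrapose this implication. The negation of (1) is exactly the hypothesis ``$f^\prime$ cannot factor through $f$'', and the negation of (2) is exactly ``there exists an indecomposable $Z\in\mathcal{D}$ and a morphism $g:Z\to X^\prime$ such that $f^\prime g$ cannot factor through $f$''. So the statement (2)$\Rightarrow$(1) for every $f^\prime$ is logically equivalent to the assertion that for every $f^\prime$ which fails to factor through $f$ there must exist such an indecomposable $Z$ and morphism $g$. This yields both directions of the corollary in a single step.

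There is no real obstacle here, since this is a purely formal consequence of Proposition \ref{mdo2}. The only thing to be careful about is the quantifier structure: ``right $\mathcal{D}$-determined'' is a universal statement over all $f^\prime$, so the contrapositive must be taken inside that universal quantifier, producing another universal statement (``for each $f^\prime$ not factoring through $f$, there exists $Z\in\mathcal{D}$ indecomposable and $g$\ldots''). No further appeal to the Krull--Schmidt structure or to Lemma \ref{sum} is needed beyond what was already used to establish Proposition \ref{mdo2}.
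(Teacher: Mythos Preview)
Your proposal is correct and matches the paper's approach exactly: the paper states this corollary immediately after Proposition \ref{mdo2} without any proof, indicating it is meant to be read as the straightforward contrapositive reformulation you give. Your care with the quantifier structure is appropriate, and no additional ingredients are needed.
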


\begin{defn}\label{r minimal}
 A morphism $f:X\rightarrow Y$ is called {\bf right minimal} if for any $g:X\rightarrow X$ satisfying $fg=f$, the morphism $g$ is an automorphism.
 \end{defn}
It is easy to check that the following are equivalent:\\
(1) $f:X\rightarrow Y$ is right minimal.\\
(2) A direct summand $X^\prime$ of $X$ equals zero whenever $f(X^\prime)=0$.

 For each morphism $f:X\rightarrow Y$, we can always assume $f=(f_1,0): X_1\oplus X_2\rightarrow Y$ such that $f_1:X_1\rightarrow Y$ is right minimal. The morphisms $f_1$ is called the {\bf right minimal version} $f_1$. 
 
 Let $f$ be a morphism in an abelian category $\mathcal C$ with right minimal version $f_1$. Then the object $\widetilde{\Ker f}:=\Ker f_1$ is called the {\bf intrinsic kernel} of $f$.
 
 \vskip 5pt
The following Proposition is easy to check by definition:

\begin{prop} \label{r min ver}
A morphism $f$ is right determined by $\mathcal D$ if and only if its right minimal version $f_1$ is right determined by $\mathcal D$.  
 \end{prop}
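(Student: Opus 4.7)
The plan is to reduce the proposition to one elementary observation: for every morphism $h \colon W \to Y$, $h$ factors through $f$ if and only if $h$ factors through $f_1$. Once this is established, conditions $(a)$ and $(b)$ of Definition \ref{mdo} for $f$ become word-for-word the same as the corresponding conditions for $f_1$ (both involve factoring certain morphisms with codomain $Y$ through $f$, respectively $f_1$), so the biconditional ``$(a) \Leftrightarrow (b)$'' holds for $f$ precisely when it holds for $f_1$.

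To verify the observation, I would use the decomposition $f = (f_1, 0) \colon X_1 \oplus X_2 \to Y$. Given a factorization $h = f \circ \binom{u}{v}$ with $\binom{u}{v} \colon W \to X_1 \oplus X_2$, expanding the matrix product yields $h = f_1 u + 0 \cdot v = f_1 u$, so $h$ factors through $f_1$. Conversely, given $h = f_1 u$, the composite $f \circ \binom{u}{0}$ equals $h$, so $h$ factors through $f$. Applying this twice, once to $f'$ and once to each composite $f' g$ appearing in condition $(b)$, completes the argument.

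There is no real obstacle in this proof; it is a formal consequence of the fact that $f$ and $f_1$ differ only by a zero direct summand on the source side, which is invisible to the ``factors through'' relation. The only bookkeeping point is that the test domain $X'$ in the definition (and the pair $(Z, g \colon Z \to X')$ of Proposition \ref{contrapositive}) is intrinsic to the morphism $f'$ and does not depend on whether we test against $f$ or against $f_1$, so the quantifiers over test data match up correctly on the two sides of the equivalence.
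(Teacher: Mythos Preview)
Your argument is correct and is exactly the elementary verification the paper has in mind: the paper states the proposition as ``easy to check by definition'' without writing out a proof, and your reduction to the observation that a morphism $h\colon W\to Y$ factors through $f=(f_1,0)$ if and only if it factors through $f_1$ is precisely that check.
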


\vskip10pt

\subsection{The minimal right determiner}\label{mrd}
By definition, if $\mathcal D$ is a right determiner of $f$ and $\mathcal{D}\subseteq \mathcal{D}^\prime$ then $\mathcal {D}^\prime$ is also a right determiner of $f$. So we are particularly interested in the ``minimal'' one under containment among all the right determiners.

\begin{defn}\label{mrd defn}
Let $f$ be a morphism in an additive category $\mathcal C$. A full subcategory $\mathcal D$ closed under direct sums and summands is said to be {\bf a minimal right determiner} of $f$, if $\mathcal D$ is a right determiner of $f$ and $f$ is not right determined by any full subcategory $\mathcal{D}^\prime$ closed under direct sums and summands such that $\mathcal {D}^\prime\subsetneq \mathcal{D}$.
\end{defn}

\begin{rmk}\label{mrd not minimal} Let $\mathcal C$ be an additive category and $f$ be a morphism in $\mathcal C$.
\begin{enumerate}
\item A morphism $f$ is a split epimorphism if and only if $\bf 0$ is the minimal right determiner.
\item A morphism $f$ has a minimal right determiner $\mathcal D$ if and only if its right minimal version $f_1$ has a minimal right determiner $\mathcal D$.
\item Any morphism has a right determiner $\mathcal{D}=\mathcal C$, but there exists morphisms which do not have a minimal right determiner.
See Example \ref{n_ex}.
\end{enumerate}
\end{rmk}


Auslander, Reiten, Smal{\o} and Ringel give an explicit formula of the minimal right determiner in the category of finitely generated modules over an artin algebra.

\begin{thm}\cite{ARS}\cite{R2}
Let $\Lambda$ be an artin algebra and  $\mathcal C=\Lambda\textendash\modd$ be the category of finitely generated modules over $\Lambda$. Let $f:X\rightarrow Y$ be a morphism in $\mathcal C$. Then:
\begin{enumerate}
\item The minimal right determiner is a  summand of $\tau^-\widetilde{\Ker f}\oplus P(\soc \cok f)$, where $\tau$ is the Auslander-Reiten translation, $\widetilde{\Ker f}$ is the intrinsic kernel of $f$ and $P(-)$ is the projective cover. 
\item Furthermore, if $\Lambda$ is hereditary, then $\tau^-\widetilde{\Ker f}\oplus P(\soc \cok f)$ is the minimal right determiner of $f$.
\end{enumerate}
\end{thm}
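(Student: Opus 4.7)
The plan is to combine Theorem~\ref{exist} with Auslander-Reiten theory so as to identify the minimal right determiner functorially, via simple subfunctors of the cokernel functor $\cok(-,f)$. For $\Lambda$ an artin algebra, every finitely presented contravariant functor $F$ on $\modd\Lambda$ has an essential socle --- evaluating $F$ at any module produces a finite length module, so every nonzero subfunctor contains a simple subfunctor. Applying Theorem~\ref{exist}, $f$ admits a (strong) minimal right determiner $\mathcal D$, and unwinding the construction $\mathcal D=\add\{M\mid S_M\hookrightarrow \cok(-,f)\}$, where $S_M:=\Hom(-,M)/\rad(-,M)$ is the simple functor attached to the indecomposable $M$.

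The central computation is the classification of indecomposables $M$ such that $S_M\hookrightarrow \cok(-,f)$. First, if $M$ is indecomposable projective, the Yoneda lemma gives $S_M(M)=\Top M$, and a direct check shows $S_M\hookrightarrow \cok(-,f)$ iff the simple top $\Top M$ embeds into $\cok f$; so the projective contributions are exactly $P(T)$ for simples $T\subseteq \soc\cok f$. Second, if $M$ is indecomposable non-projective, the almost split sequence $0\to \tau M\to E\to M\to 0$ induces a minimal projective resolution
\[
0\to\Hom(-,\tau M)\to\Hom(-,E)\to\Hom(-,M)\to S_M\to 0.
\]
An embedding $S_M\hookrightarrow\cok(-,f)$ then corresponds, via comparison with the defining presentation of $\cok(-,f)$ and the Yoneda lemma, to a morphism $h:M\to Y$ which does not factor through $f$ but such that the composition $E\to M\xrightarrow{h}Y$ does factor. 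A diagram chase then identifies this data with exhibiting $\tau M$ as an indecomposable summand of the intrinsic kernel $\widetilde{\Ker f}$. Combining these two cases yields $\mathcal D\subseteq \add(\tau^-\widetilde{\Ker f}\oplus P(\soc\cok f))$, proving (1).

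For (2), under the hereditary hypothesis one must prove the reverse inclusion: every indecomposable summand of $\tau^-\widetilde{\Ker f}$ and every $P(T)$ for $T\subseteq \soc\cok f$ truly contributes a simple subfunctor of $\cok(-,f)$. Hereditariness enters because $\Ext^2$ vanishes, which prevents the candidate morphisms $h:M\to Y$ constructed from summands of $\widetilde{\Ker f}$ from being unexpectedly absorbed into $\im\Hom(-,f)$; equivalently, the element detecting a given summand actually survives in $\cok(-,f)(M)$ and generates a simple subfunctor. The main obstacle in the whole argument will be the identification in the non-projective case above, where the bijection between simple subfunctors of $\cok(-,f)$ and summands of $\widetilde{\Ker f}$ (up to $\tau$) must be pinned down precisely; care with Krull-Schmidt decomposition and with the right minimality of $f$ is needed to avoid overcounting or producing ghost summands, and for (2) the vanishing of $\Ext^2$ is precisely what rules out the extra absorption phenomena that can occur in higher global dimension.
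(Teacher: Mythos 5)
First, a caveat: the paper does not prove this theorem --- it is quoted from \cite{ARS} and \cite{R2} --- but the machinery you invoke (essential socle of $\cok(-,f)$, the dictionary between simple subfunctors $(-,M)/\rad(-,M)\hookrightarrow\cok(-,f)$ and indecomposables $M$ that almost factor through $f$, and the projective/non-projective dichotomy for such $M$) is exactly what the paper develops in Sections 4--5 for $\Hom$-finite hereditary abelian categories, so your architecture is the right one. Part (1) does go through along the lines you indicate: after replacing $f$ by its right minimal version, the non-projective case reduces to the pullback/left-almost-split argument of Lemma \ref{ker|}, and the projective case to the observation that $\Top P$ embeds in $\cok f$. (Your justification of the essential-socle claim is too quick, though: finite length of the values $F(M)$ does not by itself yield a simple subfunctor; you need that finitely generated subfunctors of finitely presented functors are finitely presented, together with Auslander's theorem that finitely presented functors on $\modd\Lambda$ have finite length.)

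The genuine gap is in part (2), where you misplace the role of the hereditary hypothesis. The inclusion of $\tau^-\widetilde{\Ker f}$ into the minimal right determiner holds for an \emph{arbitrary} artin algebra and needs no vanishing of $\Ext^2$: for $f$ right minimal and $L$ an indecomposable summand of $\Ker f$ with almost split sequence $0\to L\to M\to N\to 0$, the inclusion $L\to X$ is not a split monomorphism, hence factors through the left almost split map $L\to M$, producing $w\colon N\to Y$ which by Lemma \ref{pullback} does not factor through $f$, while every radical map into $N$ factors through $M\to N$ and so composes with $w$ to something factoring through $f$. Where hereditariness is genuinely needed is the \emph{projective} part: for a simple $S\subseteq\soc\cok f$, the lift $\alpha\colon P(S)\to Y$ of $P(S)\to S\hookrightarrow\cok f$ satisfies only that $\alpha$ restricted to $\rad P(S)$ factors through $\Ima f$; to conclude that it factors through $f$ itself one must lift through the epimorphism $X\twoheadrightarrow\Ima f$, and this is where projectivity of $\rad P(S)$ --- i.e.\ hereditariness --- enters. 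The paper's Nakayama-algebra example, where the minimal right determiner is $\tau^-\Ker f$ alone and the $P(\soc\cok f)$ summand genuinely fails to almost factor through $f$, exhibits precisely this failure mode for non-hereditary algebras; your sketch of (2), which locates the obstruction in the $\tau^-$ summands, would therefore not close the actual gap.
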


The following Lemma shows that every indecomposable object in the minimal right determiner $\mathcal D$ is necessary in checking which morphisms factor through $f$.

\begin{lem}\label{in mrd}
Let $\mathcal C$ be an additive category. Let $f:X\rightarrow Y$ be a morphism in $\mathcal C$ which has a minimal right determiner $\mathcal D$. Let $Z$ be a non-zero indecomposable object in $\mathcal D$. Then there exists a morphism $g:T\rightarrow Y$ which cannot factor through $f$ satisfying the following conditions:
\begin{enumerate}
\item There is a morphism $\psi:Z\rightarrow T$ such that $g\psi$ cannot factor through $f$.
\item For any indecomposable object $Z^\prime\in \mathcal{D}$, $Z'\not\simeq Z$ and any morphism $\varphi:Z^\prime\rightarrow T$, the composition $g\varphi$ can factor through $f$.
\end{enumerate}
\end{lem}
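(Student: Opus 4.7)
The plan is to exploit minimality of $\mathcal D$ by exhibiting a strictly smaller subcategory $\mathcal D'$ that fails to determine $f$, and then read off the witness morphism $g$ from that failure.

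Concretely, I would set $\mathcal D'=\add\{W\in\mathcal D \mid W \text{ is indecomposable and } W\not\simeq Z\}$, the full additive subcategory of $\mathcal D$ generated by all indecomposables of $\mathcal D$ other than $Z$. Since $Z$ is a nonzero indecomposable, $Z\notin \mathcal D'$, so $\mathcal D'\subsetneq \mathcal D$. By minimality of $\mathcal D$, the subcategory $\mathcal D'$ is not a right determiner of $f$. Applying Corollary \ref{contrapositive2} in its contrapositive form to $\mathcal D'$: the failure of $\mathcal D'$ to determine $f$ delivers a morphism $g:T\to Y$ which does not factor through $f$, while for every indecomposable $Z'\in \mathcal D'$ and every $\varphi:Z'\to T$ the composite $g\varphi$ does factor through $f$. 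Since the indecomposables in $\mathcal D'$ are precisely those indecomposables of $\mathcal D$ not isomorphic to $Z$, this is exactly statement~(2) of the lemma.

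For statement~(1), I would apply Corollary \ref{contrapositive2} a second time, but now to $\mathcal D$ itself, which by hypothesis \emph{is} a right determiner of $f$. Since $g:T\to Y$ does not factor through $f$, there must exist an indecomposable $Z^{\ast}\in \mathcal D$ and a morphism $\psi:Z^{\ast}\to T$ such that $g\psi$ does not factor through $f$. Combined with property~(2) already established, $Z^{\ast}$ cannot lie in $\mathcal D'$, so $Z^{\ast}\simeq Z$; composing $\psi$ with a chosen isomorphism $Z\to Z^{\ast}$ yields the required $\psi:Z\to T$.

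The only subtle point in this strategy is securing the strict inclusion $\mathcal D'\subsetneq \mathcal D$: this requires enough uniqueness of direct sum decomposition to guarantee that $Z$ is not a summand of any finite sum of indecomposables not isomorphic to $Z$. In the Krull-Schmidt setting in which the paper operates this is automatic, so once that is observed the argument reduces to two applications of the contrapositive characterization in Corollary \ref{contrapositive2} — one to detect the failure of $\mathcal D'$, and one to diagnose that $Z$ itself is the obstruction.
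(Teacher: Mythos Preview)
Your argument is correct and is essentially the same as the paper's: both remove $Z$ to form $\mathcal D'$, use minimality of $\mathcal D$ to see that $\mathcal D'$ fails to right determine $f$, extract the witness $g$ from that failure (giving (2)), and then use that $\mathcal D$ itself does right determine $f$ to force the obstructing indecomposable to be $Z$ (giving (1)). The paper phrases the first step as a contradiction (``suppose (2) fails for every $g$'') while you argue directly, but the content is identical. Your caution about needing Krull--Schmidt to secure $\mathcal D'\subsetneq\mathcal D$ is a fair technical point; the paper's own proof tacitly makes the same assumption.
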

\begin{proof} Let $\mathcal A=\{g:T\rightarrow Y\mid\ g \text{\ cannot\  factor\  through\ } f \}$. Notice that $f$ is not an isomorphism (otherwise $\mathcal {D}={\bf 0}$). Therefore $\mathcal A$ is not empty.

First, suppose $\forall g\in\mathcal A$ condition (2) fails. Then consider the full subcategory $\mathcal{D}^\prime\subsetneq \mathcal{D}$ consisting of objects $Z^\prime\in \mathcal{D}$ which have no direct summands isomorphic to $Z$. By Corollary \ref{contrapositive2}, $f$ is right determined by $\mathcal{D}^\prime$, which is a contradiction to the minimality of $\mathcal D$.

Therefore there exists a morphism $g:T\rightarrow Y$ such that (2) holds. Then because of the fact that $\mathcal D$ is a right determiner of $f$, $g$ must satisfy condition (1). 
\end{proof}

%
%
We say a morphism $f$ has a {\bf unique minimal right determiner} if both $\mathcal D$ and $\mathcal{D}^\prime$ being minimal right determiners of $f$ implies $\mathcal {D}=\mathcal {D}^\prime$.

Notice that $\bf 0$ is always the unique minimal right determiner of a split epimorphism.

In the category of finitely generated modules over an artin algebra, it can be shown that the minimal right determiner of a morphism is always unique. 
However, it is not true in general. See Example \ref{1221}.      

\vskip10pt
Let $\mathcal S$ be the collection of all the right determiners of $f$. Then $\mathcal S$ is a poset with the partial order defined by the inclusion.
 A minimal element in $\mathcal S$ is a minimal right determiner of $f$. The minimum element in $\mathcal S$, if it exists, is the unique minimal right determiner of $f$.
\begin{lem}\label{poset intersection}
Let $\mathcal S$ be the poset of all the right determiners of $f$. If $\mathcal S$ is closed under arbitary intersections, then $f$ has a unique minimal right determiner. 
\end{lem}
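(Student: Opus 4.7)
The plan is to exhibit the minimal right determiner explicitly as the intersection of all right determiners. Set
\[
\mathcal{D}_0 \;:=\; \bigcap_{\mathcal{D}\in\mathcal{S}} \mathcal{D}.
\]
First I would note that $\mathcal{D}_0$ is a full subcategory closed under direct sums and summands, since both closure properties are preserved under arbitrary intersections. By the hypothesis that $\mathcal{S}$ is closed under arbitrary intersections, $\mathcal{D}_0\in\mathcal{S}$, i.e.\ $\mathcal{D}_0$ is itself a right determiner of $f$.

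Next, by construction $\mathcal{D}_0\subseteq \mathcal{D}$ for every $\mathcal{D}\in\mathcal{S}$, so $\mathcal{D}_0$ is the minimum element of the poset $\mathcal{S}$. In particular it is a minimal element, hence a minimal right determiner of $f$.

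For uniqueness, suppose $\mathcal{D}'$ is any minimal right determiner of $f$. Then $\mathcal{D}'\in\mathcal{S}$, so $\mathcal{D}_0\subseteq \mathcal{D}'$. Since $\mathcal{D}_0$ is itself a right determiner, the minimality of $\mathcal{D}'$ forces $\mathcal{D}_0=\mathcal{D}'$. Thus $f$ admits a unique minimal right determiner, namely $\mathcal{D}_0$.

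There is essentially no obstacle here beyond checking that the intersection of full subcategories closed under direct sums and summands is again of the same form, which is routine. The content of the lemma is entirely in the hypothesis: closure of $\mathcal{S}$ under arbitrary intersections is exactly what is needed to produce a minimum, and minima are automatically the unique minimal elements of a poset.
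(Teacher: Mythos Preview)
Your proof is correct and follows exactly the same approach as the paper: form $\mathcal{D}_0=\bigcap_{\mathcal{D}\in\mathcal{S}}\mathcal{D}$, observe that the closure hypothesis places it in $\mathcal{S}$, and conclude it is the minimum (hence unique minimal) element. The paper's proof is a single sentence stating this; you have simply spelled out the routine verifications in more detail.
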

\begin{proof} Because $\mathcal S$ is closed under intersections, $\mathop{\bigcap}\limits_{\mathcal D\in\mathcal S}\mathcal D$ will be the minimum element of $\mathcal S$.\end{proof}
Notice that the converse of Lemma \ref{poset intersection} is not true, see Example \ref{wedge close}.  
 

\section{A functorial approach}
In \cite{A}, the original concept of morphisms determined by objects was introduced in a functorial way. We will give a brief introduction as well as fix our notation in this section. One will find that instead of being abstract, it is the most efficient way of understanding the determined morphisms. Since most results are well-known, we will not give the proofs of them. A good reference is Auslander's original paper \cite{A}.

\subsection{Subfunctors determined by objects}
Recall that a category is {\bf preadditive} if $\Hom(X,Y)$ are abelian groups and the composition of morphisms $(X,Y)\times(Y,Z)\rightarrow (X,Z)$ given by $(f,g)\rightarrow gf$ is bilinear (we often use $(-,-)$ for $\Hom(-,-)$). An {\bf additive} category is a preadditive category admitting finite products and coproducts. If $\mathcal C$ and $\mathcal D$ are both preadditive, a covariant functor $F: \mathcal C\rightarrow \mathcal D$ is said to be {\bf additive} if the morphisms $F:(X,Y)\rightarrow (F(X),F(Y))$ are group homomorphisms. We assume our functors are all additive.

Denote $(\mathcal C,\mathcal D)$ to be the category of additive functors from category $\mathcal C$ to $\mathcal D$. In particular, if $\mathcal C$ is a preadditive category and $\mathcal D=Ab$ is the category of abelian groups, the category $(\mathcal C^{op}, Ab)$ of all the contravariant functors from $\mathcal C$ to $Ab$ is known to be an abelian category.

\vskip5pt
Let $\mathcal C$ be an additive category in this subsection.
The following Lemma is crucial to the construction of subfunctors.

\begin{lem}[\cite{A}, I 1.1]\label{func exist}
Let $G$ be a functor in $(\mathcal C^{op}, Ab)$. If for each $X$ in $\mathcal C$ we are given a subgroup $A_X$ of $G(X)$ such that $G(f)(A_Y)\subseteq A_X$ for all morphisms $f: X\rightarrow Y$ in $\mathcal C$, then there is a unique subfunctor $F$ of $G$ such that $F(X)=A_X$ for all $X$ in $\mathcal C$.
\end{lem}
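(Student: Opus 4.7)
The plan is to define $F$ directly by prescription on both objects and morphisms, verify that the given compatibility hypothesis makes the resulting assignment functorial, and then observe that the subfunctor condition forces this choice to be the only possible one. First I would set $F(X) := A_X$ on objects. For each morphism $f : X \to Y$ in $\mathcal C$, the hypothesis $G(f)(A_Y) \subseteq A_X$ allows me to define $F(f) : A_Y \to A_X$ as the corestriction to $A_X$ of the restricted group homomorphism $G(f)|_{A_Y}$; this is the unique map of sets making the square with the inclusions $A_Y \hookrightarrow G(Y)$ and $A_X \hookrightarrow G(X)$ commute, and it is a group homomorphism because $G(f)$ is.

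Next I would verify that $F$ is an additive contravariant functor. The identity and composition axioms reduce directly to those of $G$: since restricting a composition of group homomorphisms is the composition of their restrictions (and likewise for identities), we have $F(\mathrm{id}_X) = \mathrm{id}_{A_X}$ and $F(gf) = F(f) F(g)$. Additivity on hom-groups is inherited from $G$ for the same reason: for $a \in A_Y$, both $F(f + f')(a)$ and $(F(f) + F(f'))(a)$ equal $G(f)(a) + G(f')(a)$ in $A_X$. Exhibiting $F$ as a subfunctor of $G$ then amounts to taking the family of inclusions $\iota_X : A_X \hookrightarrow G(X)$ as the components of a natural transformation $F \Rightarrow G$; the naturality square for each $f$ commutes by the very definition of $F(f)$ as the restriction of $G(f)$.

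For uniqueness, suppose $F'$ is another subfunctor of $G$ with $F'(X) = A_X$ on objects, and let $\iota'_X$ denote its structural inclusions into $G$. Naturality of $\iota'$ at a morphism $f : X \to Y$ reads $\iota'_X \circ F'(f) = G(f) \circ \iota'_Y$. Since $\iota'_Y$ is the subgroup inclusion $A_Y \hookrightarrow G(Y)$ and $\iota'_X$ is injective, this equation forces $F'(f)$ to coincide with the corestriction of $G(f)|_{A_Y}$ to $A_X$, which is exactly $F(f)$. Hence $F' = F$.

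There is no real obstacle in this argument: the compatibility condition $G(f)(A_Y) \subseteq A_X$ is tailored precisely so that the action of $G$ on morphisms restricts to the prescribed subgroups, and the inclusions $A_X \hookrightarrow G(X)$ do all the work of both the functoriality check and the uniqueness check. The only point requiring any care is bookkeeping of contravariance when writing out the composition axiom, so that domains and codomains of the restricted maps match up correctly.
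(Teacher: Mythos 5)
Your proof is correct and complete: defining $F(f)$ as the corestriction of $G(f)|_{A_Y}$ to $A_X$, checking functoriality, additivity, and naturality of the inclusions, and deriving uniqueness from the fact that the structural inclusions of any subfunctor force its morphism action, is exactly the standard argument. The paper itself gives no proof for this lemma (it is quoted from Auslander's notes, I~1.1), so there is nothing to compare against; your write-up supplies the omitted details faithfully.
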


\subsection{A construction of subfunctor $G_H$}\label{construction} Let $G$ be a functor in $(\mathcal C^{op}, Ab)$, $X$ be an object of $\mathcal C$ and $H$ be an $\End(X)^{op}$-submodule of $G(X)$. Using the preceding Lemma, we can construct a subfunctor $G_H$ of $G$ in the following way such that $G_H$ is the maximal subfunctor of $G$ satisfying $G_H(X)=H$.
  For each $C\in\mathcal C$ define $A_C$ to be the subgroup of $G(C)$ consisting of all $x$ in $G(C)$ such that for each $f:X\rightarrow C$ the element $G(f)(x)$ in $G(X)$ is contained in $H$. It is easy to check that for each morphism $g: U\rightarrow V$ in $\mathcal C$ we have $G(g)(A_V)\subseteq A_U$. Thus by Lemma \ref{func exist}, there is a subfunctor, denoted by $G_H$ such that $G_H(C)=A_C$ for all $C\in \mathcal C$. Furthermore, since $H$ is an $\End(X)^{op}$-submodule of $G(X)$, $G_H(X)=H$. 

\vskip10pt
The following definition of ``subfunctors determined by objects'' is due to Auslander \cite{A}.
\begin{defn}\label{functor det obj def}
A subfunctor  $F$ of a functor $G:\mathcal C^{op}\rightarrow Ab$ is said to be {\bf determined by} $X\in Obj(\mathcal C)$ if a subfunctor $F^\prime$ of $G$ is contained in $F$ whenever $F^\prime(X)\subseteq F(X)$.
\end{defn}

\begin{rmk} It is worth mentioning the following easy observations:
\begin{enumerate}
\item If $F$ is a subfuctor of $G$ determined by $X$, then  $F(X)=G(X)$ implies $F=G$. 
\item A subfunctor $F$ of $G$ is determined by $X$ if and only if $F=G_{F(X)}$.
\end{enumerate}
\end{rmk}

The notion of ``subfunctors determined by objects'' can be easily generalized to ``subfunctors determined by subcategories'':

\begin{defn}
A subfunctor  $F$ of a functor $G:\mathcal C^{op}\rightarrow Ab$ is said to be {\bf determined by} subcategory $\mathcal D$ if a subfunctor $F^\prime$ of $G$ is contained in $F$ whenever $F^\prime(Z)\subseteq F(Z)$ for all $Z\in\mathcal D$.
\end{defn}

The notion of ``subfunctors determined by objects'' was originally used as the definition of morphisms determined by objects by Auslander \cite{A}. For the convenience of the readers, we show the equivalence of these two definitions in the generalized version for subcategories:
\begin{prop}\label{det fun}
 Let $f:B\rightarrow C$ be a morphism in $\mathcal C$ and $\mathcal D$ a full subcategory of $\mathcal C$. The morphism $f$ is right $\mathcal D$-determined if and only if the subfunctor $\Ima(-,f)$ of $(-,C)$ is determined by the subcategory $\mathcal D$.
\end{prop}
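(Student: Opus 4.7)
The plan is to unpack both conditions in terms of the subfunctor $F:=\Ima(-,f)$ of $G:=\Hom(-,C)$, using the tautology that for every object $X'$, the set $F(X')$ is exactly the collection of morphisms $X'\to C$ that factor through $f$. With this identification, ``$f'$ factors through $f$'' and ``$f'\in F(X')$'' become interchangeable, and the proof reduces to a translation between the two quantifier patterns in the definitions of $\mathcal{D}$-determination for morphisms and for subfunctors.

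For the ($\Rightarrow$) direction, I would assume $f$ is right $\mathcal{D}$-determined, take an arbitrary subfunctor $F'\subseteq G$ with $F'(Z)\subseteq F(Z)$ for all $Z\in\mathcal{D}$, and verify $F'\subseteq F$. Given any $X'$ and any $f'\in F'(X')$, for each morphism $g:Z\to X'$ with $Z\in\mathcal{D}$ the functoriality of $F'$ inside $G$ forces $f'g=F'(g)(f')\in F'(Z)\subseteq F(Z)$, so $f'g$ factors through $f$. The right $\mathcal{D}$-determination of $f$ (via Proposition \ref{contrapositive}) then upgrades this to $f'\in F(X')$, as required.

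For the ($\Leftarrow$) direction, the one thing that needs to be picked is the subfunctor that will play the role of $F'$ in the determination hypothesis. Given $f':X'\to C$ such that $f'g$ factors through $f$ for every $g:Z\to X'$ with $Z\in\mathcal{D}$, I would set $F':=\Ima\Hom(-,f')$, a subfunctor of $G$ whose value on each $Z$ is $F'(Z)=\{f'\circ g\mid g\in\Hom(Z,X')\}$. The hypothesis on $f'$ then reads precisely $F'(Z)\subseteq F(Z)$ for all $Z\in\mathcal{D}$, so the assumption that $F$ is determined by $\mathcal{D}$ gives $F'\subseteq F$. Evaluating at $X'$ with $g=\mathrm{id}_{X'}$ yields $f'\in F'(X')\subseteq F(X')$, so $f'$ factors through $f$.

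There is no real obstacle here; the argument is essentially bookkeeping between two definitions. The only substantive choice is recognizing that the representable-type subfunctor $\Ima\Hom(-,f')$ is the correct witness in the reverse direction, since it packages the ``every $f'g$ factors through $f$'' hypothesis into exactly the shape the subfunctor-determination criterion consumes. One should also remark that the equivalence is tautological in the full generality of Definition \ref{mdo}, so no Krull--Schmidt or Hom-finiteness assumptions are needed.
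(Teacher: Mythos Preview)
Your proof is correct and follows essentially the same approach as the paper: both directions use the subfunctor $\Ima(-,f')$ as the witness in the $(\Leftarrow)$ direction and unpack an arbitrary subfunctor elementwise in the $(\Rightarrow)$ direction. The only cosmetic difference is that the paper phrases the final step of $(\Leftarrow)$ via a commutative diagram and the Yoneda lemma, whereas you obtain $f'\in F(X')$ directly by evaluating at $\mathrm{id}_{X'}$; these are the same observation.
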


\begin{proof}

``if part'': Suppose a given morphism $f^\prime: A\rightarrow C$ satisfies: for any $Z\in \mathcal D$ and morphism $g: Z\rightarrow A$ the composition $f^\prime g$ factors through $f$. So $\Ima(Z,f^\prime)\subseteq\Ima(Z,f)$ for all $Z\in\mathcal D$. By our assumption, it follows that $\Ima(-,f^\prime)\subseteq\Ima(-,f)$. Therefore, we have the following commutative diagram.
$$
\xymatrix{(-,A)\ar@{->>}[r]\ar@{-->}[d]^\alpha &\Ima(-,f^\prime)\ar@{^(->}[d]\ar@{^(->}[r]&(-,C)\ar@{=}[d]\\
          (-,B)\ar@{->>}[r]&\Ima(-,f)\ar@{^(->}[r]&(-,C)}
$$
By Yoneda lemma, $\alpha$ is given by $(-,h)$ for some $h:A\rightarrow B$, and hence $f^\prime=fh$.

``only if part'': Suppose $G$ is a subfunctor of $(-,C)$ and $G(Z)\subseteq \Ima(Z,f)$ for all $Z\in\mathcal D$.  We want to show that $G\subseteq \Ima(-,f)$.  We just need to prove that $G(B')\subseteq \Ima(B',f)$ for all $B'\in\mathcal C$.
Let $f^\prime\in G(B')\subseteq\Hom(B',C)$. Since for any $Z\in\mathcal D$ and any morphism $g:Z\rightarrow B'$, the composition $f^\prime g\in G(Z)\subseteq\Ima(Z,f)$. So $f'g$ factors through $f$. Since $f$ is right determined by $\mathcal D$, it follows that $f^\prime$ factors through $f$. So $f^\prime\in \Ima(B',f)$ and hence $G\subseteq \Ima(-f)$.
\end{proof}

\subsection{Simple functors}

In this subsection, let $\mathcal C$ be an additive category. We are going to recall some basics about simple functors. Recall that a functor $F$ in $(\mathcal C^{op}, Ab)$ is said to be a {\bf simple functor} if a) $F\neq 0$ and b) 0 and $F$ are the only subfunctors of $F$. A functor $F$ is called {\bf semisimple} if $F$ is a direct sum of simple functors.

Let $\{G_i\}_{i\in I}$ be a family of subfunctors of $G$ in $(\mathcal C^{op}, Ab)$. For each $C$ in $\mathcal C$ define $A_C=\mathop{\bigcap}\limits_{i\in I}G_i(C)$ which is a subgroup of $G(C)$. Since each $G_i$ is a functor, it follows that for each morphism $f:X\rightarrow Y$ we have $G(f)(A_Y)\subseteq A_X$. Thus, by Lemma \ref{func exist} there is a unique subfunctor $F$ of $G$ such that $F(C)=A_C$ for all $C\in\mathcal C$. Define $\mathop{\bigcap}\limits_{i\in I}G_i:=F$ and call it the {\bf intersection} of the $G_i$.
\vskip5pt

A subfuctor $F$ of $G$ in $(\mathcal C^{op}, Ab)$ is called {\bf essential} if for any non-zero subfunctor $H\subseteq G$, it follows that $F\cap H\neq0$.

Equivalently, $F$ is an essential subfunctor of $G$ if and only if for any non-zero subfunctor $H \subseteq G$ there is a non-zero subfunctor $F^\prime\subseteq F$ such that $F^\prime$ is also a subfunctor of $H$.
\vskip5pt

The {\bf socle} of $G$ is the maximal semisimple subfunctor of $G$ in $(C^{op}, Ab)$, denoted by $\soc G$. By convention, $\soc G = 0$ if $G$ has no simple subfunctor. We say $\soc G$ is essential if it is an essential subfunctor of $G$. Then we have the following easy observation.

\begin{prop}\label{ess equiv}
The subfunctor $\soc G$ of $G$ is essential if and only if every non-zero subfunctor $H\subset G$ has a simple subfunctor.
\end{prop}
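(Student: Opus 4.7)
The plan is to prove the two implications separately; the converse is essentially immediate from the definition of $\soc G$ as the maximal semisimple subfunctor, while the forward direction is where the real content lies. For the $(\Leftarrow)$ direction, suppose every nonzero subfunctor of $G$ admits a simple subfunctor, and let $H\subseteq G$ be nonzero. The hypothesis gives a simple subfunctor $S\subseteq H$; being simple, $S$ is semisimple, hence contained in the maximal semisimple subfunctor $\soc G$. So $S\subseteq \soc G\cap H$, and therefore $\soc G\cap H\neq 0$, proving $\soc G$ is essential.

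For the $(\Rightarrow)$ direction, assume $\soc G$ is essential and let $H\subseteq G$ be any nonzero subfunctor. Then $K:=\soc G\cap H$ is a nonzero subfunctor of $\soc G$, and it suffices to produce a simple subfunctor inside $K$, which will automatically be a simple subfunctor of $H$. The whole argument thus reduces to the auxiliary claim that every nonzero subfunctor of a semisimple functor in $(\mathcal C^{op}, Ab)$ contains a simple subfunctor; this is the main (mild) obstacle of the proof.

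To establish this auxiliary claim, first decompose $\soc G$: a standard Zorn's lemma argument on maximal families of simple subfunctors whose pairwise sums are direct produces an isomorphism $\soc G\cong \bigoplus_{i\in I} S_i$ with each $S_i$ simple. Now fix a nonzero subfunctor $K\subseteq \bigoplus_i S_i$, pick $X\in\mathcal C$ and a nonzero element $x\in K(X)\subseteq (\bigoplus_i S_i)(X)=\bigoplus_i S_i(X)$, using that direct sums in $(\mathcal C^{op}, Ab)$ are computed objectwise. The element $x$ has finite support, say in indices $i_1,\dots,i_n$, so $K':=K\cap (S_{i_1}\oplus\cdots\oplus S_{i_n})$ is a nonzero subfunctor. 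A short induction on $n$ finishes the argument: if $K'\cap S_{i_1}\neq 0$, then by simplicity of $S_{i_1}$ it equals $S_{i_1}$ and we are done; otherwise the projection $S_{i_1}\oplus\cdots\oplus S_{i_n}\to S_{i_2}\oplus\cdots\oplus S_{i_n}$ restricts to an injection on $K'$ with nonzero image, and the inductive hypothesis applied to that image yields a simple subfunctor whose preimage in $K'\subseteq K$ is the desired simple subfunctor of $K$.
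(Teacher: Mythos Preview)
Your proof is correct and follows the same logical structure as the paper's: both directions hinge on the observation that a nonzero subfunctor of the semisimple functor $\soc G$ contains a simple subfunctor. The paper simply asserts this step (writing ``Since $F'\subseteq\soc G$ is semisimple, there is a simple subfunctor $S\subseteq F'$''), whereas you supply a careful argument via finite support in the objectwise direct sum and induction on the number of summands; this extra detail is welcome but does not constitute a different approach.
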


\begin{proof} Suppose $\soc G$ is essential, then for any non-zero subfunctor $H$, there is a subfunctor $F^\prime\subseteq \soc G$ such that, $F^\prime\subseteq H$. Since $F^\prime \subseteq\soc G$  is semisimple, there is a simple subfunctor $S\subseteq F^\prime$. Hence $S\subseteq H$.

Conversely, let $H\subseteq G$ be a non-zero subfunctor and assume $H$ has a simple subfunctor $S$. Since any simple subfunctor $S$ is a direct summand of $\soc G$, this implies that $S\subseteq H\cap \soc G$. So $\soc G$ is essential.   \end{proof} 

The following Lemma says that every simple functor is finitely generated.
\begin{lem}[\cite{A}, II 1.7]\label{simple fun}
Let $S$ be a simple functor in $(\mathcal C^{op}, Ab)$. Then $S(C)\neq 0$ for some $C\in \mathcal C$ and $S\simeq (-,C)/(-,C)_H$ where $H$ is a maximal right ideal of $\End(C)$.
\end{lem}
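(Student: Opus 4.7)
The plan is to use the Yoneda lemma to realize $S$ as a quotient of a representable functor, and then identify the kernel of the resulting surjection with the functor $(-,C)_H$ from Section \ref{construction}.

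Since $S \neq 0$, some $C \in \mathcal C$ has $S(C) \neq 0$; pick a non-zero element $s \in S(C)$. By the Yoneda lemma, $s$ corresponds to a natural transformation $\eta_s \colon (-,C) \to S$ given on $X$ by $\eta_s(X)(g) = S(g)(s)$ for $g \colon X \to C$. Its image is a non-zero subfunctor of $S$ (because $\eta_s(C)(\mathrm{id}_C) = s \neq 0$), so simplicity of $S$ forces $\eta_s$ to be surjective, and $S \simeq (-,C)/F$, where $F := \Ker \eta_s$.

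Next, set $H := F(C) = \{g \in \End(C) : S(g)(s) = 0\}$. Using contravariance $S(gh) = S(h)S(g)$, one sees that $H$ is closed under right multiplication by $\End(C)$, hence is a right ideal; it is proper because $\mathrm{id}_C \notin H$. Observe that $F$ is a maximal proper subfunctor of $(-,C)$: any strictly larger subfunctor would descend to a non-zero subfunctor of $S \simeq (-,C)/F$, which simplicity would force to be all of $S$. By the construction in Section \ref{construction}, $(-,C)_H$ is the maximal subfunctor of $(-,C)$ whose value at $C$ is contained in $H$, so $F \subseteq (-,C)_H \subsetneq (-,C)$, the second inclusion being strict because $H$ is proper. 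The maximality of $F$ then forces $F = (-,C)_H$, giving $S \simeq (-,C)/(-,C)_H$.

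Finally, to see that $H$ is a \emph{maximal} right ideal, suppose $H \subsetneq H' \subseteq \End(C)$ with $H'$ a right ideal. Then $(-,C)_{H'}$ is a subfunctor of $(-,C)$ with $(-,C)_{H'}(C) = H' \supsetneq H = F(C)$, so it strictly contains $F$; by maximality of $F$ we must have $(-,C)_{H'} = (-,C)$, whence $H' = \End(C)$. The main subtlety throughout is the identification $F = (-,C)_H$, which rests on balancing the simplicity of $S$ (which makes $F$ maximal) against the properness of $H$ (which keeps $(-,C)_H$ a strict subfunctor).
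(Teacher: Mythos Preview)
The paper does not prove this lemma; it merely cites it from Auslander's original paper \cite{A}, so there is no in-paper argument to compare against. Your proof is correct and follows the standard route one would expect: realize $S$ as a quotient of a representable via Yoneda, identify the kernel as a maximal subfunctor, and match it with the $(-,C)_H$ construction. The argument is clean, and your careful handling of the inclusion $F \subseteq (-,C)_H \subsetneq (-,C)$ together with the maximality of $F$ is exactly the point that makes the identification work.
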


The {\bf radical} of $G$ is the intersection of all the maximal subfunctors of $G$, denoted by $rG$ or $\rad G$.  In particular in an additive category, $\rad_{\mathcal C}(X,Y):=r(-,Y)(X)=\{h\in(X,Y) | 1_X-gh$ is invertible for any $g\in(Y,X)\}$.
 The following Lemma describes when simple functors have projective covers.
 
\begin{lem} [\cite{A}, II 1.9]
Let $\mathcal C$ be an additive category. Let $S$ be a simple functor in $(\mathcal C^{op},Ab)$ and $\alpha:(-,C)\rightarrow S$ a nonzero morphism. Then the following are equivalent.\\
$(a)$ $\alpha: (-,C)\rightarrow S$ is a projective cover.\\
$(b)$ $\End(C)$ is local.\\
$(c)$ $\Ker \alpha=r(-,C)$.
\end{lem}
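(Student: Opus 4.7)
The plan is to prove (a) $\Leftrightarrow$ (b) $\Leftrightarrow$ (c) by running the circuit (a) $\Rightarrow$ (c) $\Rightarrow$ (b) $\Rightarrow$ (a). The common tool is the Yoneda dictionary: $\alpha$ corresponds to the nonzero element $s_0 := \alpha_C(1_C) \in S(C)$, with $\alpha_D(h) = S(h)(s_0)$ for $h : D \to C$, so that $\Ker\alpha(C) = H := \{f \in \End(C) \mid S(f)(s_0) = 0\}$ is the right annihilator of $s_0$ in the right $\End(C)$-module $S(C)$. Two preparatory observations underlie everything. First, since $S$ is simple and $\alpha \neq 0$, $\alpha$ is an epimorphism; moreover, any proper nonzero right $\End(C)$-submodule $A \subseteq S(C)$ would, by the $G_H$ construction of \ref{construction} (relying on Lemma \ref{func exist}), yield a proper nonzero subfunctor $S_A \subseteq S$ with $S_A(C) = A$, contradicting simplicity of $S$; hence $S(C)$ is itself a simple right $\End(C)$-module and $H$ is a maximal right ideal. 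Second, a subfunctor $F' \subseteq (-,C)$ equals $(-,C)$ iff $1_C \in F'(C)$, since then $h = (-,C)(h)(1_C) \in F'(D)$ for every $h : D \to C$.

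For (a) $\Rightarrow$ (c): $\Ker\alpha$ is a maximal subfunctor of $(-,C)$ because its quotient $S$ is simple, so $r(-,C) \subseteq \Ker\alpha$ by definition of the radical. Conversely, as the kernel of a projective cover $\Ker\alpha$ is superfluous in $(-,C)$, and any superfluous subfunctor sits inside every maximal subfunctor (otherwise a maximal $M'$ with $M' + \Ker\alpha = (-,C)$ would be forced to equal $(-,C)$), whence $\Ker\alpha \subseteq r(-,C)$. For (c) $\Rightarrow$ (b): evaluating the hypothesis at $C$ gives $H = r(-,C)(C) = r\End(C)$, the last equality being the formula recalled in the excerpt; hence $\End(C)/r\End(C) \simeq \End(C)/H \simeq S(C)$ as right $\End(C)$-modules, and since $S(C)$ is a simple right $\End(C)$-module the ring $\End(C)/r\End(C)$ is simple as a right module over itself, hence a division ring, so $\End(C)$ is local. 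For (b) $\Rightarrow$ (a): $(-,C)$ is projective by Yoneda and $\alpha$ is an epimorphism, so it remains to show $\Ker\alpha$ is superfluous; given $F' + \Ker\alpha = (-,C)$, evaluate at $C$ to get $F'(C) + H = \End(C)$, and since $H \subsetneq \End(C)$ (else $1_C \in H$ would force $s_0 = S(1_C)(s_0) = 0$) and $\End(C)$ is local with unique maximal right ideal $r\End(C)$, one has $H \subseteq r\End(C)$; a proper $F'(C)$ would then also lie in $r\End(C)$, making the sum proper---contradiction. So $1_C \in F'(C)$ and $F' = (-,C)$.

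The hard part, I expect, is not any single implication but setting down the functor-to-ring dictionary cleanly: verifying that subfunctors of $(-,C)$ correspond bijectively to right ideals of $\End(C)$ via the $G_H$ construction of \ref{construction}, that this correspondence carries $r(-,C)$ to $r\End(C)$ and superfluous subfunctors into the Jacobson radical, and that $S(C)$ is genuinely simple as a right $\End(C)$-module. Once these identifications are in place, each arrow of the circuit reduces to a short module-theoretic step.
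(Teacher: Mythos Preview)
The paper does not supply its own proof of this lemma; it is quoted verbatim from Auslander's Philadelphia notes \cite[II~1.9]{A} as background, so there is no in-paper argument to compare against. Your proof is correct and self-contained: the Yoneda dictionary identifying $\Ker\alpha(C)$ with a maximal right ideal $H$ of $\End(C)$, the use of the $G_H$ construction to force $S(C)$ to be a simple right $\End(C)$-module, and the three implications all go through as written. One small point worth making explicit in (c)$\Rightarrow$(b): the isomorphism $\End(C)/H \simeq S(C)$ requires that the map $f \mapsto S(f)(s_0)$ be surjective, which follows because its image is a nonzero $\End(C)$-submodule of the simple module $S(C)$; you have the ingredients for this but do not quite say it.
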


\begin{cor}[\cite{A}, II 1.11] \label{simple fun 2}
Let $\mathcal C$ be an additive category. There is a bijection between

$\{$the isomorphism classes of objects in $\mathcal C$  with local endomorphism rings$\}\stackrel{\psi}\rightarrow $\\ \indent$\{$the isomorphism classes of simple functors in $(\mathcal C^{op}, Ab)$ having projective covers$\}$,

where $\psi(C)=(-,C)/r(-,C)$ for each $C\in\mathcal C$ with $\End(C)$ local.
 \end{cor}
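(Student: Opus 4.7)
The plan is to exhibit a two-sided inverse $\phi$ to $\psi$, using the preceding lemma (the characterization of when $(-,C)\to S$ is a projective cover in terms of the locality of $\End(C)$) as the main tool, together with Yoneda for bookkeeping.

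First I would verify that $\psi$ is well-defined, i.e.\ that $\psi(C)=(-,C)/r(-,C)$ is a simple functor and the canonical surjection $(-,C)\twoheadrightarrow\psi(C)$ is a projective cover. Since $\End(C)$ is local, $r\End(C)$ is a maximal right ideal of $\End(C)$, so by Lemma \ref{simple fun} the quotient $T:=(-,C)/(-,C)_{r\End(C)}$ is a simple functor. Since $T(C)\neq 0$, Yoneda furnishes a nonzero natural transformation $(-,C)\to T$, which must be surjective because $T$ is simple. The preceding lemma, implications (b) $\Rightarrow$ (a) and (b) $\Rightarrow$ (c), then shows that this surjection is a projective cover with kernel $r(-,C)$. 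Hence $T\cong (-,C)/r(-,C)=\psi(C)$, so $\psi(C)$ lies in the target set.

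Second I would define $\phi$ on isomorphism classes: given a simple functor $S$ admitting a projective cover $\alpha:(-,C)\to S$, set $\phi(S)=C$. The preceding lemma, (a) $\Rightarrow$ (b), guarantees $\End(C)$ is local, so $\phi(S)$ lies in the source set. For well-definedness, if $\alpha':(-,C')\to S'$ is another projective cover with $S\cong S'$, the standard uniqueness of projective covers in the abelian functor category $(\mathcal C^{op},Ab)$ yields an isomorphism $(-,C)\cong (-,C')$ compatible with $\alpha,\alpha'$; Yoneda then gives $C\cong C'$ in $\mathcal C$.

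Third I would check $\phi\circ\psi=\mathrm{id}$ and $\psi\circ\phi=\mathrm{id}$. The first is immediate from Step~1: the canonical surjection $(-,C)\twoheadrightarrow\psi(C)$ is a projective cover, so $\phi(\psi(C))=C$. For the second, given $S$ with projective cover $\alpha:(-,C)\to S$, the preceding lemma (a) $\Leftrightarrow$ (c) gives $\Ker\alpha=r(-,C)$, so $S\cong (-,C)/r(-,C)=\psi(\phi(S))$. The only subtle point is bridging the two descriptions of the simple functor at $C$, namely $(-,C)/(-,C)_{r\End(C)}$ from Lemma \ref{simple fun} and $(-,C)/r(-,C)$ from the definition of $\psi$; I would avoid a direct identification $(-,C)_{r\End(C)}=r(-,C)$ and instead let the preceding lemma pinpoint the kernel of the projective cover, keeping the whole argument a clean assembly of that lemma with Yoneda.
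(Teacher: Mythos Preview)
Your argument is correct and is precisely the natural deduction from the preceding lemma (the equivalence of (a) $\End(C)$ local, (b) $(-,C)\to S$ a projective cover, (c) $\Ker\alpha=r(-,C)$). The paper does not give its own proof of this statement: it is stated as a corollary with a citation to Auslander's original paper [A, II~1.11], so there is nothing to compare against beyond the implicit expectation that it follows from the lemma immediately before it, which your write-up confirms. One minor stylistic remark: in Step~1 you invoke Yoneda to produce a nonzero map $(-,C)\to T$, but $T$ is already \emph{defined} as a quotient of $(-,C)$, so the canonical quotient map is the obvious candidate for $\alpha$ and the Yoneda detour is unnecessary.
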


\vskip10pt

\subsection{Krull-Schmidt categories} Recall that an additive category $\mathcal C$ is called {\bf Krull-Schmidt} if any object $M\in\mathcal C$ has a unique (up to permutation) decomposition: $M=\mathop{\oplus}\limits_{i=1}^{n}M_i$ where $\End_{\mathcal C}(M_i)$ is a local ring for all $1\leq i\leq n$.

\vskip10pt
We will summarize some well-known facts about the Krull-Schmidt categories. 

An additive category has {\bf split idempotents}  if every idempotent endomorphism $\phi=\phi^2$ of an object $X$ splits, that is, there exists a factorization $X\stackrel{\iota} \rightarrow Y\stackrel{\pi}\rightarrow X$ of $\phi$ with  $\iota\pi=id_Y$. In particular, abelian categories have split idempotents.

A ring $R$ is called {\bf semi-perfect} if every finitely generated left (right) $R$-module has a projective cover. For example, local rings, left (right) artinian rings and finite dimensional $k$-algebras are semi-perfect.

\begin{prop} [\cite{K3}, 4.3]
An additive category is Krull-Schmidt if and only if it has split idempotents and the endomorphism ring of every object is semi-perfect.
\end{prop}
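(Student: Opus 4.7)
The plan is to establish the two directions separately, with $(\Leftarrow)$ being the more direct implication. Assume $\mathcal C$ has split idempotents and semi-perfect endomorphism rings, and fix $X \in \mathcal C$. Since $\End(X)$ is semi-perfect, the identity decomposes as $1_X = e_1 + \cdots + e_n$ into a complete set of primitive orthogonal idempotents whose corner rings $e_i \End(X) e_i$ are local. Splitting each $e_i$ categorically in $\mathcal C$ produces an isomorphism $X \cong X_1 \oplus \cdots \oplus X_n$ with $\End(X_i) \cong e_i \End(X) e_i$ local, and uniqueness of such a decomposition up to permutation is the classical Krull--Schmidt--Remak--Azumaya theorem for objects with local endomorphism rings.

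For $(\Rightarrow)$, assume $\mathcal C$ is Krull--Schmidt. To see that $\End(X)$ is semi-perfect, take a Krull--Schmidt decomposition $X = X_1 \oplus \cdots \oplus X_n$ with each $\End(X_i)$ local. The canonical projections $\varepsilon_i \in \End(X)$ form a complete set of orthogonal idempotents, and each is primitive since its corner ring $\varepsilon_i \End(X) \varepsilon_i \cong \End(X_i)$ is local. A ring possessing such a complete system of primitive orthogonal idempotents with local corners is semi-perfect by the standard ring-theoretic characterization (equivalently, $\End(X)/J(\End(X))$ is semisimple artinian and idempotents lift modulo the Jacobson radical).

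It remains to show that idempotents split. Given $\phi = \phi^2 \in \End(X)$, use the semi-perfectness of $\End(X)$ (just established) to decompose $\phi = f_1 + \cdots + f_m$ into primitive orthogonal idempotents. Each $(X, f_i)$ is an object of the Karoubi envelope $\widehat{\mathcal C}$ with local endomorphism ring $f_i \End(X) f_i$, so it is indecomposable in $\widehat{\mathcal C}$. Since $(X, f_i)$ is a direct summand of $X = \bigoplus_j X_j$ in $\widehat{\mathcal C}$, the Azumaya theorem forces $(X, f_i) \cong X_{\sigma(i)}$ for some index $\sigma(i)$, an object already living in $\mathcal C$. Unpacking this isomorphism yields morphisms $\iota_i : X_{\sigma(i)} \to X$ and $\pi_i : X \to X_{\sigma(i)}$ with $\iota_i \pi_i = f_i$ and $\pi_i \iota_i = 1_{X_{\sigma(i)}}$; assembling them produces a factorization $X \twoheadrightarrow Y \hookrightarrow X$ of $\phi$ in $\mathcal C$, witnessing that $\phi$ splits. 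The main obstacle is this last step: semi-perfectness alone only gives a ring-theoretic decomposition of $\phi$, and turning the primitive pieces into actual categorical summands requires the Krull--Schmidt hypothesis on $X$ together with Azumaya's uniqueness theorem to identify each $(X, f_i)$ with an object of $\mathcal C$.
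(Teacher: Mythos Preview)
The paper does not supply its own proof of this proposition: it is quoted verbatim from Krause \cite{K3} and used as a black box, so there is no argument in the paper to compare yours against.

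That said, your proof is correct and is essentially the standard one (and close to what appears in \cite{K3}). The $(\Leftarrow)$ direction and the semi-perfectness part of $(\Rightarrow)$ are routine; the only nontrivial point is the splitting of idempotents, and your use of the Karoubi envelope together with the Azumaya uniqueness theorem is exactly the right mechanism. One small remark: in the last step you do not actually need to first decompose $\phi$ into primitives. It suffices to observe that $(X,\phi)$ is a direct summand of $X$ in $\widehat{\mathcal C}$, so by Azumaya applied to the decomposition $X=\bigoplus_j X_j$ with local endomorphism rings, $(X,\phi)$ is isomorphic in $\widehat{\mathcal C}$ to a finite direct sum of some of the $X_j$, which already lies in $\mathcal C$; this yields the desired splitting of $\phi$ directly. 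Your route through the primitive pieces $f_i$ works too, but is slightly longer than necessary.
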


Let $k$ be a field. Recall that an additive $k$-category is {\bf Hom-finite} if the $\Hom$ spaces are of finite $k$-dimension.  Consequently, a $\Hom$-finite additive $k$-category is Krull-Schmidt if and only if it has split idempotents. In particular, a $\Hom$-finite abelian $k$-category is always Krull-Schmidt.

\begin{prop}[\cite{K2}, A.1]\label{ks proj cov}
For an essentially small (i.e. a category equivalent to a small category) additive category $\mathcal C$ with split idempotents the following are equivalent:\\
$(1)$ $\mathcal C$ is a Krull-Schmidt category.\\
$(2)$ Every finitely generated contravariant functor admits a projective cover.
\end{prop}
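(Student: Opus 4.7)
The plan is to prove the two implications separately, using Corollary \ref{simple fun 2} as the bridge between objects with local endomorphism ring and simple functors with projective covers, together with the following Nakayama-type observation: since $\mathcal C$ is essentially small, Zorn's lemma supplies a maximal subfunctor in every nonzero finitely generated functor, which forces $\rad F$ to be superfluous in any finitely generated $F$.

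For $(1)\Rightarrow(2)$: given a surjection $\pi_{0}\colon(-,X)\twoheadrightarrow F$, I would decompose $X=\bigoplus_{i=1}^{n}X_{i}$ with each $\End(X_{i})$ local via Krull--Schmidt, so that $(-,X)/\rad(-,X)=\bigoplus_{i=1}^{n}(-,X_{i})/r(-,X_{i})$ is semisimple of finite length by Lemma \ref{simple fun}. Since $\pi_{0}$ maps radical into radical, $F/\rad F$ is then a semisimple quotient, again of finite length. Assembling projective covers of its simple summands via Corollary \ref{simple fun 2} gives a projective cover $(-,Y)\twoheadrightarrow F/\rad F$, and projectivity of $(-,Y)$ lifts this along $F\twoheadrightarrow F/\rad F$ to $\pi\colon(-,Y)\to F$. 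Then $\pi$ is surjective because $\im\pi+\rad F=F$ and $\rad F$ is superfluous, while $\ker\pi$ is superfluous in $(-,Y)$: any $K'$ with $K'+\ker\pi=(-,Y)$ maps onto $F/\rad F$ and therefore must equal $(-,Y)$ by superfluousness of the kernel of the projective cover of $F/\rad F$.

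For $(2)\Rightarrow(1)$: idempotents already split, so it suffices to decompose every $X$ as a finite sum of objects with local endomorphism ring. Set $T:=(-,X)/\rad(-,X)$. By (2), $T$ admits a projective cover $\pi\colon P\twoheadrightarrow T$; since finitely generated projectives in $(\mathcal C\op,Ab)$ are retracts of representables and idempotents split in $\mathcal C$, I may take $P=(-,Y)$. Lifting the quotient $(-,X)\twoheadrightarrow T$ through $\pi$ yields $\phi\colon(-,X)\to(-,Y)$, which is epi by superfluousness of $\ker\pi$; Yoneda gives $\phi=(-,f)$ with $f$ a split epimorphism, whence $X=Y\oplus X'$. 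Now $(-,X')$ is a direct summand of $(-,X)$ sitting inside $\rad(-,X)$, and the Nakayama observation forces $X'=0$, so $(-,X)\twoheadrightarrow T$ is itself a projective cover. Once $T$ is known to be semisimple of finite length, I decompose $T=\bigoplus_{k=1}^{n}S_{k}$; (2) and Corollary \ref{simple fun 2} supply for each $S_{k}$ an object $Z_{k}$ with $\End(Z_{k})$ local and projective cover $(-,Z_{k})$, and uniqueness of projective covers identifies $\bigoplus_{k}(-,Z_{k})$ with $(-,X)$, yielding $X\cong\bigoplus_{k}Z_{k}$ via Yoneda and the splitting of idempotents.

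The main obstacle is the semisimplicity (and finite length) of $T=(-,X)/\rad(-,X)$ in the second implication, since ``finitely generated plus zero radical'' does not in itself force semisimplicity in an abstract abelian category. My approach is to iterate the summand-extraction above: each simple quotient of $T$ (which exists by finite generation and Zorn) pulls back through the projective-cover machinery to split off a local-endomorphism summand $Z_{k}$ of $X$, giving $X=Z_{1}\oplus\cdots\oplus Z_{n}\oplus X^{(n)}$ at each stage. Termination is the crux: if the process did not stop, $(-,X)$ would surject onto arbitrarily long semisimple direct sums of pairwise non-isomorphic simple functors, contradicting the fact that $(-,X)\twoheadrightarrow T$ is the essentially unique projective cover of $T$ whose indecomposable summands are already accounted for by the finite Krull--Schmidt pieces produced so far.
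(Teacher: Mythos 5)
First, note that the paper does not prove this statement at all: it is quoted from Krause (\cite{K2}, A.1; see also \cite{K3}), so your proposal can only be judged on its own terms. Your direction $(1)\Rightarrow(2)$ is correct and is the standard argument: the Zorn/Nakayama observation that $\rad F$ is superfluous in a finitely generated $F$ is valid (a union of a chain of proper subfunctors of a finitely generated functor is proper), $F/\rad F$ is a quotient of the finite semisimple functor $(-,X)/\rad(-,X)$, and the lifting of the projective cover of $F/\rad F$ to a projective cover of $F$ goes through exactly as you say.

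The direction $(2)\Rightarrow(1)$ has a genuine gap, located precisely where you flag it. Splitting off one local summand $Z_1$ per simple quotient of $T=(-,X)/\rad(-,X)$ is fine, but your termination argument does not produce a contradiction. First, the simple quotients extracted at successive stages need not be pairwise non-isomorphic. More seriously, ``$(-,X)$ surjects onto arbitrarily long semisimple sums'' contradicts nothing by itself: for $\Lambda=\prod_{i\ge 1}\mathbb F_2$ the cyclic module $\Lambda$ has zero radical and surjects onto arbitrarily long sums of pairwise non-isomorphic simples, and the phrase ``indecomposable summands already accounted for by the finite Krull--Schmidt pieces produced so far'' presupposes the finite decomposition you are trying to establish. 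The standard repair (Bass's argument, transported to $(\mathcal C^{op},Ab)$) proves directly that $T$ is semisimple of finite length, after which your final paragraph works: for each maximal subfunctor $M\subseteq T$ with simple quotient $q\colon T\to S$ and projective cover $p\colon (-,Z_S)\to S$, lift $p$ to $h\colon(-,Z_S)\to T$; since any morphism carries radical into radical and $\rad T=0$, one gets $\Ker h=\rad(-,Z_S)$, so $\Ima h\simeq S$ is a \emph{simple subfunctor of $T$ not contained in $M$}. Hence $\soc T$ lies in no maximal subfunctor, and since $T$ is finitely generated every proper subfunctor lies in a maximal one, so $\soc T=T$; finite generation then forces $T$ to be a finite direct sum of simples. (Equivalently, one can follow Krause and reduce to showing that $\End(X)\cong\End((-,X))$ is semiperfect, using the equivalence between $\add(-,X)$ and projective $\End(X)$-modules.) Without one of these arguments the implication $(2)\Rightarrow(1)$ is not proved.
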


\begin{cor}\label{1-1}  If $\mathcal C$ is a Krull-Schmidt additive category, then each simple functor in $(\mathcal C^{op}, Ab)$ has a projective cover. Hence, there is a bijection between  indecomposable objects in $\mathcal C$ and all simple functors.
\end{cor}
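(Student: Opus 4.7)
The plan is to reduce this corollary to the two already-established facts in the excerpt: the structure of simple functors (Lemma \ref{simple fun}), and the characterization of Krull-Schmidt categories via the existence of projective covers of finitely generated functors (Proposition \ref{ks proj cov}). Once every simple functor is shown to admit a projective cover, the promised bijection will be immediate from Corollary \ref{simple fun 2}.

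First I would take an arbitrary simple functor $S$ in $(\mathcal C^{op}, Ab)$. By Lemma \ref{simple fun}, there exists an object $C\in\mathcal C$ and a maximal right ideal $H$ of $\End(C)$ such that $S\simeq (-,C)/(-,C)_H$. In particular, $S$ is a quotient of the representable functor $(-,C)$, hence $S$ is finitely generated. Since $\mathcal C$ is Krull-Schmidt, it has split idempotents, so Proposition \ref{ks proj cov} applies and produces a projective cover $P\to S$. This establishes the first claim of the corollary.

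Next I would derive the bijection. Corollary \ref{simple fun 2} already supplies a bijection $\psi$ between isomorphism classes of objects of $\mathcal C$ with local endomorphism ring and isomorphism classes of simple functors admitting a projective cover. By the definition of a Krull-Schmidt category recalled just above (each object decomposes uniquely into summands with local endomorphism rings), the indecomposable objects of $\mathcal C$ are exactly the objects with local endomorphism ring. Combined with the previous paragraph, where we showed that every simple functor admits a projective cover, $\psi$ becomes a bijection between isomorphism classes of indecomposable objects of $\mathcal C$ and isomorphism classes of all simple functors in $(\mathcal C^{op},Ab)$.

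There is no real obstacle here; the only mild subtlety is to make sure we are entitled to invoke Proposition \ref{ks proj cov}, which requires $\mathcal C$ to be essentially small and to have split idempotents. The essential smallness is part of the standing assumption made at the start of Section 2 that $\mathcal C$ is skeletally small, and split idempotents are built into the Krull-Schmidt hypothesis via \cite{K3}, 4.3 as cited above, so both prerequisites are satisfied.
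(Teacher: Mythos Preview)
Your proof is correct and follows essentially the same route as the paper: show that a simple functor is finitely generated via Lemma \ref{simple fun}, then apply Proposition \ref{ks proj cov} to obtain a projective cover, and finally read off the bijection from Corollary \ref{simple fun 2}. Your write-up is in fact more explicit than the paper's, since you spell out why the hypotheses of Proposition \ref{ks proj cov} hold and why the two sides of the bijection in Corollary \ref{simple fun 2} coincide with the desired ones.
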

\begin{proof}
Let $S$ be a simple functor. By Lemma \ref{simple fun}, $S$ is finitely generated and hence by Proposition \ref{ks proj cov}, it has a projective cover.
\end{proof}

Recall that a functor $F$ in an additive category $(\mathcal C^{op}, Ab)$ is called {\bf finitely generated} if there is an epimorphism: $(-,C)\rightarrow F$ for some $C\in\mathcal C$. A functor $F$ is called {\bf finitely presented} if there is an exact sequence $(-,B)\rightarrow (-,C)\rightarrow F\rightarrow 0$ for some $B,C\in\mathcal C$. It also follows that if $\mathcal C$ is a Krull-Schmidt additive category, then every finitely presented functor $F$ in $(\mathcal C^{op}, Ab)$ has a minimal projective presentation:
$$
(-,M)\rightarrow(-,N)\rightarrow F\rightarrow 0.
$$

If $\mathcal C$ is an abelian category, finitely presented functors have this well-known locally coherent property:

\begin{lem}\label{coh}
Let $\mathcal C$ be an abelian category and suppose $F$ in $(\mathcal C^{op}, Ab)$ is finitely presented. If $G$ is a finitely generated subfunctor of $F$, then $G$ is finitely presented.
\end{lem}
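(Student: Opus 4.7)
The plan is to carry out the standard coherence argument, exploiting the fact that representables are projective in $(\mathcal C\op,Ab)$ together with the existence of pullbacks in the abelian category $\mathcal C$. First, I would fix a finite presentation
$$(-,B)\xrightarrow{(-,g)}(-,C)\xrightarrow{\pi}F\to 0$$
of $F$, and an epimorphism $\beta:(-,A)\twoheadrightarrow G$ witnessing that $G$ is finitely generated. Composing with the inclusion $i:G\hookrightarrow F$ gives $i\beta:(-,A)\to F$, and since $(-,A)$ is projective (Yoneda) and $\pi$ is epi, this lifts to a morphism $\tilde\beta:(-,A)\to(-,C)$ with $\pi\tilde\beta=i\beta$. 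By the Yoneda lemma, $\tilde\beta=(-,\alpha)$ for a unique $\alpha:A\to C$ in $\mathcal C$.

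Next I would use that $\mathcal C$ is abelian to form the pullback
$$
\xymatrix{
P\ar[r]^{p_1}\ar[d]_{p_2} & A\ar[d]^{\alpha}\\
B\ar[r]^{g} & C
}
$$
in $\mathcal C$, and claim that
$$(-,P)\xrightarrow{(-,p_1)}(-,A)\xrightarrow{\beta}G\to 0$$
is exact, which directly exhibits $G$ as finitely presented since $(-,P)$ is representable. Right exactness is automatic from $\beta$ being an epimorphism, so the content is in identifying $\Ker\beta$ with $\Ima(-,p_1)$. Evaluating at an object $X\in\mathcal C$: for $f\in(X,A)$, $\beta(f)=0$ iff $\pi(\alpha f)=0$ in $F(X)$, iff $\alpha f\in\Ima(-,g)(X)$, iff there exists $h:X\to B$ with $\alpha f=gh$. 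By the universal property of the pullback this is precisely the condition that $f$ factor as $p_1 w$ for some $w:X\to P$, which is the statement $f\in\Ima(-,p_1)(X)$.

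There is no real obstacle; the main subtlety is merely to note where each hypothesis enters. Abelianness of $\mathcal C$ is used exactly once, to guarantee existence of the pullback $P$ (without this, the kernel of $\beta$ could fail to be finitely generated, even representably so). Finite presentation of $F$ is used to ensure the left-hand term $(-,B)$ is already representable, so that the pullback lives in $\mathcal C$ itself and its representable $(-,P)$ is finitely generated. Yoneda is used twice: once to lift $\beta$ to a map of representables induced by a morphism $\alpha$ in $\mathcal C$, and implicitly in translating the factorization condition into the universal property of the pullback in $\mathcal C$.
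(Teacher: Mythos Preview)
Your argument is correct and is precisely the standard coherence argument for functor categories over a category with pullbacks. The paper itself does not supply a proof of this lemma; it is stated as a ``well-known'' fact and invoked later without justification. So there is nothing to compare against, and your write-up would serve as a perfectly adequate proof in its place. The only minor comment is that your remark about where abelianness enters could be sharpened: you really only use that $\mathcal C$ has pullbacks (so that the lemma holds more generally, e.g.\ for categories with weak kernels), but since the paper assumes $\mathcal C$ abelian throughout, this is harmless.
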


It is well-known that in abelian category, an epimorphism $P\stackrel{f}\rightarrow X$ is a projective cover if and only if $P$ is projective and $f$ is right minimal (See Definition \ref{r minimal}; \cite{K3}, 3.4 ). Notice that it is equivalent to say that $f$ is an essential epimorphism (See \cite{ARS} I.4.1). So we have the following result about the existence of projective cover in Krull-Schmidt abelian categories.
 
\begin{prop}\label{s cover}
Suppose $\mathcal C$ is a Krull-Schmidt abelian category. If there is an epimorphism $\pi:P\rightarrow X$ in $\mathcal C$ where $P$ is projective, then $X$ has a projective cover.
\end{prop}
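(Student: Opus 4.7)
The plan is to produce the projective cover by extracting the right minimal version of $\pi$ inside the Krull--Schmidt category, and then to invoke the criterion cited just before the statement (namely, that a surjection from a projective object is a projective cover precisely when it is right minimal).

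First I would use the Krull--Schmidt property to write $P=\bigoplus_{i=1}^{n}P_i$ with each $P_i$ indecomposable, so that $\pi$ is encoded by the tuple $(\pi_1,\dots,\pi_n)$ with $\pi_i=\pi|_{P_i}$. Next I would construct the right minimal version of $\pi$ by iteratively splitting off direct summands of $P$ on which $\pi$ vanishes: using the alternative description of right minimality given after Definition~\ref{r minimal}, whenever $\pi$ is not right minimal there is a non-zero summand $P'\subseteq P$ with $\pi(P')=0$, and a complementary summand exists because $\mathcal{C}$ has split idempotents. Since each such step strictly decreases the (finite) number of indecomposable summands, the process terminates after finitely many iterations, producing a decomposition $P=P_1\oplus P_2$ and a factorization $\pi=(\pi_1,0)\colon P_1\oplus P_2\to X$ with $\pi_1\colon P_1\to X$ right minimal.

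The verifications are then routine. The object $P_1$ is a direct summand of the projective object $P$, hence is itself projective. The morphism $\pi_1$ is an epimorphism: indeed $\pi_1$ factors $\pi$ through the projection $P\onto P_1$, and since $\pi$ is already an epimorphism, so is $\pi_1$. Thus $\pi_1\colon P_1\to X$ is a right minimal epimorphism out of a projective object, and by the criterion recalled immediately before the statement of the proposition (that such morphisms in an abelian category are exactly the projective covers, as in \cite{K3}~3.4 together with \cite{ARS}~I.4.1), $\pi_1$ is a projective cover of $X$.

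I do not anticipate a serious obstacle: the only non-formal point is that the extraction of a right minimal version actually halts, and this is ensured by the finiteness of the indecomposable decomposition in a Krull--Schmidt category (combined with split idempotents, which are already part of the definition). No further abelian structure is needed beyond having the quoted characterization of projective covers as right minimal epimorphisms from projectives.
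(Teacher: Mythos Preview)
Your proof is correct and follows essentially the same approach as the paper: both extract the right minimal version of $\pi$ using the Krull--Schmidt decomposition of $P$, observe that the resulting morphism is still an epimorphism from a projective object, and conclude via the characterization of projective covers as right minimal epimorphisms from projectives. The paper is simply terser, asserting the existence of the right minimal version without detailing the iterative splitting process you describe.
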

\begin{proof} Since $\mathcal C$ is Krull-Schmidt, $P=\oplus_{i=1}^n P_i$, where $P_i$'s are indecomposable summands of $P$ with local endomorphism ring. Take $\tilde\phi:\tilde P\rightarrow X$ to be the right minimal version of $\phi$. Since $\Ima\phi=\Ima \tilde\phi=X$, $\tilde\phi$ is a right minimal epimorphism. Hence $P$ is the projective cover of $X$. \end{proof}

\subsection{Almost split sequences}\label{ars def}

We recall some facts about almost split sequences in additive categories. It could be regarded as a generalization of the classical Auslander-Reiten theory in module categories.

Let $\mathcal C$ be an arbitrary category and let $g:X\rightarrow Y$ be a morphism in $\mathcal C$. Then\\
(1) $g$ is said to be {\bf right minimal} if for any $h:X\rightarrow X$ satisfying $gh=g$, $h$ is an automorphism.\\
(2) $g$ is said to be {\bf right almost split} if (a) $g$ is not a split epimorphism. (b) If $f: Z\rightarrow Y$ is not a split epimorphism, there is a morphism $h: Z\rightarrow Y$ such that $gh=f$.\\
(3) $g$ is said to be {\bf minimal right almost split} if $g$ is both right minimal and right almost split.
\vskip5pt

For the convenience of the readers, we also give the dual definitions:\\
(1') $g$ is said to be {\bf left minimal} if for any $h:Y\rightarrow Y$ satisfying $hg=g$, $h$ is an automorphism.\\
(2') $g$ is said to be {\bf left almost split} if (a) $g$ is not a split monomorphism. (b) If $f: X\rightarrow Z$ is not a split monomorphism, there is a morphism $h: Y\rightarrow Z$ such that $hg=f$.\\
(3') $g$ is said to be {\bf minimal left almost split} if $g$ is both left minimal and left almost split.

The notion of right (left) almost split morphisms can be interpreted in terms of functors as the follows: 

\begin{prop} [\cite{A}, II 2.3]
Let $\mathcal C$ be an additive category and $f:B\rightarrow C$ be a morphism in $\mathcal C$. Then the following are equivalent:
\begin{enumerate}
\item $\Ima(-,f)$ is a maximal subfunctor of $(-,C)$ and $\End(C)$ is local.
\item $f$ is right almost split.
\end{enumerate}
\end{prop}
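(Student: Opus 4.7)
My plan is to exploit the exact sequence $(-,B) \xrightarrow{(-,f)} (-,C) \to \cok(-,f) \to 0$, which translates the factorization of morphisms into $\mathcal{C}$ into inclusion of subfunctors of $(-,C)$, and to handle the two directions separately.

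For $(1) \Rightarrow (2)$, I would first observe that maximality of $\Ima(-,f)$ makes it a proper subfunctor of $(-,C)$, so $1_C \notin \Ima(C,f)$ and hence $f$ is not a split epimorphism. Now take any $f': Z \to C$ that is not a split epimorphism, and suppose for contradiction that $f'$ does not factor through $f$. Then $f' \in \Ima(Z,f') \setminus \Ima(Z,f)$, so the subfunctor $G := \Ima(-,f) + \Ima(-,f')$ strictly contains $\Ima(-,f)$, and by maximality $G = (-,C)$. Evaluating at $C$ yields a decomposition $1_C = fh + f'g$ for some $h: C \to B$ and $g: C \to Z$. Since $\End(C)$ is local and $1_C$ is a unit, at least one of $fh$ and $f'g$ must be a unit; but $fh$ a unit makes $f$ a split epi (contradicting the above) and $f'g$ a unit makes $f'$ a split epi (contradicting the hypothesis on $f'$).

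For $(2) \Rightarrow (1)$, the maximality of $\Ima(-,f)$ is the clean half: if $G \supsetneq \Ima(-,f)$ is a subfunctor, any $g \in G(X) \setminus \Ima(X,f)$ does not factor through $f$, so by almost split $g$ is a split epi with a section $s: C \to X$; then $1_C = gs = G(s)(g) \in G(C)$, which forces $G = (-,C)$ by contravariance. For the locality of $\End(C)$, I would argue in two steps. First, $C$ is indecomposable: any nontrivial decomposition $C = C_1 \oplus C_2$ produces inclusions $i_k: C_k \to C$ which are not split epis, hence factor as $i_k = f\alpha_k$, so the projections $p_k = i_k q_k$ lie in $\Ima(C,f)$ and $1_C = p_1 + p_2 \in \Ima(C,f)$, contradicting $f$ not split epi. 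Second, with $C$ indecomposable (in the relevant split-idempotent setting), any split epi $\phi \in \End(C)$ with $\phi \psi = 1_C$ is forced to be an iso: if $\psi$ is not split epi then $\psi = f\gamma$ and $f\gamma\phi$ is an idempotent in $\End(C)$, necessarily trivial, both cases forcing $\phi$ iso. Consequently every non-iso in $\End(C)$ factors through $f$, so $\Ima(C,f)$ equals the set of non-units, which one further checks to be a two-sided ideal, hence $\End(C)$ is local.

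The main obstacle is the second half of $(2) \Rightarrow (1)$: the passage from the almost split property at $C$ to locality of $\End(C)$ subtly requires ruling out nontrivial idempotents in $\End(C)$, which is natural in a Krull-Schmidt or split-idempotent setting (which is where the rest of the paper operates) but is delicate in a purely additive category. An alternative functorial route is to note that maximality of $\Ima(-,f)$ makes $\cok(-,f)$ simple, invoke Lemma [A, II 1.7] to write $\cok(-,f) \cong (-,D)/(-,D)_H$, and then use Lemma [A, II 1.9] to conclude locality of $\End(D)$ and via a Yoneda-type identification conclude $D \cong C$; but this still hinges on verifying the canonical projection $(-,C) \to \cok(-,f)$ is a projective cover, which again boils down to its right minimality.
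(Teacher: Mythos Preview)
The paper does not supply its own proof of this proposition: it is quoted verbatim from Auslander's Philadelphia Notes as \cite[II 2.3]{A} and used as a black box. There is therefore nothing in the paper to compare your argument against.

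That said, let me comment on your proof itself. The direction $(1)\Rightarrow(2)$ and the maximality half of $(2)\Rightarrow(1)$ are both clean and correct. Your worry about the locality half, however, is self-inflicted: you take a detour through showing ``split epi $=$ isomorphism'' in $\End(C)$, which is exactly where idempotent-splitting sneaks in, and then you correctly flag that this step is delicate in a bare additive category. But the detour is unnecessary. You have already observed that $\Ima(C,f)$ is a proper right ideal of $\End(C)$ (functoriality plus $f$ not split epi) and that it equals precisely the set of non-split-epimorphisms in $\End(C)$ (any non-split-epi $g:C\to C$ factors through $f$ by the almost split property; conversely if $g=fh$ were split epi with $gs=1_C$ then $f(hs)=1_C$, contradicting that $f$ is not split epi). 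Now any proper right ideal $J\subsetneq\End(C)$ consists entirely of non-split-epis (a split epi $j\in J$ with $js=1_C$ forces $1_C\in J$), hence $J\subseteq\Ima(C,f)$. Thus $\Ima(C,f)$ is the unique maximal right ideal of $\End(C)$, and $\End(C)$ is local --- with no appeal to indecomposability, split idempotents, or projective covers. Your alternative route via Lemma~II~1.9 runs into the same right-minimality issue you note, and is likewise avoidable.
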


\begin{prop} [\cite{A}, II 2.7]
Let $\mathcal C$ be an additive category. Suppose $f:B\rightarrow C$ is a morphism in $\mathcal C$ such that $(-,B)\stackrel{(-,f)}\rightarrow (-,C)\rightarrow \cok(-,f)\rightarrow 0$ is exact and $\cok(-,f)$ is a simple functor. Then the following are equivalent:
\begin{enumerate}
\item $(-,B)\stackrel{(-,f)}\rightarrow (-,C)\rightarrow \cok(-,f)\rightarrow 0$ is a minimal projective presentation of $\cok(-,f)$.
\item $f$ is minimal right almost split.
\end{enumerate}
\end{prop}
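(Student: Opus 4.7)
The plan is to decouple the stated equivalence into an ``almost split'' half and a ``minimal'' half, reducing each to a result already recalled in the excerpt. Since $F := \cok(-,f)$ is simple by hypothesis, the subfunctor $\Ima(-,f) = \Ker((-,C) \to F)$ is automatically a maximal subfunctor of $(-,C)$; the immediately preceding proposition then collapses to the single criterion ``$f$ is right almost split $\iff \End(C)$ is local''. Combining this with the lemma characterising projective covers of simple functors ((a)$\iff$(b): local endomorphism ring $\iff (-,C)\to F$ is a projective cover), one obtains the equivalence ``$f$ is right almost split $\iff$ $(-,C)\to F$ is a projective cover''. This is exactly the ``$P_0$-cover'' half of a minimal projective presentation of $F$.

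It remains to show, independently of the almost split condition, that $f$ is right minimal if and only if the induced epimorphism $(-,B) \onto \Ima(-,f)$ is a projective cover (the ``$P_1$-cover'' half). I would prove both directions by contrapositive. If $f$ is not right minimal, write $B = B_1 \oplus B_2$ with $B_2 \neq 0$ and $f|_{B_2} = 0$; then $\Ima(-,f|_{B_1}) = \Ima(-,f)$, so the map $(-,B) \to \Ima(-,f)$ factors through the proper direct summand $(-,B_1)$ and fails to be essential. Conversely, assume $(-,B) \to \Ima(-,f)$ is not a projective cover; using that idempotents split in $(\mathcal{C}\op,Ab)$ and that direct summands of $(-,B)$ come from direct summands of $B$, pick a proper decomposition $B = B_1 \oplus B_2$, $B_2 \neq 0$, for which $(-,B_1) \onto \Ima(-,f)$ is still surjective. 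Evaluating this surjection at the object $B$ and lifting $f = f \circ 1_B \in \Ima(B,f)$ yields a morphism $\pi\colon B \to B_1$ with $f \iota_1 \pi = f$, where $\iota_1\colon B_1 \into B$ is the inclusion. Hence $e := \iota_1 \pi$ satisfies $fe = f$. If $f$ were right minimal, $e$ would be an automorphism, so $\sigma := \pi e^{-1}$ would satisfy $\iota_1 \sigma = 1_B$; combined with $p_2 \iota_1 = 0$ (for the projection $p_2\colon B \to B_2$) this forces $p_2 = 0$ and hence $B_2 = 0$, contradicting the choice.

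The main obstacle is this reverse direction of the second equivalence: translating the abstract failure of essential-epimorphism in the functor category back into a concrete direct-summand splitting of $B$ on which the corresponding component of $f$ is redundant. The key technical ingredients are the Yoneda-style extraction of $\pi\colon B \to B_1$ from surjectivity evaluated at the object $B$, together with the elementary observation that in an additive category a split monomorphism which is simultaneously a split epimorphism is forced to be an isomorphism.
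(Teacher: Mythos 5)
The paper offers no proof of this statement: it is quoted from Auslander's notes, and Section~3 explicitly omits proofs of these background facts, so your argument can only be judged on its own terms. Your architecture is the right one, and your first half is correct: since $F=\cok(-,f)$ is simple and nonzero, $\Ima(-,f)$ is automatically a maximal subfunctor of $(-,C)$, so the preceding proposition (II 2.3) collapses to ``$f$ is right almost split iff $\End(C)$ is local'', and the lemma on simple functors (II 1.9) identifies that with ``$(-,C)\onto F$ is a projective cover''. Splitting ``minimal projective presentation'' into the two cover conditions, on $(-,C)\onto F$ and on $(-,B)\onto\Ima(-,f)$, matching ``right almost split'' plus ``right minimal'', is exactly how the equivalence should be organized.

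The gap is in your second half, in both directions, and it is the same gap: you convert ``$f$ not right minimal'' and ``$(-,B)\onto\Ima(-,f)$ not a projective cover'' into the existence of a decomposition $B=B_1\oplus B_2$ with $B_2\neq 0$ killed by $f$. Neither conversion is valid in a bare additive category. For the forward direction: in the category of finitely generated free modules over a Dedekind domain with nontrivial class group, writing $R^2\cong P\oplus Q$ with $P$ non-principal, the idempotent $f=\iota_Q\pi_Q$ satisfies $ff=f$ with $f$ not an automorphism, so $f$ is not right minimal, yet no nonzero free direct summand of $R^2$ is killed by $f$ because the relevant idempotent does not split in the category. For the converse, failure of essentiality produces a proper \emph{subfunctor} of $(-,B)$ surjecting onto $\Ima(-,f)$, not a proper direct summand (that upgrade needs the existence of a projective cover of $\Ima(-,f)$), and even a genuine summand of $(-,B)$ need not be representable unless idempotents split in $\mathcal C$. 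Both directions are repaired by dropping decompositions entirely: by Yoneda, $g\mapsto (-,g)$ is a bijection between endomorphisms $g$ of $B$ with $fg=f$ and endomorphisms $\eta$ of $(-,B)$ with $p\eta=p$, where $p\colon(-,B)\onto\Ima(-,f)$, and it preserves and reflects automorphisms; hence $f$ is right minimal iff $p$ is right minimal, and for an epimorphism out of a projective in the abelian category $(\mathcal C^{op},Ab)$ right minimality is equivalent to being a projective cover --- the fact the paper quotes from \cite{K3}. Your decomposition arguments do become correct verbatim once $\mathcal C$ is Krull-Schmidt, which is the only setting in which the paper actually invokes this proposition.
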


We omit the dual versions of this proposition.
\vskip5pt

Let $\mathcal C$ be a full subcategory of an abelian category closed under extensions and summands and suppose $\Ext_{\mathcal C}^1(A,B)$ forms a set for all $A,B\in \mathcal C$. (The last condition holds in most situations of interest to us. For example, if $\mathcal C$ has enough projective objects or enough injective objects or is $\Ext$-finite.) 
An exact sequence $0\rightarrow A\stackrel{f}\rightarrow B\stackrel{g}\rightarrow C\rightarrow 0$ is said to be an {\bf almost split sequence} if $f$ is left almost split and $g$ is right almost split in $\mathcal C$.

We have the following characterization of almost split sequences in terms of functors:
\begin{prop}[\cite{A}, II 4.4]\label{ass1} Let $\mathcal C$ be a full subcategory of an abelian category closed under extensions and summands and also assume that $\Ext_{\mathcal C}^1(A,B)$ is a set for all $A,B\in \mathcal C$.
Let $0\rightarrow A\stackrel{f}\rightarrow B\stackrel{g}\rightarrow C\rightarrow 0$ be an exact sequence in $\mathcal C$. The following are equivalent:
\begin{enumerate}
\item  $0\rightarrow A\stackrel{f}\rightarrow B\stackrel{g}\rightarrow C\rightarrow 0$ is an almost split sequence.
\item $g$ is minimal right almost split.
\item [(2')] $f$ is minimal left almost split.
\item $g$ is right almost split and $\End(A)$ is local.
\item [(3')] $f$ is left almost split and $\End(C)$ is local.
\item $\cok(-,g)$ is a simple functor and $(-,B)\stackrel{(-,g)}\rightarrow (-,C)\rightarrow \cok(-,g)\rightarrow 0$ is a minimal projective presentation.
\item [(4')] $\cok(f,-)$ is a simple functor and $(B,-)\stackrel{(f,-)}\rightarrow (A,-)\rightarrow \cok(f,-)\rightarrow 0$ is a minimal projective presentation.
\end{enumerate}
 \end{prop}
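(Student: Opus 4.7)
The plan is to organize the seven-way equivalence around the two propositions quoted immediately above the statement (the functorial characterizations of right almost split morphisms and of minimal right almost split morphisms from \cite{A}). These dispatch most of the implications for free, leaving essentially two hand-arguments: verifying right-minimality under the hypothesis of (3), and a pushout computation that bridges (2) back to (1).

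By the minimal-right-almost-split criterion, $(2)\Leftrightarrow(4)$ is immediate, and dually $(2')\Leftrightarrow(4')$. For $(1)\Rightarrow(3)+(3')$, the right-almost-split criterion and its dual show that a right (respectively left) almost split morphism automatically has local endomorphism ring on its codomain (respectively domain), so an almost split sequence already has both $\End(A)$ and $\End(C)$ local, which combined with the two almost-split properties in (1) yields (3) and (3').

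Next I would verify $(3)\Rightarrow(2)$ directly. Given $h\colon B\to B$ with $gh=g$, the difference $1-h$ factors through the kernel of $g$, which is $f$ (up to the iso $\Ker g\cong A$), yielding $\sigma\colon B\to A$ with $1-h=f\sigma$ and hence $hf=f(1-\sigma f)$. Because $\End(A)$ is local, either $\sigma f$ or $1-\sigma f$ is a unit in $\End(A)$. In the first case $(\sigma f)^{-1}\sigma$ is a retraction of $f$, so the sequence splits and $g$ is a split epimorphism, contradicting that $g$ is right almost split. In the second case the triple $(1-\sigma f,\,h,\,1_C)$ is a ladder endomorphism of the exact sequence whose outer rungs are automorphisms, so the five lemma promotes $h$ to an automorphism. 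Dually $(3')\Rightarrow(2')$.

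The remaining step $(2)\Rightarrow(1)$ is the core of the argument; it suffices to show $f$ is left almost split, since then the dual of the right-almost-split criterion yields $\End(A)$ local and (1) follows. Suppose $h\colon A\to Z$ does not factor through $f$; form the pushout $0\to Z\xrightarrow{f'} B'\xrightarrow{g'} C\to 0$ with pushout map $\tilde h\colon B\to B'$. If this sequence split, a retraction $r\colon B'\to Z$ of $f'$ would give $r\tilde h f=rf'h=h$, a factorization of $h$ through $f$ contradicting the assumption. So $g'$ is not a split epimorphism, and by right-almost-splitness of $g$ there is $\psi\colon B'\to B$ with $g\psi=g'$. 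Then $g(\psi\tilde h)=g'\tilde h=g$, and the minimality clause in (2) forces $u:=\psi\tilde h$ to be an automorphism of $B$; in particular $u^{-1}\psi$ is a left inverse of $\tilde h$. The composite $u^{-1}\psi f'\colon Z\to B$ satisfies $g\cdot u^{-1}\psi f'=g\psi f'=g'f'=0$, so it factors through $f$ as $f\phi$ for a unique $\phi\colon Z\to A$. A direct chase $f\phi h=u^{-1}\psi f' h=u^{-1}\psi\tilde h f=u^{-1}u f=f$, combined with $f$ being monic, gives $\phi h=1_A$, so $h$ is a split monomorphism. Hence $f$ is left almost split, finishing (1). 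The main obstacle is precisely this pushout-and-retraction chase, which is the only place where right-sided minimality data must be converted into left-sided almost-split information; every other implication is either an immediate consequence of the quoted functorial criteria or a short diagram chase.
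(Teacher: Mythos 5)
The paper does not actually prove this proposition; it is quoted from Auslander's Philadelphia notes (II~4.4) with the explicit disclaimer that proofs of such well-known facts are omitted, so there is no in-paper argument to compare against. Your proof is correct and follows the standard route (essentially the one in Auslander's notes and in ARS, Ch.~V): the functorial criteria quoted just before the statement give $(2)\Leftrightarrow(4)$, $(2')\Leftrightarrow(4')$ and $(1)\Rightarrow(3),(3')$; the localness of $\End(A)$ plus the five lemma gives $(3)\Rightarrow(2)$; and the pushout-and-retraction chase gives $(2)\Rightarrow(1)$, with everything else by duality. Two small points you should make explicit. First, in $(2)\Rightarrow(1)$ you only verify clause (b) of ``left almost split''; clause (a), that $f$ is not a split monomorphism, is needed too, though it is immediate since a splitting of $f$ would split the sequence and make $g$ a split epimorphism. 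Second, the pushout step is exactly where the hypothesis that $\mathcal C$ is closed under extensions enters: $B'$ is an extension of $C$ by $Z$ formed in the ambient abelian category, and you must know $B'\in\mathcal C$ before you may apply the right-almost-split property of $g$ to the morphism $g'\colon B'\to C$. Neither point is a genuine gap, but both deserve a sentence. Finally, note that your appeal to the quoted Proposition II~2.7 for $(2)\Rightarrow(4)$ implicitly also uses II~2.3 to know that $\cok(-,g)$ is simple (that proposition's hypothesis), which your phrase ``is immediate'' slightly glosses over.
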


Last, we will focus on the specified condition that $\mathcal C$ is an abelian category. The following Lemma is also well-known: 
\begin{lem}[\cite{BLP}, 2.1]\label{mras mono}
 Let $\mathcal C$ be an abelian category. Then
 \begin{enumerate}
\item The morphism $f: A\rightarrow B$  is a minimal right almost split monomorphism if and only if $\cok f$ is simple and $B$ is its projective cover.
\item The morphism $g: A\rightarrow B$ is a minimal left almost split epimorphism if and only if $\Ker f$ is simple and $A$ is its injective hull.
\end{enumerate}
\end{lem}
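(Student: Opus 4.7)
My plan is to prove part (1) in full and deduce part (2) by passing to the opposite category, since projectivity in $\cC\op$ is injectivity in $\cC$ and kernels/cokernels swap. For part (1) I would handle the two implications separately.

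For the forward direction, assume $f\colon A\to B$ is a minimal right almost split monomorphism. To show $\cok f$ is simple, suppose $0\neq X\subsetneq\cok f$ and form the pullback $P := B\times_{\cok f} X$, yielding an exact sequence $0\to\Ima f\to P\to X\to 0$ with $P\subsetneq B$. The inclusion $\iota\colon P\hookrightarrow B$ is not epi, hence not a split epi, so by right almost split $\iota=f\psi$ for some $\psi\colon P\to A$; because $f$ is mono this forces $P\subseteq\Ima f$, giving $X=0$, a contradiction. To show $B$ is projective, note that any non-split epi $q\colon Z\to B$ would, by right almost split, factor as $q=fh$, whence $\Ima q\subseteq\Ima f\subsetneq B$ contradicts $q$ being epi; so every epi onto $B$ splits. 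Finally, the cokernel projection $p\colon B\to\cok f$ is essential: any proper subobject $Y\subsetneq B$ yields an inclusion $Y\hookrightarrow B$ that is not epi, hence factors through $f$, so $Y\subseteq\Ima f$. Thus $\Ima f$ is a small subobject and, together with $B$ projective, $p$ is a projective cover.

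For the converse of (1), assume $f$ is a monomorphism with $\cok f$ simple and $p\colon B\to\cok f$ a projective cover. Right minimality of $f$ is automatic: $f$ mono and $f\alpha=f$ force $\alpha=1_A$. Also $f$ cannot be a split epi, for then it would be an isomorphism and $\cok f=0$. To verify right almost split, let $h\colon Z\to B$ be any non-split epi. Projectivity of $B$ forces every epi onto $B$ to split, so $h$ is not epi and $\Ima h\subsetneq B$. If $ph\neq 0$ then $ph$ is epi (as $\cok f$ is simple), making $p$ restricted to $\Ima h$ epi and contradicting the essentiality of $p$. Hence $ph=0$, so $h$ factors through $\Ker p=\Ima f$ and, since $f$ is mono, uniquely through $f$.

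Part (2) is obtained by applying part (1) inside $\cC\op$: a minimal left almost split epimorphism $g\colon A\to B$ in $\cC$ is a minimal right almost split monomorphism in $\cC\op$, and the condition ``$\cok$ is simple with $B$ its projective cover'' translates to ``$\Ker g$ is simple with $A$ its injective hull''. The main technical point is the pullback argument for simplicity of $\cok f$: one must work purely at the level of subobjects and factorizations and carefully identify $\Ima f$ with $\Ker p$ as subobjects of $B$. Beyond that, every step is a direct unpacking of the definitions of right almost split, right minimal, essential epimorphism, and projective cover.
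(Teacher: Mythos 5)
The paper does not prove this lemma at all --- it is quoted verbatim from \cite{BLP} (Lemma 2.1) as a known fact --- so there is no in-paper argument to compare yours against. Your proof is correct and self-contained, and it is the standard argument. The forward direction correctly reduces everything to the right-almost-split factorization property: the pullback along a proper nonzero subobject $X\subsetneq \cok f$ gives a proper subobject $P\subsetneq B$ whose inclusion must factor through $f$, forcing $P=\Ima f$ and $X=0$; projectivity of $B$ via ``every epimorphism onto $B$ splits''; and smallness of $\Ima f$ from the fact that every proper subobject of $B$ factors through $f$. The converse correctly uses that $B$ projective kills the case where the test map is epi, and essentiality of $p$ plus simplicity of $\cok f$ forces $ph=0$, hence factorization through $\Ker p=\Ima f\simeq A$. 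The dualization for part (2) is legitimate (and silently corrects the typo ``$\Ker f$'' for ``$\Ker g$'' in the statement).

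Two small points you should make explicit, though neither is a genuine gap. First, simplicity of $\cok f$ also requires $\cok f\neq 0$: this follows because $f$ is by definition not a split epimorphism, and a monomorphism that is an epimorphism in an abelian category is an isomorphism, hence a split epimorphism; so $f$ is not epi and $\cok f\neq 0$. Second, in the converse you (correctly) add the hypothesis that $f$ is a monomorphism; as literally written the right-hand side of the equivalence does not force this (one could precompose with a projection killing a summand of $A$ without changing $\cok f$), so your reading is the intended one and should be stated. Also, the phrase ``non-split epi'' in your verification of the almost split property should read ``morphism that is not a split epimorphism''; your subsequent reasoning makes clear that this is what you mean.
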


With that, we have a characterization of minimal right almost split morphisms:
\begin{lem}\label{dich}
If $f: A\rightarrow B$ is a minimal right almost split morphism in abelian category $\mathcal C$, then exactly one of the following two will hold:
\begin{enumerate}
\item $f$ is an epimorphism and $B$ is not projective.
\item  $f$ is a monomorphism and $B$ is projective.
\end{enumerate}
\end{lem}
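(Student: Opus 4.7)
The plan is to split the analysis by whether $B$ is projective, and to use the epi-mono factorization of $f$ together with the right-almost-split property. First I would observe that the two cases in the statement are mutually exclusive: if $f$ were both a monomorphism and an epimorphism, it would be an isomorphism in the abelian category $\mathcal C$, hence in particular a split epimorphism, contradicting right almost splitness.

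Suppose first that $B$ is projective. Then any epimorphism onto $B$ splits, so if $f$ were an epimorphism, it would be a split epimorphism, again contradicting the definition of right almost split. Hence $f$ is not an epimorphism. To show $f$ is a monomorphism, I would factor $f=me$ with $e\colon A\twoheadrightarrow \Ima f$ and $m\colon \Ima f\hookrightarrow B$, where $m$ is a proper monomorphism. In particular $m$ is not a split epimorphism (a mono that is split epi would be an iso, whereas $m$ is a proper inclusion). By the right-almost-split property applied to $m$, there exists $h\colon \Ima f\to A$ with $m=fh=meh$; since $m$ is a monomorphism this yields $eh=1_{\Ima f}$, so $e$ is a split epimorphism and $A\cong \Ima f\oplus \Ker e$ with $\Ker e = \Ker f$. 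Since $f$ vanishes on the direct summand $\Ker f$, right minimality (in the equivalent form: a direct summand on which $f$ vanishes must be zero) forces $\Ker f=0$, so $f$ is a monomorphism, giving case (2).

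For the converse direction, assume $B$ is not projective. The same epi-mono argument applies: if $f$ is not an epimorphism then $m\colon \Ima f\hookrightarrow B$ is a proper inclusion, hence not a split epimorphism, and the identical factorization-plus-right-minimality argument shows $f$ is a monomorphism. But then Lemma \ref{mras mono}(1) tells us that $B$ is the projective cover of $\cok f$, in particular projective, contradicting our assumption. Therefore $f$ must be an epimorphism, yielding case (1).

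The only step with any content is the argument that $f$ is a monomorphism whenever it fails to be an epimorphism, and this is a routine combination of the right-almost-split factorization of the image inclusion with the characterization of right minimality; I do not anticipate any genuine obstacle. The role of Lemma \ref{mras mono} is precisely to identify "$f$ a non-epi mras morphism" with "$B$ projective", which is what makes the dichotomy exhaustive and exclusive.
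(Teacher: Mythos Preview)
Your proof is correct and uses essentially the same argument as the paper: both hinge on showing that if $f$ is not an epimorphism, then lifting the proper image inclusion $\Ima f\hookrightarrow B$ through $f$ (via right almost splitness) and invoking right minimality forces $f$ to be a monomorphism, after which Lemma~\ref{mras mono} identifies $B$ as projective. The only cosmetic difference is that the paper splits on whether $f$ is an epimorphism while you split on whether $B$ is projective; this causes you to invoke the core argument twice rather than once, but the content is the same.
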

\begin{proof}
If $f$ is an epimorphism, then since it is minimal right almost split, it is clear that $B$ is not projective.

Now assume $f$ is not an epimorphism, then the monomorphism $i:\Ima f\hookrightarrow B$ is not a split epimorphism. Since $f$ is right almost split, $i$ factors through $f$, which implies that $\Ima f$ is a direct summand of $A$. It follows that $A\simeq \Ima f$ because $f$ is right minimal. Hence $f$ is a monomorphism and by \ref{mras mono} the module $B$ is projective. \end{proof}

\section{Existence and uniqueness theorems}

In this section, let $\mathcal C$ be an additive category. For any morphism $f: X\rightarrow Y$, we have an induced exact sequence of functors:

$$
\xymatrix{(-,X)\ar[r]^{(-,f)}&(-,Y)\ar[r]&F\ar[r]&0.}
$$
Hence the cokernel functor $F$ is finitely presented. We are going to show when the socle of the cokernel functor gives rise to the existence of the unique minimal right determiner of $f$.  
%
%
%
%
%
%
\subsection{Almost factorization} 
We will recall the notion of almost factorization, which will always contribute towards finding the minimal right determiners, however sometimes it will not be enough.
\begin{defn}\label{aft defn}\cite[XI.2]{ARS}
Let $f:X\rightarrow Y$ be a morphism in an additive category $\mathcal C$. A morphism $\alpha: Z\rightarrow Y$ is said to {\bf almost factor through $f$} if $Z$ is indecomposable and it satisfies the following:\\ 
$\bullet$ $\alpha$ cannot factor through $f$.\\
$\bullet$ For any $U$ and any morphism $h:U\rightarrow Z$ in $\rad_{\mathcal C}(U,Z)$, $\alpha h$ can factor through $f$.
\end{defn}
 
  We also say that {\bf an indecomposable object $Z$ can almost factor through $f$} if there is a morphism $\alpha: Z\rightarrow Y$ which can almost factor through $f$.
\vskip5pt
We will also use the following functorial description of almost factorization later. 

\begin{lem}\label{aft}
Let $Z$ be an indecomposable object in an additive category $\mathcal C$. The simple functor $(-,Z)/\rad(-,Z)$ is a subfunctor of $\cok(-,f)$ if and only if $Z$ can almost factor through $f$.
\end{lem}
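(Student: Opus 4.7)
The plan is to use the Yoneda lemma throughout to identify natural transformations $(-,Z)\to\cok(-,f)$ with equivalence classes of morphisms $\alpha:Z\to Y$ in $\cok(-,f)(Z)=\Hom(Z,Y)/\Ima(Z,f)$. Under this identification, the two defining conditions of ``$\alpha$ almost factors through $f$'' should match, respectively, (a) nonvanishing of the corresponding natural transformation at $1_Z$ and (b) its vanishing on the radical subfunctor $\rad(-,Z)$. Once this dictionary is set up, the two directions of the equivalence become short formal manipulations.

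For the $(\Leftarrow)$ direction I would proceed as follows. Given $\alpha:Z\to Y$ that almost factors through $f$, form the natural transformation $\phi=\pi\circ(-,\alpha):(-,Z)\to\cok(-,f)$, which at an object $U$ sends $h:U\to Z$ to $\alpha h+\Ima(U,f)$. The hypothesis that $\alpha h$ factors through $f$ whenever $h\in\rad(U,Z)$ says exactly that $\rad(-,Z)\subseteq\ker\phi$, so $\phi$ descends to $\bar\phi:(-,Z)/\rad(-,Z)\to\cok(-,f)$. The hypothesis that $\alpha$ itself does not factor through $f$ says $\phi_Z(1_Z)\neq 0$, whence $\bar\phi\neq 0$; since its source is simple (using that $\End(Z)$ is local), $\bar\phi$ is automatically a monomorphism.

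For the $(\Rightarrow)$ direction I would start from an inclusion $\iota:(-,Z)/\rad(-,Z)\hookrightarrow\cok(-,f)$ and exploit the projectivity of representable functors: composing $\iota$ with the canonical projection $q:(-,Z)\twoheadrightarrow(-,Z)/\rad(-,Z)$ and lifting through the epimorphism $\pi:(-,Y)\twoheadrightarrow\cok(-,f)$ produces some $\beta:(-,Z)\to(-,Y)$, which by Yoneda has the form $\beta=(-,\alpha)$ for a morphism $\alpha:Z\to Y$. Evaluating the identity $\pi\circ(-,\alpha)=\iota\circ q$ at $1_Z$ gives $\alpha+\Ima(Z,f)=\iota_Z(1_Z+\rad\End(Z))$, which is nonzero since $\iota$ is injective and $1_Z\notin\rad\End(Z)$; hence $\alpha$ does not factor through $f$. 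Evaluating the same identity at an arbitrary $h\in\rad(U,Z)$ gives $\alpha h+\Ima(U,f)=\iota_U(0)=0$, i.e.\ $\alpha h$ does factor through $f$. Thus $\alpha$ almost factors through $f$.

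I do not anticipate a genuine obstacle: the argument is essentially a two-step Yoneda/projectivity calculation, and its length is dominated by bookkeeping. The only subtlety to keep in mind is that writing down the quotient $(-,Z)/\rad(-,Z)$ as a \emph{simple} functor relies on $\End(Z)$ being local, which is the reason $Z$ must be indecomposable (and which, as in the rest of the paper, is automatic in the Krull--Schmidt setting where this lemma is applied).
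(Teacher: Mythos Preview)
Your proof is correct and follows essentially the same approach as the paper: both directions use the Yoneda lemma together with the projectivity of $(-,Z)$ to pass between morphisms $\alpha:Z\to Y$ and natural transformations $(-,Z)\to(-,Y)$, and both conclude the $(\Leftarrow)$ direction by noting that a nonzero map out of a simple functor is automatically a monomorphism. Your closing remark that simplicity of $(-,Z)/\rad(-,Z)$ requires $\End(Z)$ to be local is a valid caveat, though note that the statement of the lemma already calls this functor ``the simple functor'', so this hypothesis is effectively built in.
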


\begin{proof} ``if part'':  Suppose $Z$ can almost factor through $f$. Then there exists a morphism $\alpha:Z\rightarrow Y$ which can almost factor through $f$. By the definition of almost factorization, $\Ima(-,\alpha)\not\subseteq \Ima(-,f)$ and $(-,\alpha)(i(\rad(-,Z)))\subseteq \Ima(-,f)$ i.e. $\pi (-,\alpha) \neq 0$ and $\pi (-,\alpha) i = 0$ in the following diagram

$$
\xymatrix{0\ar[r]&\rad(-,Z)\ar@{-->}[d]\ar[r]^i&(-,Z)\ar[r]^-\rho\ar[d]^{(-,\alpha)}&(-,Z)/\rad(-,Z)\ar[r]\ar@{-->}[d]^\iota&0\\
0\ar[r] &\Ima(-,f)\ar[r]&(-,Y)\ar[r]^-\pi&\cok(-,f)\ar[r]&0.
}
$$

So there is a non-zero morphism $\iota: (-,Z)/\rad(-,Z)\rightarrow \cok(-,f)$ making the right square commutative. Since $(-,Z)/\rad(-,Z)$ is a simple functor, $\iota$ must be a monomorphism and hence $(-,Z)/\rad(-,Z)$ is a subfunctor of $\cok(-,f)$.
\vskip 5pt

``only if part'': Suppose $\iota: (-,Z)/\rad(-,Z)\rightarrow \cok(-,f)$ is a monomorphism. Since $(-,Z)$ is projective, there is a morphism $\eta: (-,Z)\rightarrow (-,Y)$ lifting the epimorphism $\pi:(-,Y)\rightarrow \cok (-, f)$. i.e. $\iota\rho=\pi\eta$. By Yoneda Lemma, $\eta=(-,\alpha)$ for some $\alpha: Z\rightarrow Y$. It follows that $\pi(-,\alpha)\neq0$ and $\pi(-,\alpha)i=0$, for the inclusion $i:\rad(-,Z)\rightarrow (-,Z)$. So $\alpha$ can almost factor through~$f$. \end{proof}

Next, we discuss the relation between almost factorization and minimal right determiners and show that all objects which almost factor through $f$ must be in any minimal right determiner, however sometimes more objects have to be included in order to obtain a minimal right determiner.

\begin{lem} \label{common}
Let $\mathcal D$ be a right determiner of a morphism $f:X\rightarrow Y$. If an indecomposable object $Z$ can almost factor through $f$ then $Z\in \mathcal D$.
\end{lem}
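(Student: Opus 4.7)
The strategy is a short proof by contradiction using Corollary \ref{contrapositive2} together with the defining property of almost factorization. Let $\alpha: Z \rightarrow Y$ be a morphism witnessing that $Z$ almost factors through $f$; in particular, $\alpha$ does not factor through $f$, while $\alpha h$ factors through $f$ for every $h \in \rad_{\mathcal C}(U,Z)$ and every $U$.

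Suppose for contradiction that $Z \notin \mathcal D$. Since $\mathcal D$ is closed under summands and $Z$ is indecomposable, no indecomposable object of $\mathcal D$ is isomorphic to $Z$. Now apply Corollary \ref{contrapositive2} to the morphism $\alpha: Z \rightarrow Y$ (in the role of $f':X'\rightarrow Y$), which by hypothesis does not factor through $f$. This produces an indecomposable object $Z' \in \mathcal D$ and a morphism $g:Z' \rightarrow Z$ such that $\alpha g$ does not factor through $f$.

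The key step is to verify that $g \in \rad_{\mathcal C}(Z',Z)$. Since $Z' \not\simeq Z$ (by our assumption $Z \notin \mathcal D$) and both $Z$ and $Z'$ are indecomposable with local endomorphism rings, any morphism $g:Z' \rightarrow Z$ must lie in the radical: if it did not, then some $g':Z \rightarrow Z'$ would make $1_{Z'} - g'g$ non-invertible, and locality of $\End(Z')$ would force $g'g$ to be an automorphism of $Z'$, exhibiting $g$ as a split monomorphism, which together with the indecomposability of $Z$ would make $g$ an isomorphism, contradicting $Z' \not\simeq Z$.

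Once $g \in \rad_{\mathcal C}(Z',Z)$ is established, the almost factorization property of $\alpha$ applied with $U = Z'$ and $h = g$ yields that $\alpha g$ factors through $f$, contradicting the choice of $g$. Hence $Z \in \mathcal D$, as required. The only nontrivial point in this plan is the radical argument in the middle paragraph, which relies on the indecomposables involved having local endomorphism rings; everything else is a direct unwinding of the definitions and of Corollary \ref{contrapositive2}.
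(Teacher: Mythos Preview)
Your proof is correct and follows essentially the same route as the paper's: assume $Z\notin\mathcal D$, apply Corollary~\ref{contrapositive2} to the non-factoring morphism $\alpha:Z\to Y$ to obtain an indecomposable $Z'\in\mathcal D$ and $g:Z'\to Z$ with $\alpha g$ not factoring through $f$, observe $g\in\rad_{\mathcal C}(Z',Z)$ because $Z'\not\simeq Z$, and contradict the almost factorization property. The paper's proof is terser on the radical step (it simply asserts $\beta\in\rad_{\mathcal C}(U,Z)$ from $Z\notin\mathcal D$), whereas you spell out the argument via local endomorphism rings; note that this locality is not part of the stated hypotheses (the ambient category is only assumed additive), so your extra justification in fact surfaces an implicit assumption that the paper leaves unspoken.
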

\begin{proof} Suppose $\alpha: Z\rightarrow Y$ can almost factor through $f$ and $Z\notin \mathcal D$.
Because $\alpha$ cannot factor through $f$ and $\mathcal D$ is a right determiner of $f$, by Corollary \ref{contrapositive2} there is some indecomposable object $U\in \mathcal D$ and a morphism $\beta: U\rightarrow Z$ such that $\alpha\beta$ cannot factor through $f$. Since $Z\notin \mathcal D$, and hence $\beta\in\rad_\mathcal C(U,Z)$ . It contradicts the fact that $\alpha$ can almost factor through~$f$. \end{proof}

Consequently, we have the following crucial observation:
\begin{lem}\label{exist min}
Let $\mathcal D_0$ is the subcategory additively generated by the indecomposable objects which can almost factor through $f$. If $f$ can be right determined by $\mathcal D_0$, then 
\begin{enumerate}
\item $\mathcal D_0=\mathop{\bigcap}\limits_{\mathcal D\in\mathcal S}\mathcal D$, where $\mathcal S$ is the poset of all the right determiners of $f$.
\item $\mathcal D_0$ is the unique minimal right determiner.
\end{enumerate}
\end{lem}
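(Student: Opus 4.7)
The plan is to leverage Lemma \ref{common} as the essential input and then argue purely by set containment in the poset $\mathcal S$ of right determiners. The hypothesis gives us two things: every indecomposable in $\mathcal D_0$ almost factors through $f$, and $\mathcal D_0$ is itself a right determiner of $f$. Together these let us identify $\mathcal D_0$ as the minimum element of $\mathcal S$, from which both claims drop out.

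First I would establish the inclusion $\mathcal D_0 \subseteq \mathcal D$ for every $\mathcal D \in \mathcal S$. Indeed, each indecomposable additive generator $Z$ of $\mathcal D_0$ almost factors through $f$ by definition, and Lemma \ref{common} then forces $Z \in \mathcal D$. Since subcategories in question are closed under direct sums and summands and are determined by their indecomposable objects, this yields $\mathcal D_0 \subseteq \mathcal D$. Taking intersections over all $\mathcal D \in \mathcal S$ gives $\mathcal D_0 \subseteq \bigcap_{\mathcal D \in \mathcal S} \mathcal D$.

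Next I would use the hypothesis that $f$ is right determined by $\mathcal D_0$, which places $\mathcal D_0$ in the poset $\mathcal S$. The reverse inclusion $\bigcap_{\mathcal D \in \mathcal S} \mathcal D \subseteq \mathcal D_0$ is then immediate because $\mathcal D_0$ itself appears in the intersection. This proves part (1): $\mathcal D_0 = \bigcap_{\mathcal D \in \mathcal S} \mathcal D$.

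For part (2), I would observe that $\mathcal D_0$ is the minimum (not merely a minimal) element of $\mathcal S$: it lies in $\mathcal S$ and is contained in every other element. Consequently no $\mathcal D' \in \mathcal S$ can satisfy $\mathcal D' \subsetneq \mathcal D_0$, so $\mathcal D_0$ is a minimal right determiner; and if $\mathcal D'$ is any minimal right determiner, then $\mathcal D_0 \subseteq \mathcal D'$ combined with minimality of $\mathcal D'$ forces $\mathcal D' = \mathcal D_0$, giving uniqueness. There is no real obstacle in this argument, since Lemma \ref{common} does all the heavy lifting; the only thing one must take care of is making sure the subcategory-closure conventions (under direct sums and summands, as fixed in the paper) make the passage from ``indecomposable containment'' to ``subcategory containment'' automatic, which it is.
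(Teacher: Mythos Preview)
Your argument is correct and follows essentially the same route as the paper: both use Lemma \ref{common} to get $\mathcal D_0\subseteq\bigcap_{\mathcal D\in\mathcal S}\mathcal D$, then use the hypothesis $\mathcal D_0\in\mathcal S$ for the reverse inclusion, concluding that $\mathcal D_0$ is the minimum element of $\mathcal S$. The paper additionally separates out the trivial case $\mathcal D_0=\mathbf 0$, but your argument already covers it uniformly.
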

\begin{proof}  If $\mathcal D_0=\bf 0$, then $f$ is a split epimorphism. Hence the statements hold automatically. Now assume $\mathcal D_0$ contains at least one non-zero object.  Since $f$ can be right determined by $\mathcal D_0$, it follows that $\mathcal D_0\in\mathcal S$. On the other hand, by Lemma \ref{common}, $\mathcal D_0\subseteq\mathop{\bigcap}\limits_{\mathcal D\in\mathcal S}\mathcal D$. So $\mathcal D_0=\mathop{\bigcap}\limits_{\mathcal D\in\mathcal S}\mathcal D$ 
which is the unique minimal right determiner. \end{proof}

\begin{defn}\label{semi strong def}
We say that a minimal right determiner $\mathcal D$ of a morphism $f$ is {\bf semi-strong}, if all the non-zero indecomposable objects in $\mathcal D$ can almost factor through $f$. By convention, $\mathcal D=\bf 0$ is semi-strong. 
\end{defn}

\begin{rmk}\label{ss unique}
According to Lemma \ref{exist min}, if $f$ has a semi-strong minimal right determiner $\mathcal D$, then this minimal right determiner is unique. However, the converse is not true (see Example \ref{wedge close}).
\end{rmk}


\begin{defn}\label{anf defn}
Let $f:X\rightarrow Y$ be a morphism in an additive category $\mathcal C$, we say a morphism 
$g:X'\rightarrow Y$ is an {\bf absolute non-factorization} of $f$ if $X'$ is indecomposable, $g$ cannot factor through $f$ and for any indecomposable $T\in\mathcal C$ and any morphism $\alpha: T\rightarrow X'$, $g\alpha$ cannot almost factor through $f$.
\end{defn}


%
%

An example of a morphism and an absolute non-factorization is given in Example \ref{non-strong ex}. 
 We now give a typical construction of a sequence of morphisms using absolute non-factorizations, which will be used in proofs later. 

\begin{lem} \label{chain}
Let $f:X\rightarrow Y$ be a morphism which has a minimal right determiner $\mathcal D$. 
\begin{enumerate}
\item If for an indecomposable object $X'$ the morphism $f': X^\prime\rightarrow Y$ can neither factor through nor almost factor through $f$, then there is an indecomposable object $Z\in \mathcal D$ and a non-isomorphism $g: Z\rightarrow X^\prime$ such that $f^\prime g$ cannot factor through $f$. 
\item  If $f': X^\prime\rightarrow Y$ is an absolute non-factorization of $f$, then there is an (infinite) sequence of non-isomorphisms $\alpha_i$:
$$
\cdots Z_2\stackrel{\alpha_2}\rightarrow Z_1\stackrel{f^\prime\alpha_1}\rightarrow Y
$$
where $Z_i$'s are indecomposable objects in $\mathcal D$, such that the morphism $f'\alpha_1\cdots\alpha_i$ cannot factor through $f$ for $\forall i>0$. 
\end{enumerate}
\end{lem}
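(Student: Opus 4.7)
The plan is to prove (1) by contradiction and then deduce (2) by iterating (1). For (1), the existence of \emph{some} indecomposable $Z \in \mathcal D$ and $g : Z \to X'$ with $f'g$ not factoring through $f$ is already guaranteed by Corollary \ref{contrapositive2} applied to the non-factorizing morphism $f' : X' \to Y$; the real content of (1) is that $g$ can be arranged to be a non-isomorphism. I will argue by contradiction: assuming that every indecomposable $Z \in \mathcal D$ and every non-isomorphism $g : Z \to X'$ satisfies $f'g$ factors through $f$, I will show $f'$ must almost factor through $f$, contradicting the standing hypothesis.

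Concretely, under that contradiction hypothesis, pick any $h \in \rad_{\mathcal C}(U, X')$; the goal is to show $f'h$ factors through $f$. If not, Corollary \ref{contrapositive2} applied to $f'h : U \to Y$ supplies an indecomposable $W \in \mathcal D$ and $k : W \to U$ with $f'(hk)$ not factoring through $f$. The contradiction assumption forces $hk : W \to X'$ to be an isomorphism, so setting $s := k(hk)^{-1} : X' \to U$ gives $hs = 1_{X'}$ and makes $sh \in \End(U)$ idempotent. Since $h \in \rad_{\mathcal C}(U, X')$, the defining property of the radical makes $1_U - sh$ invertible; but any idempotent $e$ with $1_U - e$ invertible is necessarily zero (from $(1_U - e)^2 = 1_U - e$ together with invertibility). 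So $sh = 0$, whence $s = s \cdot 1_{X'} = s(hs) = (sh)s = 0$, forcing $1_{X'} = hs = 0$ and contradicting that $X'$ is nonzero (being indecomposable). Therefore $f'h$ factors through $f$ for every such $h$, so $f'$ almost factors through $f$, which is the desired contradiction and proves (1).

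For (2), the plan is to iterate (1). The base step applies (1) to $f'$ itself: specializing the absolute non-factorization hypothesis to $T = X'$ and $\alpha = 1_{X'}$ shows $f'$ does not almost factor through $f$, and by assumption $f'$ does not factor through $f$ either; (1) then yields $Z_1 \in \mathcal D$ indecomposable and a non-isomorphism $\alpha_1 : Z_1 \to X'$ with $f'\alpha_1$ not factoring through $f$. Inductively, once $\alpha_1, \ldots, \alpha_i$ have been constructed, the composite $\alpha_1 \cdots \alpha_i : Z_i \to X'$ is a morphism from an indecomposable to $X'$, so absolute non-factorization of $f'$ forbids $f'(\alpha_1 \cdots \alpha_i)$ from almost factoring through $f$; by induction this morphism also does not factor through $f$. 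Applying (1) to it yields $Z_{i+1} \in \mathcal D$ indecomposable and a non-isomorphism $\alpha_{i+1} : Z_{i+1} \to Z_i$ with $f'\alpha_1 \cdots \alpha_{i+1}$ not factoring through $f$, completing the inductive step and producing the claimed infinite sequence.

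The main obstacle is the idempotent calculation in (1) that leverages the exact defining property of $\rad_{\mathcal C}(U, X')$ to promote the split-ness of $h$ (obtained from the determiner property applied to $f'h$) into the collapse $X' = 0$. Once that step is in hand, the iteration in (2) is routine bookkeeping with the determiner property and the definition of absolute non-factorization.
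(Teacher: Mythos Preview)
Your proof is correct, but for part (1) the paper takes a considerably more direct route. Rather than arguing by contradiction, the paper simply unpacks the negation of ``almost factor through'': since $f'$ does not almost factor through $f$, there exist $U$ and a morphism $u\in\rad_{\mathcal C}(U,X')$ with $f'u$ not factoring through $f$. Applying Corollary~\ref{contrapositive2} to $f'u$ then yields an indecomposable $Z\in\mathcal D$ and $v:Z\to U$ with $f'uv$ not factoring through $f$, and one sets $g=uv$. Because $u$ lies in the radical and the radical is a two-sided ideal, $g=uv$ is automatically a non-isomorphism. Your contradiction argument is valid but essentially rederives this ideal property by hand via the idempotent calculation; the paper's approach avoids that detour by composing in the opposite order (first exit the radical condition, then apply the determiner). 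For part (2) the two arguments coincide: both iterate (1).
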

\begin{proof}
 (1) By assumption, $f'$ can neither factor through $f$ nor almost factor through $f$. So there is some indecomposable object $U$ and a non-isomorphism $u:U\rightarrow X^\prime$ such that $f^\prime u$ cannot factor through $f$. Since $\mathcal D$ is a minimal right determiner, there is an indecomposable object $Z\in \mathcal D$ and $v: Z\rightarrow U$ such that $f^\prime uv$ cannot factor through $f$.  Hence $g=uv$ is the desired morphism, which is not an isomorphism.

(2) Apply (1) iteratively. 
\end{proof}

\subsection{The uniqueness of minimal right determiner} In the following we give a criterium of the uniqueness of the minimal right determiner. Recall that in a Krull-Schmidt category, $\End(Z)$ is a local ring when $Z$ is an indecomposable object. $\Hom(Z,Y)$ is an $\End(Z)$-module. If $f: X\rightarrow Y$ is a morphism, then $\Ima \Hom(Z,f)$ is an $\End(Z)$-submodule of $\Hom(Z,Y)$. Since $\rad \End(Z)$ is the unique maximal ideal of $\End(Z)$, $(\rad \End(Z))\Hom(Z,Y)$ is a submodule of $\Hom(Z,Y)$.

\begin{thm}\label{unique}
Let $\mathcal C$ be a Krull-Schmidt category and let $\mathcal D$ be a minimal right determiner of $f:X\rightarrow Y$. Suppose for each indecomposable object $Z\in \mathcal D$ there is a natural number $n$ such that $(\rad \End(Z))^n\Hom(Z,Y)\subseteq \Ima\Hom(Z,f)$. Then $\mathcal D$ is the unique semi-strong minimal right determiner of $f$.

%
\end{thm}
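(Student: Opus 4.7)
The plan is to show that $\mathcal D$ is semi-strong, i.e.\ every indecomposable $Z \in \mathcal D$ almost factors through $f$; the uniqueness will then be a consequence of Remark \ref{ss unique}. Fix an indecomposable $Z \in \mathcal D$ and argue by contradiction: assuming $Z$ does not almost factor through $f$, I will iteratively construct a sequence $r_1, r_2, \ldots \in \rad \End(Z)$ together with a single morphism $\alpha_0 : Z \to Y$ such that none of the composites $\alpha_i := \alpha_0 r_1 \cdots r_i$ factor through $f$. The hypothesis then yields an $n$ with $\alpha_n \in (\rad \End(Z))^n \Hom(Z,Y) \subseteq \Ima \Hom(Z,f)$, contradicting the construction.

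To produce the starting datum, I apply Lemma \ref{in mrd} to $Z$ to obtain $g : T \to Y$ and $\psi : Z \to T$ such that $\alpha_0 := g\psi$ does not factor through $f$, while $g\varphi$ does factor through $f$ for every $\varphi : Z' \to T$ with $Z'$ an indecomposable of $\mathcal D$ not isomorphic to $Z$. For the inductive step, suppose $\alpha_{i-1}$ does not factor through $f$. Since $Z$ does not almost factor through $f$, there exists an object $U$ (taken indecomposable by Lemma \ref{sum}) and $h \in \rad(U,Z)$ with $\alpha_{i-1} h$ not factoring through $f$. Because $\mathcal D$ right-determines $f$, Corollary \ref{contrapositive2} yields an indecomposable $Z_i \in \mathcal D$ and $\beta : Z_i \to U$ with $\alpha_{i-1} h \beta$ not factoring through $f$. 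The composite $\psi r_1 \cdots r_{i-1} h \beta : Z_i \to T$ is then a morphism whose post-composition with $g$ does not factor through $f$, so Lemma \ref{in mrd}(2) forces $Z_i \cong Z$. After absorbing a chosen isomorphism $Z \cong Z_i$ into $\beta$, I set $r_i := h\beta \in \End(Z)$. This element lies in $\rad \End(Z)$: were $r_i$ a unit of the local ring $\End(Z)$, then $h$ would be a split epimorphism from the indecomposable $U$ onto the indecomposable $Z$, hence an isomorphism, contradicting $h \in \rad(U,Z)$. By construction $\alpha_i = \alpha_{i-1} r_i$ still does not factor through $f$, since otherwise post-composition with the chosen isomorphism would make $\alpha_{i-1} h \beta$ factor.

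The main obstacle I anticipate is controlling which object of $\mathcal D$ the determiner step returns at each iteration; a priori the $Z_i$ produced by Corollary \ref{contrapositive2} could be any indecomposable of $\mathcal D$, but we need it to remain isomorphic to $Z$ so that $r_i$ actually lives in $\End(Z)$ and the radical powers can accumulate. Lemma \ref{in mrd}(2) is precisely the tool that forces this, at the cost of requiring every step of the induction to be routed through the single pair $(g,\psi)$ chosen at the outset. Once this bookkeeping is in place, the nilpotency hypothesis on $\Hom(Z,Y)$ closes the argument in one step.
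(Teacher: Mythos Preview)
Your argument is correct and follows the same strategy as the paper's proof: use Lemma \ref{in mrd} to anchor everything at a single test morphism $g:T\to Y$, then iterate the ``neither factors nor almost factors'' step to produce radical endomorphisms of $Z$ whose composites eventually contradict the hypothesis. The only difference is organizational: where you inline the two-step construction (pass through an auxiliary indecomposable $U$ via $h\in\rad(U,Z)$, then return to $\mathcal D$ via the determiner property), the paper packages this as a single citation of Lemma \ref{chain}(1), which already delivers a non-isomorphism $\psi_i:Z'\to Z$ with $Z'\in\mathcal D$ indecomposable; and where you phrase the argument as a global contradiction (``assume $Z$ does not almost factor through $f$''), the paper performs a case split at each stage (``if $g\psi_0\cdots\psi_{i-1}$ almost factors through $f$ we are done, otherwise continue''). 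Both amount to the same induction, and your explicit verification that $r_i=h\beta$ lies in $\rad\End(Z)$ is exactly the content of the clause ``which is not an isomorphism'' in the proof of Lemma \ref{chain}(1).
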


\begin{proof}
According to Remark $\ref{mrd not minimal}$, without loss of generality we can assume $f$ is right minimal. If $f$ is an isomorphism, then $\mathcal D=\bf 0$ is the semi-strong minimal right determiner. 

Now assume $f$ is not an isomorphism and hence  $\mathcal D\neq \bf 0$. Let $Z$ be an indecomposable object in $\mathcal D$. Then due to Lemma \ref{in mrd}, there exists a morphism $g:T\rightarrow Y$ which cannot factor through $f$ such that
(i) For any indecomposable $Z^\prime\in \mathcal D$, $Z\not\simeq Z$ and any morphism $\varphi:Z^\prime\rightarrow T$, $g\varphi$ can factor through $f$.
(ii) There is a morphism $\psi_0:Z\rightarrow T$ such that $g\psi_0$ cannot factor through $f$.

Now consider the morphism $g\psi_0$. If $g\psi_0$ can almost factor through $f$, then we finish. Otherwise, by Lemma \ref{chain} there is some indecomposable $Z^\prime\in \mathcal D$ and a non-isomorphism $\psi_1:Z^\prime\rightarrow Z$ such that the composition $g\psi_0\psi_1$ cannot factor through $f$. Since $g$ satisfies condition (i), we have $Z^\prime\simeq Z$.

Consider the morphism $g\psi_0\psi_1: Z\rightarrow Y$. If it can almost factor through $f$, we finish. Otherwise continue the same procedure.

Notice that $\psi_1\cdots\psi_{i-1}\psi_i\in (\rad \End(Z))^{i}$. By the assumption that $(\rad \End(Z))^n\Hom(Z,Y)\subseteq\Ima\Hom(Z,f)$, this procedure must stop before $n$ steps. Hence $Z$ can almost factor through $f$. Therefore $\mathcal D$ is semi-strong and by Remark \ref{ss unique}, it is unique.
\end{proof}

\begin{rmk}\label{D} Here are some special cases when the conditions of the theorem hold:\\
\indent (a) $\mathcal C$ is a Krull-Schmidt category and for each indecomposable object $Z$, $\rad \End(Z)$ is nilpotent.\\
\indent (b) $\mathcal C$ is a Krull-Schmidt category and for each indecomposable object $Z$, $\End(Z)$ is artinian.\\
\indent (c) $\mathcal C$ is a $\Hom$-finite additive $k$-category having split idempotents.\\
\indent (d) $\mathcal C$ is a $\Hom$-finite abelian $k$-category.
\end{rmk}
In particular, we have:
\begin{cor}
Let $\Lambda$ be an artin algebra and $\mathcal C=\Lambda\textendash\modd$ be the category of finitely generated $\Lambda$-modules. Let $f$ be a morphism in $\mathcal C$. Then the minimal right determiner of $f$ is~unique.
\end{cor}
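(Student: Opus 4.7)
The plan is to deduce the corollary immediately from Theorem \ref{unique} via Remark \ref{D}, since the artin algebra setting falls squarely into the cases listed there.

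First I would verify that $\mathcal C = \Lambda\textendash\modd$ satisfies the hypotheses of Theorem \ref{unique}. It is Krull-Schmidt, since every finitely generated module over an artin algebra decomposes uniquely into indecomposables with local endomorphism rings. Moreover, for any indecomposable $Z \in \mathcal C$ the endomorphism ring $\End(Z)$ is a module-finite algebra over the commutative artinian center of $\Lambda$, hence itself artinian; in particular $\rad\End(Z)$ is nilpotent. So for some $n$ we have $(\rad\End(Z))^n = 0$, and the containment $(\rad\End(Z))^n \Hom(Z,Y) \subseteq \Ima\Hom(Z,f)$ holds trivially for any morphism $f:X\to Y$ and any indecomposable $Z$. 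This is precisely case \textup{(a)} (equivalently \textup{(b)}) of Remark \ref{D}.

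Second, I would invoke the Auslander-Reiten-Smal{\o}-Ringel formula recalled earlier: the object $\tau^-\widetilde{\Ker f}\oplus P(\soc\cok f)$ is a right determiner of $f$ with only finitely many indecomposable summands, so by passing to an appropriate sub-additive-closure a minimal right determiner $\mathcal D$ of $f$ exists. Theorem \ref{unique} then shows this $\mathcal D$ is semi-strong, and by Remark \ref{ss unique} it is the unique minimal right determiner. I anticipate no real obstacle, since the entire content of the corollary is the assembly of these two citations — this is precisely why it is stated as a corollary rather than a theorem.
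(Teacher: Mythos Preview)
Your proposal is correct and follows exactly the route the paper intends: the corollary is stated with no proof precisely because it is immediate from Theorem \ref{unique} via Remark \ref{D}(a)/(b), together with the already-cited existence of a (finite) right determiner in $\Lambda\textendash\modd$. Your explicit verification that $\End(Z)$ is artinian (hence has nilpotent radical) and your remark on existence are appropriate fleshing-out of what the paper leaves implicit.
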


\subsection{Strong minimal right determiner}
\begin{prop}\label{s is ss}
Let $f:X\rightarrow Y$ be a morphism in an additive category $\mathcal C$. If $f$ has no absolute non-factorization, then $f$ has a unique semi-strong minimal right determiner.
\end{prop}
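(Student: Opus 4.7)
The plan is to take $\mathcal D_0$ to be the full subcategory of $\mathcal C$ additively generated by the indecomposable objects that can almost factor through $f$, and to reduce the proposition to showing that $\mathcal D_0$ itself right determines $f$. Once this is known, Lemma \ref{exist min} automatically yields $\mathcal D_0$ as the unique minimal right determiner, and by construction $\mathcal D_0$ is semi-strong in the sense of Definition \ref{semi strong def}; it is therefore the unique semi-strong minimal right determiner of $f$.

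I will argue by contraposition using Corollary \ref{contrapositive2}. Assume $\mathcal D_0$ is not a right determiner; then there exists a morphism $f^\prime:X^\prime\rightarrow Y$ that does not factor through $f$ while $f^\prime g$ does factor through $f$ for every indecomposable $Z\in\mathcal D_0$ and every $g:Z\rightarrow X^\prime$. The goal is to manufacture from $f^\prime$ an absolute non-factorization of $f$, thereby contradicting the hypothesis of the proposition.

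The crucial observation is the following dichotomy: whenever $W$ is indecomposable and $\beta:W\rightarrow X^\prime$ is a morphism with $f^\prime\beta$ not factoring through $f$, the composition $f^\prime\beta:W\rightarrow Y$ is already an absolute non-factorization of $f$. To verify Definition \ref{anf defn} I suppose for contradiction that some $\alpha:T\rightarrow W$ with $T$ indecomposable satisfies: $f^\prime\beta\alpha$ almost factors through $f$. Then $T$ can almost factor through $f$ via the morphism $f^\prime\beta\alpha:T\rightarrow Y$, so by the definition of $\mathcal D_0$ the object $T$ belongs to $\mathcal D_0$; the standing hypothesis on $f^\prime$ then forces $f^\prime(\beta\alpha)$ to factor through $f$, in flat contradiction with the definition of almost factorization, which requires $f^\prime\beta\alpha$ not to factor through $f$.

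The main obstacle I foresee is the production of such $W$ and $\beta$ when $X^\prime$ is not itself indecomposable: in the indecomposable case one simply takes $W=X^\prime$ and $\beta=1_{X^\prime}$ and the dichotomy above immediately supplies the required absolute non-factorization. In full generality I plan to pass from the witnessing $f^\prime$ to a restriction along a morphism out of an indecomposable object, invoking Lemma \ref{sum} together with the indecomposable reformulation of right determination in Corollary \ref{contrapositive2} to guarantee that some such restriction still fails to factor through $f$. Once this technical reduction to an indecomposable source is secured, the dichotomy of the previous paragraph produces the absolute non-factorization needed to contradict the hypothesis, and the proof is complete.
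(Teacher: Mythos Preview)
Your proposal is correct and takes essentially the same approach as the paper: define $\mathcal D_0$ via the indecomposables that almost factor through $f$, verify that $\mathcal D_0$ right determines $f$ (the paper does this directly rather than by contraposition, restricting $f'$ to an indecomposable summand of $X'$ and observing it would otherwise be an absolute non-factorization), and then invoke Lemma \ref{exist min}. Your explicit handling of the reduction to an indecomposable source---applying Corollary \ref{contrapositive2} with $\mathcal D=\mathcal C$---is in fact cleaner than the paper's implicit appeal to indecomposable summands of $X'$.
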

\begin{proof}
Let $\mathcal D_0$ be the full subcategory additively generated by all indecomposable objects which can almost factor through $f$. Since $f$ has no absolute non-factorizations, for any morphism $f':X'\rightarrow Y$ which cannot factor through $f$, since $f'$ restricts on each indecomposable summand of $X'$ is an absolute non-factorization, there is some indecomposable $Z$ and a morphism $\alpha: Z\rightarrow X'$ such that $f'\alpha$ can almost factor through $f$. (Therefore $f'\alpha$ does not factor through $f$.)  So by Proposition \ref{contrapositive}, $f$ is right determined by $\mathcal D_0$. Hence, by Lemma \ref{exist min}, $\mathcal D_0=\mathcal D$ is the unique minimal right determiner of $f$. In particular, $\mathcal D$ is semi-strong.
\end{proof}

\begin{defn}\label{strong defn} A minimal right determiner $\mathcal D$ of a morphism $f: X\rightarrow Y$ is called {\bf strong} if $f$ has no absolute non-factorization as in the definition \ref{anf defn}. \end{defn}

Proposition \ref{s is ss} shows that strong minimal right determiners are semi-strong.

\begin{rmk}
\ \\
(1) Semi-strong minimal right determiners are not necessarily strong. (See example \ref{non-strong ex}).\\
(2) From Remark \ref{ss unique}, if $f$ has a strong minimal right determiner, then it is the unique minimal right determiner of $f$.
\end{rmk}

The notion of strong minimal right determiner follows from Auslander's idea that the minimal right determiner is given by the socle of the cokernel functor. In fact, this characterizes all the strong minimal right determiner. We are going to prove Theorem \ref{exist}.



%
\begin{thm} \label{exist1}
Let $f:X\rightarrow Y$ be a morphism in a Krull-Schmidt 
category $\mathcal C$. Suppose $\soc \cok(-,f)$ is essential and $\{(-,D_i)/r(-,D_i)\}$ is a complete set of pairwise non-isomorphic direct summands of $\soc\cok(-,f)$. Let $\mathcal D=\add\{D_i\}$, then
\begin{enumerate}
\item $\mathcal D$ is the minimal right determiner of $f$.
\item $\mathcal D$ is strong.
\end{enumerate} 
\end{thm}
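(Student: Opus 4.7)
The plan is to reduce both parts of the theorem to the statement that $f$ admits no absolute non-factorization, at which point Proposition \ref{s is ss} delivers the conclusion. I would begin by identifying the indecomposables that almost factor through $f$: by hypothesis each $(-,D_i)/r(-,D_i)$ is a subfunctor of $\soc\cok(-,f)\subseteq \cok(-,f)$, so by Lemma \ref{aft} every $D_i$ almost factors through $f$; conversely, if $Z$ is an indecomposable that almost factors through $f$, Lemma \ref{aft} exhibits $(-,Z)/r(-,Z)$ as a simple subfunctor of $\cok(-,f)$, which therefore lies in $\soc\cok(-,f)$ and by Corollary \ref{1-1} is isomorphic to some $(-,D_i)/r(-,D_i)$. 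Thus, up to isomorphism, the indecomposables almost factoring through $f$ are precisely the $D_i$, so $\mathcal D = \add\{D_i\}$ coincides with the subcategory $\mathcal D_0$ appearing in Proposition \ref{s is ss}.

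The main step is to show that $f$ has no absolute non-factorization. Let $f':X'\to Y$ be a morphism with $X'$ indecomposable which does not factor through $f$. I would consider the subfunctor $G\subseteq \cok(-,f)$ defined as the image of the natural transformation $\pi\circ(-,f'):(-,X')\to \cok(-,f)$. Then $G$ is non-zero because the class $\overline{f'}\in \cok(X',f)$ lies in $G(X')$, so by Proposition \ref{ess equiv} it contains a simple subfunctor $S$. By Corollary \ref{1-1} and the hypothesis of the theorem, $S$ is isomorphic, via some natural isomorphism $\iota$, to $(-,D_i)/r(-,D_i)$ for some $i$. Since $S(D_i)\neq 0$ and $S(D_i)\subseteq G(D_i)$, one may choose $h\in \Hom(D_i,X')$ with $\iota_{D_i}(\overline{f'h})$ non-zero in $\End(D_i)/r\End(D_i)$; locality of $\End(D_i)$ ensures this class is represented by an invertible element $\phi\in \End(D_i)$.

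The main obstacle is verifying that this $f'h:D_i\to Y$ actually almost factors through $f$. For any $U$ and any $u\in \rad_{\mathcal C}(U,D_i)$, naturality of $\iota$ gives
$$
\iota_U(\overline{f'hu}) \;=\; \iota_U\bigl(\cok(u,f)(\overline{f'h})\bigr) \;=\; \overline{\phi u} \;\in\; \Hom(U,D_i)/r\Hom(U,D_i).
$$
Because $\rad_{\mathcal C}$ is a two-sided ideal and $\phi$ is invertible, $\phi u\in \rad_{\mathcal C}(U,D_i)$, hence $\overline{\phi u}=0$ and consequently $\overline{f'hu}=0$, i.e.\ $f'hu$ factors through $f$. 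Together with $\overline{f'h}\neq 0$ this shows $f'h$ almost factors through $f$, contradicting the assumption that $f'$ is an absolute non-factorization. Applying Proposition \ref{s is ss} now yields that $\mathcal D = \add\{D_i\}$ is the unique minimal right determiner of $f$, and by Definition \ref{strong defn} this minimal right determiner is strong.
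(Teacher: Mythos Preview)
Your argument is correct. The core step --- for any $f':X'\to Y$ not factoring through $f$, locate a simple subfunctor inside the image of $\pi\circ(-,f')$ in $\cok(-,f)$ and lift via Yoneda to produce $h:D_i\to X'$ with $f'h$ almost factoring through $f$ --- is exactly the paper's proof of part~(2). Your naturality computation with $\iota$ makes explicit what the paper leaves to the invocation of Lemma~\ref{aft}.

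The organizational difference is in part~(1). The paper proves (1) directly through the ``subfunctor determined by a subcategory'' language: it shows that any subfunctor $G\subseteq(-,Y)$ with $G(D_i)\subseteq\Ima(D_i,f)$ for all $i$ must satisfy $G\subseteq\Ima(-,f)$ (otherwise $G/(G\cap\Ima(-,f))$ would contain a simple $S_i$ with $S_i(D_i)\neq 0$), and then appeals to Proposition~\ref{det fun} and Lemma~\ref{exist min}. You instead route both (1) and (2) through Proposition~\ref{s is ss}: having shown that the indecomposables almost factoring through $f$ are exactly the $D_i$ and that no absolute non-factorization exists, that proposition hands you the minimal right determiner $\mathcal D_0=\mathcal D$, and Definition~\ref{strong defn} gives strength for free. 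Your packaging is slightly more economical and avoids the subfunctor-determination machinery; the paper's route keeps the link to Auslander's original functorial definition (Proposition~\ref{det fun}) visible. Both arguments ultimately rest on the same essentiality-of-socle step and on Lemma~\ref{exist min}.
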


\begin{proof} (1)
We first show that the subfunctor $\Ima(-,f)$ of $\Hom(-,Y)$ is determined by $\mathcal D$.  Let $G$ be a subfunctor of $\Hom(-,Y)$. By Definition \ref{functor det obj def} It suffice to show that if $G(D_i)\subseteq \Ima(D_i, f)$ for all $D_i\in \mathcal D$, then $G$ is a subfunctor of $\Ima(-,f)$.

Since $G\subseteq\Hom(-,Y)$, it follows that the functor $G/(G\cap \Ima(-,f))\subseteq \Hom(-,Y)/\Ima(-,f)\simeq \cok(-,f)$.
%
 If $G/(G\cap\Ima(-,f))\neq 0$, then because $\soc \cok(-,f)$ is essential, there is some simple subfunctor $S_i\simeq (-,D_i)/r(-,D_i)$ such that $S_i\subseteq G/(G\cap\Ima(-,f))$ by Proposition \ref{ess equiv}. However
$$
0\neq S_i(D_i)\subseteq G(D_i)/(G(D_i)\cap\Ima(D_i,f))
$$
which implies that $G(D_i)\not\subseteq \Ima(D_i, f)$ which is a contradiction. Hence $G/(G\cap\Ima(-,f))=0$ and so $G\subseteq\Ima(-,f)$.

Therefore the subfunctor $\Ima(-,f)$ of $\Hom(-,Y)$ is determined by $\mathcal D$. By Proposition \ref{det fun}, $f$ is right determined by $\mathcal D$. By Proposition \ref{aft}, it follows that each $D_i\in\mathcal D$ can almost factor through $f$. 
Hence by Lemma \ref{exist min}, $\mathcal D$ is the minimal right determiner.

%
(2) By definition \ref{strong defn}, we need to show that for each morphism $g: T\rightarrow Y$  which cannot factor through $f$, there exists some morphism $h: Z\rightarrow T$ such that $gh$ can almost factor through $f$.

Let $g: T\rightarrow Y$ be a morphism which cannot factor through $f$, then the composition
$$
\xymatrix{(-,T)\ar[r]^{(-,g)}&(-,Y)\ar[r]^-\pi&\cok(-,f)}
$$
is non-zero. Then $\Ima(\pi\circ (-, g)) \stackrel{s}\hookrightarrow \cok(-,f )$ is a non-zero subfunctor. So it contains a simple subfunctor $S_i\simeq (-,D_i)/r(-,D_i)$ by our assumption that $\soc\cok(-,f)$ is essential. So $S_i$ is a subfunctor of $\Ima(\pi\circ(-, g))$  and we have the commutative diagram:

$$
\xymatrix{ (-,D_i)\ar[r]^{\rho_i}\ar@{-->}[d]^\eta&S_i\ar@{^(->}[d]^\iota\ar[r]&0\\
(-,T)\ar[r]^{\pi\circ(-,g)} &\Ima \pi(-,g)\ar[r]&0 
}
$$

By Yoneda Lemma, there is $h: D_i\rightarrow T$ such that $\eta= (-, h)$. It follows from Lemma \ref{aft} that $gh$ can almost factor through $f$.

Therefore $\mathcal D$ is the strong minimal right determiner of $f$.
\end{proof}

\begin{prop}\label{exist3}
Let $f:X\rightarrow Y$ be a morphism in a Krull-Schmidt category $\mathcal C$. If $f$ has a strong minimal right determiner then $\soc\cok(-,f)$ is essential.  
\end{prop}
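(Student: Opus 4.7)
The plan is to show that any non-zero subfunctor $H \subseteq \cok(-,f)$ contains a simple subfunctor, which by Proposition \ref{ess equiv} is equivalent to $\soc\cok(-,f)$ being essential. The strategy is to reduce, using Krull-Schmidt, to a non-zero element of $H$ represented by a morphism out of an indecomposable object, then apply the hypothesis that $f$ admits no absolute non-factorization to produce a morphism that almost factors through $f$, and finally use Lemma \ref{aft} to extract a simple subfunctor sitting inside $H$.

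First I would pick a non-zero $H \subseteq \cok(-,f)$ and choose some $W$ and a non-zero element $\bar g \in H(W)$, represented by a morphism $g \colon W \to Y$ with $\pi(g) = \bar g \neq 0$; in particular $g$ does not factor through $f$. Writing $W = \bigoplus_i W_i$ in its Krull-Schmidt decomposition with inclusions $\iota_i \colon W_i \to W$, the functoriality of $H$ gives $H(\iota_i)(\bar g) = \pi(g\iota_i) \in H(W_i)$; by Lemma \ref{sum}, since $g$ does not factor through $f$, at least one restriction $g' := g\iota_i \colon X' \to Y$ (with $X' := W_i$ indecomposable) does not factor through $f$ and represents a non-zero element of $H(X')$.

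Next I would invoke the strong minimal right determiner hypothesis: by Definition \ref{strong defn}, $f$ has no absolute non-factorization, so $g'$ is not one. By Definition \ref{anf defn}, this means there exist an indecomposable $T$ and a morphism $\alpha \colon T \to X'$ such that $g'\alpha \colon T \to Y$ almost factors through $f$. By Lemma \ref{aft}, the simple functor $S := (-,T)/\rad(-,T)$ embeds into $\cok(-,f)$, and tracing through the proof of that lemma, this embedding is realized as the image of the composition $\pi \circ (-, g'\alpha) \colon (-,T) \to \cok(-,f)$, which factors through $(-,T)/\rad(-,T)$ because $g'\alpha$ precomposed with any radical morphism factors through $f$.

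The final step is to check that this simple subfunctor $S$ actually lies inside $H$. Since $H$ is a subfunctor and $\pi(g') \in H(X')$, functoriality gives $\pi(g'\alpha) = H(\alpha)(\pi(g')) \in H(T)$, and more generally for any $u \colon U \to T$, the element $\pi(g'\alpha u) = H(\alpha u)(\pi(g'))$ lies in $H(U)$. Thus the image of $\pi \circ (-, g'\alpha)$, which equals $S$, is contained in $H$ pointwise, hence $S \subseteq H$ as subfunctors of $\cok(-,f)$. This provides the required simple subfunctor of $H$, and Proposition \ref{ess equiv} then yields that $\soc\cok(-,f)$ is essential. The main obstacle is bookkeeping in the third step: making sure the simple subfunctor produced by Lemma \ref{aft} is genuinely the one generated (under functoriality) by the element $\pi(g'\alpha) \in H(T)$, so that it lands inside $H$ rather than merely inside $\cok(-,f)$; the Krull-Schmidt hypothesis is used only mildly, to pass from an arbitrary $W$ to an indecomposable $X'$.
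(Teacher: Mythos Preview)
Your proof is correct and follows essentially the same approach as the paper's: pick a nonzero element of $H$, use the absence of absolute non-factorizations to produce a morphism that almost factors through $f$, and then use the Yoneda-type argument (which you phrase via Lemma~\ref{aft}, while the paper does it directly) to exhibit a simple subfunctor of $H$. The only cosmetic difference is that you spell out the Krull--Schmidt reduction to an indecomposable domain explicitly, whereas the paper leaves this implicit when invoking the strong hypothesis.
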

\begin{proof}
Suppose $\mathcal D = \add \{D_i\}$ is a strong minimal right determiner of $f$. It suffices to show that any non-zero subfunctor $H\subseteq \cok(-, f)$ has a simple subfunctor.

Suppose $T$ is an object such that $H(T)\neq 0$. Then there is a morphism $g:T\rightarrow Y$ which cannot factor through $f$. By our assumption that $\mathcal D$ is strong, there is some $D_i\in \mathcal D$ and a morphism $h: D_i\rightarrow T$ such that $gh$ can almost factor through $f$. The morphism
\begin{eqnarray*}
H(h): H(T)&\rightarrow& H(D_i)\\
g&\mapsto& gh
\end{eqnarray*}
implies that $H(D_i)\neq 0$. By Yoneda Lemma, there is a morphism $\eta: (-,D_i)\rightarrow H$ such that  $\eta_{D_i}$ sends $id_{D_i}$ to $gh$. The fact that $gh$ can almost factor through $f$ implies that  $\eta i=0$ in the following commutative diagram.
$$
\xymatrix{0\ar[r]&\rad(-,D_i)\ar[r]^i&(-,D_i)\ar[r]^-\rho\ar[d]^\eta&(-,D_i)/\rad(-,D_i)\ar[r]\ar@{-->}[dl]^\iota&0\\
             &&H}
$$

So there is a morphism $\iota: (-,D_i)/\rad(-,D_i)\rightarrow H$ such that $\iota \rho=\eta$. Since $\eta\neq0$, $\iota\neq 0$. Hence, it must be a monomorphism.
Therefore, we know that $H$ contains a simple subfunctor $(-,D_i)/\rad(-,D_i)$. Hence $\soc \cok(-,f )$ is essential.  
\end{proof}

{\bf Proof of Theorem \ref{exist}:} If $\soc\cok(-,f)$ is essential, then by Theorem \ref{exist1}, $f$ has a strong minimal right determiner. Conversely, if $f$ has a minimal right determiner, then by Proposition \ref{exist3}, $\soc\cok(-,f)$ is essential. \hfill$\qed$

\section{Minimal right determiner formula} 
Let $\mathcal C$ be an abelian category and $D^b(\mathcal C)$ be its bounded derived category.  The category $\mathcal C$ is called {\bf hereditary} if $\Ext_{\mathcal C}^i(A,B):=\Hom_{D^b(\mathcal C)}(A,B[i])=0$ for all $i\geq 2$ and $A,B\in \mathcal C$. In case the category $\mathcal C$ has enough projective objects or enough injective objects we can compute these Ext groups using projective or injective resolutions. The elements of the group $\Ext_{\mathcal C}^i(A,B)$ can also be described as Yoneda extensions which are equivalence classes of long exact sequences starting from $B$ and ending in $A$ of length $i$.
It is well-known that $\mathcal C$ is hereditary if and only if $\Ext_{\mathcal C}^1(X,-)$ preserves epimorphisms for all $X\in \mathcal C$ (\cite{RV}, Lemma A.1).

Throughout this section, {\bf assume $\mathcal C$ is a hereditary $\Hom$-finite abelian  $k$-category}. In particular, $\mathcal C$ is Krull-Schmidt. A class of typical examples of hereditary $\Hom$-finite abelian  $k$-categories will be the categories of finitely presented representations of strongly locally finite quivers which will be discussed in next section. Also there is a classification of noetherian hereditary abelian categories in \cite{RV}. We do not require our category $\mathcal C$ to have Serre duality.

By Theorem \ref{unique}, for every morphism $f\in\mathcal C$, if $f$ has a minimal right determiner, it is the unique minimal right determiner. Our goal is to answer the following questions:\\
(1) When does a morphism have a minimal right determiner in terms of properties of $\Ker f$ and $\cok f$? \\
(2) If a morphism $f$ has a minimal right determiner, how to compute it in terms of $f$?

 First of all, we prove this Lemma which will be heavily used in the following.
\begin{lem}\label{pullback}
Suppose we have the following commutative diagram with exact rows:
$$
\xymatrix{0\ar[r]&A_1\ar[r]^{f_1}\ar[d]^u&B_1\ar[r]^{g_1}\ar[d]^v&C_1\ar[d]^w\ar[r]&0\\
          0\ar[r]&A_2\ar[r]^{f_2}&B_2\ar[r]^{g_2}&C_2}
$$
and $u$ is a split monomorphism. Then $w$ can factor through $g_2$ if and only if $g_1$ is a split epimorphism.
\end{lem}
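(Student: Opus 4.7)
The plan is to treat the two implications separately. The direction $(\Leftarrow)$ is immediate: if $s : C_1 \to B_1$ is a section of $g_1$, then commutativity of the right square gives $w = w g_1 s = g_2 v s$, so $v s : C_1 \to B_2$ witnesses the factorization of $w$ through $g_2$.

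For the substantive direction $(\Rightarrow)$, suppose $w = g_2 t$ for some $t : C_1 \to B_2$, and let $r : A_2 \to A_1$ be a retraction of the split monomorphism $u$. The goal is to construct a retraction of $f_1$; since the top row is short exact, the existence of such a retraction is equivalent to $g_1$ being a split epimorphism. The key device will be the ``error term'' $v - t g_1 : B_1 \to B_2$. Using the right-square commutativity $g_2 v = w g_1$ together with $w = g_2 t$, one checks that $g_2 ( v - t g_1 ) = 0$. Since the bottom row is exact at $A_2$ and at $B_2$, the arrow $f_2$ identifies $A_2$ with the kernel of $g_2$, so $v - t g_1 = f_2 \alpha$ for a unique $\alpha : B_1 \to A_2$.

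Now precompose this equation with $f_1$: on the one hand $( v - t g_1 ) f_1 = v f_1 - t ( g_1 f_1 ) = v f_1$, since the top row is a complex; on the other hand the left-square commutativity gives $v f_1 = f_2 u$. Hence $f_2 \alpha f_1 = f_2 u$, and cancelling the monomorphism $f_2$ yields $\alpha f_1 = u$. Composing with $r$ gives $( r \alpha ) f_1 = r u = \mathrm{id}_{A_1}$, so $r \alpha : B_1 \to A_1$ is the sought retraction of $f_1$. The top short exact sequence therefore splits, which is equivalent to $g_1$ being a split epimorphism.

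There is no real obstacle; the only thing to keep straight is the asymmetric role of the two rows. The bottom row is used solely through its left exactness, so that $A_2$ can be identified with $\Ker g_2$ and $f_2$ is monic, while the top row must be genuinely short exact in order for a retraction of $f_1$ to translate into a splitting of $g_1$. These two features, together with the split-mono hypothesis on $u$, are exactly what the construction consumes.
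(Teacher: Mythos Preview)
Your proof is correct and follows essentially the same approach as the paper: both form the error term $v - tg_1$, factor it through $f_2$ via exactness to obtain a map $\alpha$ (the paper's $s$), and then deduce $\alpha f_1 = u$, from which the splitting of $f_1$ follows. The only cosmetic difference is that you explicitly compose with the retraction $r$ of $u$ to exhibit $r\alpha$ as a retraction of $f_1$, whereas the paper simply notes that $u = s f_1$ being split monic forces $f_1$ to be split monic.
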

\begin{proof} If $g_1$ is a split epimorphism, then there is a morphism $s: C_1\rightarrow B_1$ such that $g_1s=id_{C_1}$. So $g_2vs=wg_1s=w$. Hence, $w$ can factor through $g_2$.

Conversely, suppose $w=g_2t$.
$$
\xymatrix{0\ar[r]&A_1\ar[r]^{f_1}\ar[d]^u&B_1\ar[r]^{g_1}\ar[d]^v\ar@{-->}[dl]_s&C_1\ar@{-->}[dl]_t\ar[d]^w\ar[r]&0\\
          0\ar[r]&A_2\ar[r]^{f_2}&B_2\ar[r]^{g_2}&C_2}
$$

Then $g_2(v-tg_1)=g_2v-g_2tg_1=g_2v-wg_1=0$. So $v-tg_1$ can factor through $f_2$. i.e. there is a morphism $s: B_1\rightarrow A_2$ such that $f_2s=v-tg_1$. Hence $f_2(u-sf_1)=f_2u-f_2sf_1=f_2u-(v-tg_1)f_1=f_2u-vf_1=0$ which implies that $u=sf_1$. Since $u$ is a split monomorphism, $f_1$ is also a split monomorphism and hence $g_1$ is a split epimorphism. \end{proof}

\subsection{Neccessary conditions for the existence of minimal right determiners} Assume that a right minimal non-isomorphism: $f:X\rightarrow Y$ in $\mathcal C$ has a minimal right determiner $\mathcal D$. Since $\mathcal C$ is a $\Hom$-finite abelian category, by Theorem \ref{unique} and Remark \ref{D}, $\mathcal D$ is semi-strong. First, we are going to show that the simple functor corresponding to $D_i$ is finitely presented. Then from a finitely presented simple functor $(-,D_i)/\rad(-,D_i)$, we are going to construct a right almost split morphism ending in $D_i$, which reveals a local almost split structure.

\begin{lem}
Let  $\mathcal C$ is a hereditary $\Hom$-finite abelian  $k$-category. Suppose a morphism $f:X\rightarrow Y$ in $\mathcal C$ has a minimal right determiner $\mathcal D$. Then for each $D_i\in\mathcal D$, the simple functor $(-,D_i)/\rad(-,D_i)$ is finitely presented.
\end{lem}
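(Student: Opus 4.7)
My approach is to recognize $(-,D_i)/\rad(-,D_i)$ as a finitely generated subfunctor of the already-finitely-presented cokernel functor $\cok(-,f)$, and then invoke the local coherence property of the functor category over an abelian category (Lemma~\ref{coh}). The hereditary hypothesis will not actually be needed for this lemma; it is inherited from the standing assumption of the section.

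First I would observe that since $\mathcal C$ is $\Hom$-finite abelian, it is Krull-Schmidt and we are in the situation of Remark~\ref{D}(d), so Theorem~\ref{unique} applies to any minimal right determiner. Hence $\mathcal D$ is semi-strong: every nonzero indecomposable $D_i \in \mathcal D$ almost factors through $f$. By Lemma~\ref{aft}, the simple functor $S_i := (-,D_i)/\rad(-,D_i)$ is then a subfunctor of $\cok(-,f)$.

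Second I would assemble the finiteness ingredients. The defining exact sequence
$$(-,X) \xrightarrow{(-,f)} (-,Y) \longrightarrow \cok(-,f) \longrightarrow 0$$
exhibits $\cok(-,f)$ as finitely presented. Simple functors in $(\mathcal C\op, Ab)$ are finitely generated by Lemma~\ref{simple fun} (each is a quotient of some representable $(-,C)$), so $S_i$ is a finitely generated subfunctor of a finitely presented functor. Applying Lemma~\ref{coh} to the abelian category $\mathcal C$ then yields that $S_i$ is finitely presented, as desired.

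I do not expect a serious obstacle: the argument is a short chain of already-proven facts. The only step that deserves a moment of care is invoking semi-strongness, since the lemma's hypothesis is only that $\mathcal D$ is a minimal right determiner; this is legitimate because the $\Hom$-finite abelian setting automatically supplies the radical nilpotency needed for Theorem~\ref{unique}. Everything else is a direct application of Auslander's coherence lemma.
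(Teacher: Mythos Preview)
Your proof is correct and follows essentially the same route as the paper's: show $\mathcal D$ is semi-strong (via Theorem~\ref{unique} and Remark~\ref{D}(d)), embed the simple functor $(-,D_i)/\rad(-,D_i)$ into $\cok(-,f)$ using Lemma~\ref{aft}, and then apply Lemma~\ref{coh} since simple functors are finitely generated and $\cok(-,f)$ is finitely presented. Your observation that the hereditary hypothesis is not used in this particular lemma is also accurate.
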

\begin{proof}
%

Since $\mathcal D$ is semi-strong, each $D_i\in\mathcal D$ can almost factor through $f$. By Lemma \ref{aft}, $(-,D_i)/\rad(-,D_i)$ is a subfunctor of $\cok(-,f)$. 

We know that simple functors are finitely generated and $\cok(-,f)$ is finitely presented. Since $\mathcal C$ is an abelian category, by Lemma \ref{coh}, $(-,D_i)/\rad(-,D_i)$ is finitely presented. \end{proof}
\vskip5pt

 Hence there exists a minimal projective presentation of $(-,D_i)/\rad(-,D_i)$:

$$
(-,M)\stackrel{(-,t)}\rightarrow (-,D_i)\rightarrow (-,D_i)/\rad(-,D_i)\rightarrow 0.
$$

It follows that $M\stackrel{t}\rightarrow D_i$ is a minimal right almost split morphism. By Lemma \ref{dich}, it splits into two cases: (1) $D_i$ is non-projective and $t$ is an epimorphism. (2) $D_i$ is projective and $t$ is a monomorphism. We are going to discuss each of them separately.

Suppose $D_i=N$ is non-projective. We have an exact sequence:
$$
0\rightarrow L\stackrel{s}\rightarrow M\stackrel{t}\rightarrow N\rightarrow 0.
$$
By Corollary \ref{ass1}, it is an almost split sequence and in particular, $L$ is indecomposable


Since $N$ can almost factor through $f$, there is a morphism $w: N\rightarrow Y$ such that $w$ cannot factor through $f$, but $wt$ factors through $f$. Hence we have the following commutative diagram:

$$
\xymatrix{0\ar[r]&L\ar[r]^s\ar[d]^u&M\ar[r]^t\ar[d]^v  &N\ar[r]\ar[d]^w &0\\
          0\ar[r]&\Ker f\ar[r]^i&X\ar[r]^f&Y}
$$

\begin{lem}\label{ker|}
In the commutative diagram above, $L$ is a direct summand of $\Ker f$.
 \end{lem}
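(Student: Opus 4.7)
The plan is to show that the vertical morphism $u: L \to \Ker f$ is a split monomorphism; since $L$ is indecomposable (as the left-hand term of an almost split sequence), this will identify $L$ as a direct summand of $\Ker f$.

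I would argue by contradiction: assume that $u$ is not a split monomorphism. Since the top row is an almost split sequence, $s: L \to M$ is left almost split, so the hypothesis would produce some $h: M \to \Ker f$ with $hs = u$. My next step is to consider the morphism $v - ih : M \to X$. Using the commutativity of the left square ($vs = iu$), the composition
\[
(v - ih)s \;=\; vs - ihs \;=\; iu - iu \;=\; 0
\]
vanishes, so $v - ih$ must factor through the cokernel $t$ of $s$, giving some $w' : N \to X$ with $v - ih = w't$. Composing with $f$ and using $fi = 0$ together with the commutativity of the right square ($fv = wt$) would yield
\[
fw't \;=\; f(v - ih) \;=\; fv \;=\; wt.
\]
Since $t$ is an epimorphism, right cancellation gives $fw' = w$, contradicting the hypothesis that $w$ does not factor through $f$. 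This would force $u$ to be a split monomorphism, identifying $L$ as a direct summand of $\Ker f$.

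I do not anticipate any real obstacle here: the only place where specific structure is genuinely needed is in invoking the left almost split property of $s$ to produce the factorization $u = hs$. The rest is a straightforward diagram chase exploiting that $t = \cok s$ and that $t$ is an epimorphism. One could alternatively try to invoke Lemma \ref{pullback} directly, but its formulation assumes $u$ is already split mono, so it gives the wrong implication for this situation; running the argument by hand via the left almost split property is cleaner.
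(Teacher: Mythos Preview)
Your proof is correct and follows essentially the same route as the paper's own argument: assume $u$ is not a split monomorphism, use the left almost split property of $s$ to factor $u$ through $s$, then chase the diagram to produce a factorization of $w$ through $f$, contradicting the standing hypothesis. The paper's notation differs only superficially (it writes $\beta$ for your $h$ and $\gamma$ for your $w'$), and its closing reference to Lemma~\ref{pullback} is essentially just the observation that $w$ was known not to factor through $f$.
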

 \begin{proof} Otherwise, $u$ is not a split monomorphism. Since $s$ is left almost split, there is a morphism $\beta:M\rightarrow \Ker f$ such that $\beta s=u$. Since $(v-i\beta)s=0$, there is a morphism $\gamma: N\rightarrow X$ such that $v-i\beta=\gamma t$. Now, $wt=fv=f(\gamma t+i\beta)=f\gamma t$, and since $t$ is an epimorphism, it follows that $w=f\gamma$ which contradicts with Lemma \ref{pullback} \end{proof}


 Suppose $D_i=P$ is projective. Because $t: M\rightarrow P$ is minimal right almost split monomorphism, by Lemma \ref{mras mono}, $P$ is the projective cover of the simple object $S\simeq\cok t$. 

\begin{lem}
The simple object $S$ is a subobject of $\cok f$.
\end{lem}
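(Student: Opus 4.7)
The plan is to realize $S$ as a subobject of $\cok f$ via an elementary diagram chase, using the almost-factorization of $P$ through $f$.

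First, I would extract a suitable test morphism. Since $\mathcal D$ is semi-strong (as observed after the Lemma in the previous section by appealing to Theorem~\ref{unique} and Remark~\ref{D}), the projective summand $P = D_i$ can almost factor through $f$. Thus there exists $w: P \to Y$ that does not factor through $f$, while for every $h \in \rad_{\mathcal C}(U,P)$ the composition $wh$ factors through $f$. In particular, since $t: M \to P$ is the minimal right almost split morphism ending at $P$ and is not a split epimorphism, $t$ lies in $\rad_{\mathcal C}(M,P)$, so $wt$ factors through $f$: write $wt = fv$ for some $v: M \to X$.

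Next, I would perform the diagram chase. Let $\pi: P \to S$ be the cokernel of $t$ (so $S \simeq \cok t$) and let $\rho: Y \to \cok f$ be the cokernel of $f$. The identity $\rho w t = \rho f v = 0$ together with $\pi$ being the cokernel of $t$ produces a unique morphism $\bar w : S \to \cok f$ with $\bar w \pi = \rho w$, realized in the diagram
$$
\xymatrix{0\ar[r]&M\ar[r]^t\ar[d]^v  &P\ar[r]^\pi\ar[d]^w &S\ar[r]\ar@{-->}[d]^{\bar w} &0\\
          0\ar[r]&X\ar[r]^f&Y\ar[r]^\rho &\cok f.}
$$

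It remains to show $\bar w \neq 0$. If $\bar w = 0$, then $\rho w = \bar w \pi = 0$, which means $w$ factors through $f$, contradicting the choice of $w$. Hence $\bar w$ is a non-zero morphism out of the simple object $S$, and therefore a monomorphism, exhibiting $S$ as a subobject of $\cok f$. The argument is essentially a one-step diagram chase; the only subtle point is extracting $v$, which relies precisely on the almost-factorization property of $P$ and on $t \in \rad_{\mathcal C}(M,P)$, both of which are already in hand.
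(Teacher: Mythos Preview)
Your proof is correct and follows essentially the same approach as the paper: both use the almost-factorization property of $P$ to produce a map $P\to Y$ whose composition with $t$ (equivalently, with $\iota:\rad P\hookrightarrow P$) factors through $f$, then pass to cokernels to obtain a nonzero map $S\to\cok f$. One small point: the step ``$\rho w=0$ implies $w$ factors through $f$'' uses that $P$ is projective (so that $w$ lifts along $X\twoheadrightarrow\Ima f$), and the leftmost $0$ in your bottom row should be dropped since $f$ need not be a monomorphism.
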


\begin{proof} For each indecomposable projective object $P\in \mathcal D$, since $P$ can almost factor through $f$, there is a morphism $\alpha$ which cannot factor through $f$ such that the composition $\alpha \iota$ factors through $f$.
$$
\xymatrix{0\ar[r]&\rad P\ar[r]^\iota\ar@{-->}[d]&P\ar[r]\ar[d]^\alpha&S\ar[r]\ar[d]^s&0\\
           &X\ar[r]^f&Y\ar[r]&\cok f\ar[r]&0.}
$$
This induces a morphism $s: S\rightarrow \cok f$ where $S\simeq P/\rad P$ is a simple object. Because $\alpha$ cannot factor through $f$, the morphism $s\neq0$. Hence it must be a monomorphism. i.e. $S$ is a simple subobject of $\cok f$. \end{proof}

\vskip10pt

To summarize, so far we obtain the following consequences:

\begin{prop}\label{local ar}
Let $f$ be a right minimal non-isomorphism in $\mathcal C$ having a minimal right determiner $\mathcal D$. Then
\begin{enumerate}
\item For each non-projective object $N\in \mathcal D$, $N$ is the ending term of an almost split sequence:
$$
0\rightarrow L\rightarrow M\rightarrow N\rightarrow 0
$$
and $L$ is a direct summand of $\Ker f$.
\item For each projective object $P\in \mathcal{D}$, $P$ is the projective cover of some simple object $S\subseteq\cok(f)$.
\item If $f$ is a monomorphism, then $\mathcal D$ contains only projective objects.
\item If $f$ is an epimorphism, then $\mathcal D$ contains only non-projective objects.
\end{enumerate}
\end{prop}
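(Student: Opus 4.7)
The proposal is to assemble parts (1) and (2) from the lemmas and constructions developed immediately before the proposition statement, and then derive (3) and (4) as quick consequences of (1) and (2).

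For part (1), I would start from the fact that $\mathcal D$ is semi-strong (since $\mathcal C$ is $\Hom$-finite abelian, hence falls under case (d) of Remark \ref{D}, so Theorem \ref{unique} applies). Thus each indecomposable $N \in \mathcal D$ can almost factor through $f$, and by Lemma \ref{aft} the simple functor $(-,N)/\rad(-,N)$ is a subfunctor of $\cok(-,f)$. Since $\cok(-,f)$ is finitely presented and $\mathcal C$ is abelian, Lemma \ref{coh} shows $(-,N)/\rad(-,N)$ is finitely presented, yielding a minimal projective presentation $(-,M)\xrightarrow{(-,t)}(-,N)\to(-,N)/\rad(-,N)\to 0$. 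The morphism $t:M\to N$ is then minimal right almost split. Applying Lemma \ref{dich}, since $N$ is assumed non-projective, $t$ must be an epimorphism, and by Proposition \ref{ass1} this yields the desired almost split sequence $0\to L\to M\to N\to 0$. The claim that $L$ is a direct summand of $\Ker f$ is exactly Lemma \ref{ker|}.

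For part (2), when $P\in \mathcal D$ is projective the same construction produces a minimal right almost split morphism $t:M\to P$. By Lemma \ref{dich}, $t$ must be a monomorphism, and Lemma \ref{mras mono} then tells us $S:=\cok t$ is simple and $P$ is its projective cover. To realize $S$ as a subobject of $\cok f$, I would use that $P$ almost factors through $f$ via some $\alpha:P\to Y$: the restriction $\alpha\iota$ (where $\iota:\rad P\hookrightarrow P$) factors through $f$, so passing to cokernels induces $s:S\to \cok f$. Nonvanishing of $s$ follows since $\alpha$ itself does not factor through $f$, and simplicity of $S$ forces $s$ to be a monomorphism. (This is precisely the short computation already sketched in the lemma preceding the proposition.)

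Parts (3) and (4) are then immediate corollaries. If $f$ is a monomorphism, then $\Ker f = 0$, so by (1) any non-projective object $N \in \mathcal D$ would sit in an almost split sequence with $L$ a direct summand of $0$, forcing $L=0$; but an almost split sequence has a nonzero left-hand term, contradiction. Hence $\mathcal D$ consists entirely of projective objects. Dually, if $f$ is an epimorphism, then $\cok f = 0$ has no simple subobjects, so by (2) $\mathcal D$ can contain no projective objects.

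The main obstacle is really parts (1) and (2), and in particular verifying that the minimal right almost split morphism into $N$ (respectively $P$) produced by the finitely presented simple functor is ``compatible'' with $f$ in the strong way that Lemma \ref{ker|} requires; this hinges on the careful diagram chase in that lemma, which uses the hereditary hypothesis only implicitly through the setup of $\cok(-,f)$ being finitely presented and its subfunctors behaving coherently. Parts (3) and (4) contain no real content beyond (1) and (2).
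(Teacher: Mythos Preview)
Your proposal is correct and follows essentially the same route as the paper: the proposition is stated there as a summary of the preceding lemmas (finite presentation of the simple functor via Lemma \ref{coh}, the dichotomy of Lemma \ref{dich}, Lemma \ref{ker|} for part (1), and the short diagram argument for $S\subseteq\cok f$ in part (2)), with (3) and (4) implicit consequences exactly as you derive them.
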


Notice that $\Ker f$ only has finitely many indecomposable summands. So we have the following useful Corollary.
\begin{cor}\label{ker finite}
 A minimal right determiner $\mathcal D$ contains only finitely many isomorphism classes of indecomposable non-projective  objects.
 \end{cor}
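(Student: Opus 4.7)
The plan is to exploit Proposition \ref{local ar}(1), which attaches to every non-projective indecomposable $N\in\mathcal D$ an almost split sequence $0\to L_N\to M_N\to N\to 0$ whose starting term $L_N$ is a direct summand of $\Ker f$. Since $\mathcal C$ is $\Hom$-finite abelian and therefore Krull-Schmidt, $\Ker f$ decomposes into a finite direct sum of indecomposables; let $L_1,\ldots,L_m$ be a complete list of pairwise non-isomorphic indecomposable summands of $\Ker f$. Thus $L_N$ is isomorphic to one of the $L_i$.

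The next step is to observe that the assignment $N\mapsto L_N$ is injective on isomorphism classes of non-projective indecomposables admitting an almost split sequence on the right. This is exactly the usual uniqueness of almost split sequences: the sequence is determined up to isomorphism by either of its end-terms, so $L_N\cong L_{N'}$ forces $N\cong N'$. (One can phrase this via $N=\tau^- L_N$, but the functorial description in Subsection \ref{ars def}, Proposition \ref{ass1}, already gives uniqueness directly.)

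Combining these two steps, the set of isomorphism classes of non-projective indecomposables in $\mathcal D$ embeds into $\{L_1,\ldots,L_m\}$, hence has at most $m$ elements. This is exactly the claim of the corollary.

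There is no serious obstacle here: the corollary is a direct bookkeeping consequence of Proposition \ref{local ar}(1) together with the Krull--Schmidt decomposition of $\Ker f$. The only thing to be careful about is that the argument bounds only the non-projective part of $\mathcal D$; projective indecomposables in $\mathcal D$ correspond instead to simple subobjects of $\cok f$ (Proposition \ref{local ar}(2)) and need not be finite in number without further hypotheses, which is why the statement is restricted to non-projective objects.
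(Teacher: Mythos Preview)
Your proof is correct and follows exactly the approach the paper intends: the paper states the corollary immediately after the observation ``Notice that $\Ker f$ only has finitely many indecomposable summands,'' leaving the reader to supply the injectivity of $N\mapsto L_N$ via the uniqueness of almost split sequences, which you have spelled out explicitly.
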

 
 This fact gives rise to a typical way to analyze indecomposable objects in a minimal right determiner which will be demonstrated in the following Lemmas.
 
\begin{lem}\label{non-proj finite}
 Let $f:X\rightarrow Y$ be a morphism having a minimal right determiner $\mathcal D$. Suppose $$
\cdots Z_2\stackrel{\alpha_2}\rightarrow Z_1\stackrel{\alpha_1}\rightarrow Y
$$  is an infinite sequence of morphisms with $Z_i$'s indecomposable objects in $\mathcal D$ and $\alpha_i$'s are non-isomorphisms such that $\alpha_1\alpha_2\cdots\alpha_k\neq 0$ for any $k$. Then there is an integer $N$ such that $Z_i$ are all projective for $i>N$.
 \end{lem}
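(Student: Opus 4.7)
The plan is to argue by contradiction using Corollary \ref{ker finite} and the nilpotence of the radical of endomorphism rings in a $\Hom$-finite Krull-Schmidt category. Suppose infinitely many $Z_i$ were non-projective. By Corollary \ref{ker finite}, the minimal right determiner $\mathcal D$ contains only finitely many isomorphism classes of indecomposable non-projective objects, so by pigeonhole some non-projective indecomposable $N$ must occur as $Z_{i_j}$ for an infinite increasing subsequence $i_1<i_2<\cdots$.

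Next, I would look at the composed ``chunks'' between consecutive occurrences of $N$, namely
$$\beta_j := \alpha_{i_j+1}\circ\alpha_{i_j+2}\circ\cdots\circ\alpha_{i_{j+1}} : Z_{i_{j+1}}\To Z_{i_j}.$$
After fixing isomorphisms $Z_{i_j}\simeq N$, each $\beta_j$ is an element of $\End(N)$. The key observation is that each $\beta_j$ lies in $\rad\End(N)$: since $\mathcal C$ is Krull-Schmidt, every non-isomorphism between indecomposable objects belongs to the categorical radical $\rad(\mathcal C)$, and because $\rad(\mathcal C)$ is a two-sided ideal, the composition $\beta_j$ lies in $\rad(\mathcal C)(N,N)=\rad\End(N)$. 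Consequently
$$\beta_1\circ\beta_2\circ\cdots\circ\beta_n \in (\rad\End(N))^{n}\quad\text{for all }n\geq 1.$$

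Now I would invoke $\Hom$-finiteness: because $N$ is indecomposable in a $\Hom$-finite $k$-category, $\End(N)$ is a finite-dimensional local $k$-algebra, so $\rad\End(N)$ is nilpotent. Choosing $n$ with $(\rad\End(N))^n=0$, the factorization
$$\alpha_1\circ\alpha_2\circ\cdots\circ\alpha_{i_{n+1}} \;=\; (\alpha_1\circ\cdots\circ\alpha_{i_1})\circ\beta_1\circ\cdots\circ\beta_n$$
forces this composition to vanish, contradicting the hypothesis that $\alpha_1\cdots\alpha_k\neq 0$ for every $k$. Therefore only finitely many $Z_i$ can be non-projective, giving the desired integer $N$.

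The main obstacle I anticipate is the clean verification that each $\beta_j$ lies in $\rad\End(N)$ — one has to observe carefully that the intermediate objects $Z_m$ with $i_j<m<i_{j+1}$ need not be isomorphic to $N$, so the argument is not simply ``a product of radical endomorphisms is radical,'' but rather uses that any morphism between indecomposables which is not an isomorphism is in the categorical radical and that this radical is an ideal. Once this is in place, the rest is a short application of nilpotence.
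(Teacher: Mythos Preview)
Your proof is correct and follows essentially the same approach as the paper: pigeonhole via Corollary~\ref{ker finite} to find a repeating non-projective $N$, then contradict the nilpotence of $\rad\End(N)$ coming from $\Hom$-finiteness. The paper's proof is terser and simply asserts that repetition of $N$ forces $\rad\End(N)$ to be non-nilpotent; your write-up makes this explicit by isolating the composed chunks $\beta_j$ and checking they lie in the categorical radical, which is exactly the point one needs.
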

 \begin{proof}
 Suppose there are infinitely many non-projective $Z_i$'s in such a sequence of morphisms. Since $\mathcal D$ only contains finitely many non-isomorphic indecomposable non-projective objects, there must be an indecomposable object $N\in\mathcal D$ such that $Z_j\simeq N$ for infinitely many $j$, which means $\rad\End(Z)$ is not nilpotent. This contradicts with the category $\mathcal C$ being $\Hom$-finite. 
 \end{proof}
 
 \begin{lem} \label{extend to aft}
Let $f:X\rightarrow Y$ be a morphism having a minimal right determiner $\mathcal D$. Suppose there is a commutative diagram
$$
\xymatrix{U\ar[d]^s\ar@{->>}[r]^t&V\ar[d]^g\\X\ar[r]^f&Y}
$$
where $t$ is an epimorphism and $g$ cannot factor through $f$. Then 
\begin{enumerate}
\item For any projective object $P$ and any morphism $p:P\rightarrow V$, $gp$ factors through $f$.
\item There is an indecomposable non-projective object $Z\in\mathcal D$ and a morphism $\alpha: Z\rightarrow V$ such that $g\alpha$ can almost factor through $f$.
\end{enumerate}
 \end{lem}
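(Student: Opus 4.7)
The plan is to handle the two claims separately and lean on the tools that have already been proved in the paper. Part (1) should be essentially an elementary observation about projective lifting, while part (2) will require combining the ``right determiner'' machinery (Corollary \ref{contrapositive2}), the iterated construction of Lemma \ref{chain}, and the finiteness principle of Lemma \ref{non-proj finite} in order to force termination.

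For (1), given any projective $P$ and $p:P\rightarrow V$, since $t$ is an epimorphism and $P$ is projective, there exists a lift $\tilde p:P\rightarrow U$ with $t\tilde p=p$. Then by the commutativity of the square,
\[
gp \;=\; gt\tilde p \;=\; fs\tilde p,
\]
so $gp$ factors through $f$.

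For (2), since $g$ cannot factor through $f$ and $\mathcal D$ is a right determiner, Corollary \ref{contrapositive2} yields an indecomposable $Z_1\in\mathcal D$ and a morphism $\alpha_1:Z_1\rightarrow V$ such that $g\alpha_1$ does not factor through $f$. By part (1), $Z_1$ must be non-projective, for if $Z_1$ were projective we could factor $g\alpha_1$ through $f$, a contradiction. If $g\alpha_1$ already almost factors through $f$ we are done. Otherwise, $g\alpha_1$ neither factors through $f$ nor almost factors through $f$, so Lemma \ref{chain}(1) yields an indecomposable $Z_2\in\mathcal D$ and a non-isomorphism $\beta_1:Z_2\rightarrow Z_1$ such that $g\alpha_1\beta_1$ does not factor through $f$; again part (1) forces $Z_2$ to be non-projective. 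Iterating produces either a stage at which $g\alpha_1\beta_1\cdots\beta_k$ almost factors through $f$ (in which case set $Z=Z_{k+1}$ and $\alpha=\alpha_1\beta_1\cdots\beta_k$, giving the desired morphism), or an infinite sequence of non-isomorphisms
\[
\cdots\longrightarrow Z_3\xrightarrow{\beta_2}Z_2\xrightarrow{\beta_1}Z_1\xrightarrow{g\alpha_1}Y
\]
whose partial composites are all non-zero (otherwise they would factor through $f$) and whose terms $Z_i$ are all non-projective.

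The main obstacle is to rule out this infinite case, and this is exactly where Lemma \ref{non-proj finite} applies: it asserts that any such infinite sequence of non-isomorphisms in $\mathcal D$ with non-zero partial composites must eventually consist of projective objects. This directly contradicts the conclusion just obtained that every $Z_i$ is non-projective. Hence the iteration must terminate, producing the indecomposable non-projective $Z\in\mathcal D$ and the morphism $\alpha:Z\rightarrow V$ such that $g\alpha$ almost factors through $f$, as required.
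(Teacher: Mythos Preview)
Your proof is correct and follows essentially the same approach as the paper: part (1) is identical, and for part (2) both arguments build a chain of non-isomorphisms in $\mathcal D$ via Lemma \ref{chain}, use part (1) to rule out projective terms, and invoke Lemma \ref{non-proj finite} for the contradiction. The only organizational difference is that the paper first reduces to $V$ indecomposable and phrases the contradiction through the notion of absolute non-factorization, whereas you build the chain directly starting from Corollary \ref{contrapositive2}; this lets you avoid the reduction step but is otherwise the same argument.
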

 \begin{proof}
 $(1)$ Since $t$ is an epimorphism, $p$ factors through $t$. Therefore, $gp$ factors through $f$.\\
 $(2)$ Without loss of generality, we can assume that $V$ is indecomposable. Suppose $g$ is an absolute non-factorization of $f$. Then by Lemma \ref{chain}, there is a sequence of of non-isomorphisms~$\alpha_i$:
$$
\cdots Z_2\stackrel{\alpha_2}\rightarrow Z_1\stackrel{g\alpha_1}\rightarrow Y
$$
where $Z_i$'s are indecomposable objects in $\mathcal D$, such that the morphism $g\alpha_1\cdots\alpha_n$ cannot factor through $f$ for $\forall n$. By Lemma \ref{non-proj finite}, there is an indecomposable projective $Z_N\in\mathcal D$ and a morphism $\alpha=\alpha_1\cdots\alpha_N: Z_N\rightarrow V$ such that $g\alpha$ cannot factor through $f$, which contradicts (1). Hence there is an indecomposable object $Z$ and a morphism $\alpha:Z\rightarrow V$ such that $g\alpha$ almost factors through $f$. Then because $g\alpha$ cannot factor through $f$, due to (1) again, $Z$ is non-projective. 
 \end{proof}

\begin{prop}\label{non-strong chain}
  If $f:X\rightarrow Y$ has a minimal right determiner $\mathcal D$ which is not strong, then there is an infinite sequence of morphisms 
 $$
\cdots P_3\stackrel{\alpha_3}\rightarrow P_2\stackrel{\alpha_2}\rightarrow P_1\stackrel{\alpha_1}\rightarrow Y
$$
with $P_i$'s indecomposable projective objects in $\mathcal D$ and $\alpha_i$'s are proper monomorphisms ($i>1$) such that $\alpha_1\alpha_2\cdots\alpha_k$ cannot factor through $f$ for any $k$. In particular $\mathcal D$ contains infinitely many indecomposable objects.
\end{prop}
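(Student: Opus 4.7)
The plan is to combine Lemmas \ref{chain}(2) and \ref{non-proj finite} to produce an infinite chain of non-isomorphisms whose tail lives among indecomposable projectives, and then to upgrade those non-isomorphisms to proper monomorphisms using the hereditary hypothesis.

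\textbf{Step 1 (building the chain).} Since $\mathcal D$ is not strong, fix an absolute non-factorization $g:X'\to Y$. Applying Lemma \ref{chain}(2) to $g$ yields an infinite sequence of non-isomorphisms
$$
\cdots\to W_2\stackrel{\gamma_2}\to W_1\stackrel{\gamma_1}\to X'
$$
with $W_i\in\mathcal D$ indecomposable, such that $g\gamma_1\gamma_2\cdots\gamma_k$ does not factor through $f$ (hence in particular is nonzero) for every $k\ge 1$. Feeding the induced sequence $\cdots\to W_2\to W_1\stackrel{g\gamma_1}\to Y$ into Lemma \ref{non-proj finite} gives an integer $N$ with $W_i$ projective for $i>N$. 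Relabel: set $P_k:=W_{N+k}$, $\alpha_1:=g\gamma_1\gamma_2\cdots\gamma_{N+1}:P_1\to Y$, and $\alpha_k:=\gamma_{N+k}:P_k\to P_{k-1}$ for $k\ge 2$. Then each $P_k$ is an indecomposable projective in $\mathcal D$, and $\alpha_1\alpha_2\cdots\alpha_k=g\gamma_1\cdots\gamma_{N+k}$ does not factor through $f$.

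\textbf{Step 2 (non-isomorphisms become proper monomorphisms).} For $k\ge 2$ the map $\alpha_k$ is a non-isomorphism by construction and is nonzero, because otherwise $\alpha_1\alpha_2\cdots\alpha_k=0$ would factor through $f$. Here the hereditary hypothesis is decisive: since every subobject of a projective is projective, both $\Ker\alpha_k$ and $\Ima\alpha_k\subseteq P_{k-1}$ are projective, so the short exact sequence
$$
0\to\Ker\alpha_k\to P_k\to\Ima\alpha_k\to 0
$$
splits, giving $P_k\cong\Ker\alpha_k\oplus\Ima\alpha_k$. Indecomposability of $P_k$ combined with $\Ima\alpha_k\neq 0$ forces $\Ker\alpha_k=0$, so $\alpha_k$ is a monomorphism; being a non-isomorphism, it is proper.

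\textbf{Step 3 (infinitely many indecomposables).} Suppose toward contradiction that $\psi:P_i\stackrel{\sim}\to P_j$ is an isomorphism for some $i<j$, and set $\phi:=\alpha_{i+1}\alpha_{i+2}\cdots\alpha_j:P_j\to P_i$. Then $\phi$ is a composition of monomorphisms, hence mono, but $\Ima\phi\subseteq\Ima\alpha_{i+1}\subsetneq P_i$, so $\phi$ is not epi and therefore not an isomorphism. Hence $\psi\phi\in\End(P_j)$ is a non-iso monomorphism; by $\Hom$-finiteness and indecomposability, $\End(P_j)$ is a local finite-dimensional $k$-algebra with nilpotent Jacobson radical, so $\psi\phi$ would be nilpotent, impossible for a nonzero monomorphism. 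Thus the $P_k$'s are pairwise non-isomorphic and $\mathcal D$ contains infinitely many indecomposable objects.

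\textbf{Main obstacle.} The conceptually central step is Step 2: the proper-monomorphism conclusion uses the hereditary hypothesis in an essential way, through both ``subobjects of projectives are projective'' and ``short exact sequences ending in a projective split''. The rest is bookkeeping on top of Lemmas \ref{chain} and \ref{non-proj finite}. A secondary care point is that the non-factorization property has to be preserved under prefix composition with $g$, which rests on $g$ being an \emph{absolute} non-factorization, applied to indecomposable $W_k$'s.
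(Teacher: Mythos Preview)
Your proof is correct and follows essentially the same route as the paper's: invoke Lemma~\ref{chain}(2) on an absolute non-factorization, truncate via Lemma~\ref{non-proj finite} so the tail consists of projectives, then use hereditariness to upgrade the non-isomorphisms between indecomposable projectives to proper monomorphisms. Your Steps~2 and~3 spell out what the paper leaves implicit (the paper simply asserts ``Since $\mathcal C$ is hereditary, $\alpha_i$'s are proper monomorphisms'' and ``Hence $\mathcal D$ contains infinitely many indecomposable objects''), and your justifications for both are sound.
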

\begin{proof} Since $\mathcal D$ is not strong, there is a morphism $f': X^\prime\rightarrow Y$ which is an absolute non-factorization of $f$ . By Lemma \ref{chain}, there is an infinite sequence of non-isomorphisms $\varphi_i$:
$$
\cdots Z_2\stackrel{\varphi_2}\rightarrow Z_1\stackrel{f^\prime\varphi_1}\rightarrow Y
$$
with $Z_i$'s indecomposable objects in $\mathcal D$ such that $f'\varphi_1\cdots\varphi_k$ cannot factor through $f$ for all $k$. By Corollary \ref{non-proj finite}, there is an integer $N$ such that $Z_i$ is projective when $i>N$. Take $Z_{N+i}=P_i$, we have a sequence of morphisms:
  $$
\cdots P_3\stackrel{\alpha_3}\rightarrow P_2\stackrel{\alpha_2}\rightarrow P_1\stackrel{\alpha_1}\rightarrow Y
$$
with $P_i$'s indecomposable projective objects in $\mathcal D$, $\alpha_i=\varphi_{N+i+1}$ for $i>1$ and $\alpha_1=f'\phi_1\cdots\phi_{N+1}$ are non-isomorphisms. Since $\mathcal C$ is hereditary, $\alpha_i$'s are proper monomorphisms for $i>1$. Hence $\mathcal D$ contains infinitely many indecomposable objects. \end{proof}

\vskip10pt

Next, we are going to show that if $f$ has a minimal right determiner, then each indecomposable direct summand of $\Ker f$ and simple subobject of $\cok f$ should behave like what was described in Theorem \ref{local ar} (1) and (2).

\begin{prop}\label{ker}
Let $f:X\rightarrow Y$ be a right minimal non-monomorphism. If $f$ has a minimal right determiner $\mathcal D$ then each indecomposable direct summand $K$ of $\Ker f$ is the starting term of an almost split sequence.
\end{prop}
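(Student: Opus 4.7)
I would argue by contradiction: assume that $K$ is an indecomposable direct summand of $\Ker f$ which is not the starting term of any almost split sequence, and try to build the missing almost split sequence out of the determiner data. Since $K \subseteq \Ker f$, the map $f$ factors as $X \twoheadrightarrow X/K \xrightarrow{g'} Y$; right minimality of $f$ together with $K$ being a nonzero summand of $\Ker f$ forces the sequence $0 \to K \to X \to X/K \to 0$ to be non-split (otherwise $K$ would split off $X$ with $f|_K = 0$). Applying Lemma \ref{pullback} with the split mono $K \hookrightarrow \Ker f$, it follows that $g'$ does not factor through $f$.

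The next step is to iterate the determiner. Using Corollary \ref{contrapositive2} for the base case and Lemma \ref{chain}(1) for the inductive step, I construct a chain $\cdots \xrightarrow{\psi_3} Z_2 \xrightarrow{\psi_2} Z_1 \xrightarrow{\beta_1} X/K$ of indecomposable objects $Z_j \in \mathcal{D}$ and non-isomorphisms $\psi_j$ for which no composite $g'\beta_1\psi_2\cdots\psi_j$ factors through $f$. Lemma \ref{chain}(1) applies at each inductive step so long as the running composite does not already almost factor through $f$. Non-projectivity of each $Z_j$ is forced by Lemma \ref{extend to aft}(1), applied to the iterated pullback of $0 \to K \to X \to X/K \to 0$ along $\beta_1\psi_2\cdots\psi_{j-1}$: any lift of $\psi_j$ through this pullback would make $g'\beta_j$ factor through $f$. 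If the process never stopped, Lemma \ref{non-proj finite} would force some $Z_j$ to be projective, a contradiction. Hence the iteration halts at some stage $J$ where $g'\beta_1\psi_2\cdots\psi_J$ almost factors through $f$.

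The main work is to extract a contradiction from this stopped stage. Pulling back $0 \to K \to X \to X/K \to 0$ along $\beta_1\psi_2\cdots\psi_J : Z_J \to X/K$ produces a non-split short exact sequence $0 \to K \to E \to Z_J \to 0$. I claim $E \to Z_J$ is right almost split: for any non-isomorphism $u : U \to Z_J$ with $U$ indecomposable, the composite $g'\beta_1\psi_2\cdots\psi_J \circ u$ factors through $f$ by the almost-factorization property, and Lemma \ref{pullback} then forces the pullback of our sequence along $u$ to split, so $u$ lifts through $E \to Z_J$. Since $Z_J$ is indecomposable and non-projective, passing to the right minimal version $E_1 \to Z_J$ (with $E \simeq E_1 \oplus E_2$) gives a minimal right almost split epimorphism (Lemma \ref{dich}), and the resulting $0 \to L \to E_1 \to Z_J \to 0$ is an almost split sequence. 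The kernel of $E \to Z_J$ is $L \oplus E_2 \cong K$; since $K$ is indecomposable and $L = 0$ would make $E_1 \to Z_J$ an isomorphism (impossible for a right almost split morphism), one is forced into $E_2 = 0$ and $K \cong L$, so $K$ does start an almost split sequence, contradicting the hypothesis.

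The most delicate points will be the termination argument and the verification that $E \to Z_J$ is right almost split; both crucially use that $K$ is a summand of $\Ker f$ so that Lemma \ref{pullback} can be invoked throughout with a split mono on the left, and both rely on the interplay between the almost-factorization property at the stopped stage and the pullback behaviour of the starting sequence.
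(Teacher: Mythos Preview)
Your argument is correct and follows essentially the same route as the paper: build a non-split short exact sequence $0\to K\to X\to X/K\to 0$ mapping down to $0\to\Ker f\to X\xrightarrow{f} Y$, locate an indecomposable $Z$ and a morphism into $X/K$ whose composite with $g'$ almost factors through $f$, and pull back to obtain the almost split sequence, verifying right almost splitness via Lemma~\ref{pullback}. The paper streamlines your presentation in two places: it invokes Lemma~\ref{extend to aft}(2) directly rather than rerunning the chain/termination argument inline, and at the end it appeals to Proposition~\ref{ass1}(3) (right almost split plus $\End(K)$ local) to conclude the pulled-back sequence is almost split, which makes your right-minimal-version detour and the contradiction framing unnecessary.
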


\begin{proof} 
Take the left minimal version (dual of right minimal version, see \ref{r minimal}) of the composition of the inclusions: $u_1: K\hookrightarrow \Ker f\hookrightarrow X$, we obtain a monomorphism: $s_1:K\hookrightarrow X_1$ where $X_1$ is the direct sum of all indecomposable summands of $X$ which intersects $K$ nontrivially. Hence, we have the following commutative diagram:

$$
\xymatrix{0\ar[r]&K\ar[r]^{s_1}\ar[d]^{u_1}&X_1\ar[r]^{t_1}\ar[d]^{v_1}&Y_1\ar[r]\ar[d]^{w_1}&0\\
          0\ar[r]&\Ker f\ar[r]^i&X\ar[r]^f&Y}
$$

By the assumption that $f$ is right minimal, the upper exact sequence does not split. Hence $w_1$ cannot factor through $f$ by Lemma \ref{pullback}.

By Lemma \ref{extend to aft},  there is an indecomposable object $Z\in \mathcal D$ and $\alpha:Z\rightarrow Y_1$ such that the composition $w_1\alpha$ almost factors through $f$.

%
%
%
%
%
%

Pull-back the morphism $\alpha$, we have the following commutative diagram:
 $$
 \xymatrix{\delta:&0\ar[r]&K\ar[r]^{s}\ar@{=}[d]&E\ar[r]^{t}\ar[d]&Z\ar[r]\ar[d]^{\alpha}&0\\
&0\ar[r]&K\ar[r]^{s_1}\ar[d]^{u_1}&E_1\ar[r]^{t_1}\ar[d]^{v_1}&Y_1\ar[r]\ar[d]^{w_1}&0\\
          &0\ar[r]&\Ker f\ar[r]^i&X\ar[r]^f&Y}
 $$
It suffice to show that $\delta$ is an almost split sequence. In fact, for each $\beta:V\rightarrow Z$ which is not a split epimorphism, taking the pull-back again we have:
$$
\xymatrix{0\ar[r]&K\ar@{=}[d]\ar[r]&U\ar[d]\ar[r]&V\ar[d]^\beta\ar[r]&0\\
0\ar[r]&K\ar[r]^{s}\ar@{=}[d]&E\ar[r]^{t}\ar[d]&Z\ar[r]\ar[d]^{\alpha}&0\\
0\ar[r]&K\ar[r]^{s_1}\ar[d]^{u_1 }&E_1\ar[r]^{t_1}\ar[d]^{v_1 }&Y_1\ar[r]\ar[d]^{w_1 }&0\\
          0\ar[r]&\Ker f\ar[r]^i&X\ar[r]^f&Y}
$$
Since $w_1\alpha \beta$ factors through $f$, by Lemma \ref{pullback} the upper exact sequence splits. Hence $\beta$ factors through $t$. Therefore $t$ is minimal right almost split and $\delta$ is an almost split sequence. \end{proof}

\vskip10pt

Next we consider the cokernel.  
First, we analyze the exact sequence involving subobjects of $\cok f$. 

\begin{lem}\label{cd}
Let $f:X\rightarrow Y$ be a right minimal non-epimorphism. For any subobject $C\subseteq \cok f$, there is a commutative diagram:
\begin{align}
\xymatrix{0\ar[r]&X\ar[r]^s\ar@{=}[d]&E\ar[r]^t\ar[d]^g&C\ar[r]\ar[d]^l&0\\
               &X\ar[r]^f\ar[r]&Y\ar[r]^-\pi&\cok f\ar[r]&0.} \tag{*}
\end{align}
where $l$ is the inclusion. 
\end{lem}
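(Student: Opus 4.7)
The plan is to construct the top row of $(*)$ via an extension-theoretic argument that crucially uses the heredity of $\mathcal C$. First I will factor $f = j\circ q$, where $q\colon X \twoheadrightarrow \Ima f$ is the canonical epimorphism and $j\colon \Ima f \hookrightarrow Y$ the inclusion, obtaining two short exact sequences
\[
0 \to \Ker f \to X \xrightarrow{q} \Ima f \to 0 \quad \text{and} \quad 0 \to \Ima f \xrightarrow{j} Y \xrightarrow{\pi} \cok f \to 0.
\]

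The second step is to pull back the latter sequence along $l\colon C \hookrightarrow \cok f$, producing an extension $0 \to \Ima f \to E' \to C \to 0$ together with a morphism of short exact sequences to the bottom row of $(*)$ whose left component is the identity on $\Ima f$, middle component is a monomorphism $E' \hookrightarrow Y$, and right component is $l$.

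The crux is to lift the resulting extension class $[E'] \in \Ext^1(C, \Ima f)$ along $q$ to an extension of $C$ by $X$. Applying $\Hom(C, -)$ to the sequence $0 \to \Ker f \to X \xrightarrow{q} \Ima f \to 0$ yields the long exact fragment
\[
\Ext^1(C, X) \xrightarrow{q_*} \Ext^1(C, \Ima f) \to \Ext^2(C, \Ker f),
\]
and the heredity of $\mathcal C$ forces $\Ext^2(C, \Ker f) = 0$, so $q_*$ is surjective. I then choose any preimage of $[E']$; by the standard pushforward description of $q_*$, this lift is realized by a short exact sequence $0 \to X \xrightarrow{s} E \xrightarrow{t} C \to 0$ equipped with a morphism of extensions to $0 \to \Ima f \to E' \to C \to 0$ whose left component is $q$, middle is some map $E \to E'$, and right is $\operatorname{id}_C$.

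Finally, splicing the two morphisms of extensions produces the desired diagram: I define $g\colon E \to Y$ as the composite $E \to E' \hookrightarrow Y$, and commutativity of the leftmost square reduces to $g\circ s = j\circ q = f$, while the remaining squares commute by construction. The main subtlety I anticipate is the recognition that heredity is genuinely essential at this step — without $\Ext^2 = 0$ one cannot in general lift an extension class through an epimorphism — whereas the pullback, pushforward, and splicing operations themselves are formal once this obstruction vanishes.
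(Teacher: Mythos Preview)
Your proposal is correct and follows essentially the same route as the paper's proof: pull back the sequence $0\to\Ima f\to Y\to\cok f\to 0$ along $l$, then use heredity to lift the resulting extension from $\Ext^1(C,\Ima f)$ to $\Ext^1(C,X)$, and finally splice the two morphisms of extensions to obtain $g$. The only cosmetic difference is that you invoke heredity via the long exact sequence and the vanishing of $\Ext^2(C,\Ker f)$, whereas the paper cites the equivalent characterization that $\Ext^1(C,-)$ preserves epimorphisms.
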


\begin{proof}

 We have the following pull-back diagram:

$$
\xymatrix{\delta: &0\ar[r]&\Ima f\ar@{=}[d]\ar[r]&Y_0\ar[d]^{g_1}\ar[r]^p&C\ar@{^(->}[d]^l\ar[r]&0\\
&0\ar[r]&\Ima f\ar[r]^i&Y\ar[r]^-\pi&\cok f \ar[r]&0}
$$

Because $\mathcal C$ is hereditary, $\Ext^1(C,-)$ preserves the canonical epimorphism $j: X\rightarrow \Ima f$. i.e. there is an epimorphism of abelian groups $\Ext^1(C,j):\Ext^1(C,X)\rightarrow \Ext^1(C,\Ima f)$. So the exact sequence $\delta$ can be lifted to an exact sequence $\epsilon: 0\rightarrow X\rightarrow E\rightarrow C\rightarrow 0$ in $\Ext^1(X,C)$. Hence we have the following commutative diagram:

\begin{eqnarray*}
\begin{xy}
(-30,30)*+{\epsilon:}; (-20,30)*+{0}="z11";(0,30)*+{X}="x1"; (20,30)*+{E}="e0"; (40,30)*+{C}="s1"; (60,30)*+{0}="z12"; 
(0,15)*+{X}="x2"; (20,15)*+{Y_0}="y0"; (40,15)*+{C}="s2"; (60,15)*+{0}="z22"; 
(0,0)*+{X}="x3"; (20,0)*+{Y}="y"; (40,0)*+{\cok f}="cok"; (60,0)*+{0}="z32"; 
(10,10)*+{\Ima f}="Im1"; (10,-5)*+{\Ima f}="Im2";
{\ar "z11"; "x1"}; {\ar^s "x1"; "e0"}; {\ar^t "e0"; "s1"}; {\ar "s1"; "z12"}; 
{\ar "x2"; "y0"}; {\ar^p "y0"; "s2"}; {\ar "s2"; "z22"};
{\ar^<(0.3)f "x3"; "y"}; {\ar^\pi "y"; "cok"}; {\ar "cok"; "z32"};
{\ar@{=} "x1"; "x2"}; {\ar^{g_2} "e0";"y0"}; {\ar@{=} "s1";"s2"};
{\ar@{=} "x2"; "x3"}; {\ar^{g_1} "y0"; "y"}; {\ar@{^(->}^l "s2"; "cok"};
 {\ar_j "x2"; "Im1"}; {\ar_i "Im1"; "y0"};
  {\ar_j "x3"; "Im2"}; {\ar_i "Im2"; "y"};
  {\ar@{=} "Im1"; "Im2"};
\end{xy}
\end{eqnarray*}

Let $g=g_1g_2$. Then the first row and the third row give rise to the desired commutative diagram.
\end{proof}

\begin{rmk}
In the commutative diagram $(*)$ above, $g$ cannot factor through $\Ima f$.  
\end{rmk}

\vskip5pt
 We need the following useful Lemma: 

\begin{lem}\label{nonproj}
Suppose $f:X\rightarrow Y$ is a morphism in $\mathcal C$ and $\alpha: N\rightarrow Y$ can almost factor through $f$, where $N$ is an indecomposable non-projective object. Then $\Ima \alpha\subseteq \Ima f$.
\end{lem}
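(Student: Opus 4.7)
The plan is to exploit the non-projectivity of $N$ by producing a non-split epimorphism $\phi : M \twoheadrightarrow N$, applying the almost-factorization hypothesis to $\phi$, and then running a short diagram chase to exhibit a factorization of $\alpha$ through the inclusion $\Ima f \hookrightarrow Y$.

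Because $N$ is non-projective in the abelian category $\mathcal C$, there exists a short exact sequence $0 \to L \to M \xrightarrow{\phi} N \to 0$ that does not split; in particular, $\phi$ is not a split epimorphism. Since $\mathcal C$ is Krull--Schmidt and $N$ is indecomposable, $\End(N)$ is local, and a standard check using the definition of $\rad_{\mathcal C}(-,-)$ gives $\phi \in \rad_{\mathcal C}(M,N)$. Applying the almost-factorization hypothesis to $\phi$ then produces a morphism $\gamma : M \to X$ with $\alpha \phi = f \gamma$.

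Next, factor $f$ canonically as $X \xrightarrow{j} \Ima f \xrightarrow{i} Y$. Restricting $\gamma$ along the inclusion $L \hookrightarrow M$ gives $f(\gamma|_L) = \alpha \phi|_L = 0$, so $\gamma(L) \subseteq \Ker f = \Ker j$. Hence $j\gamma : M \to \Ima f$ vanishes on $L$, and the cokernel property of $\phi : M \twoheadrightarrow N$ produces a unique $\bar\gamma : N \to \Ima f$ with $\bar\gamma \phi = j \gamma$. Post-composing with $i$ yields $i\bar\gamma\phi = ij\gamma = f\gamma = \alpha\phi$, and cancelling the epimorphism $\phi$ gives $i\bar\gamma = \alpha$. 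This is precisely the desired conclusion $\Ima \alpha \subseteq \Ima f$.

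The only nontrivial verification is the implication ``$\phi$ not a split epi $\Rightarrow \phi \in \rad_{\mathcal C}(M,N)$'' for indecomposable $N$ in a Krull--Schmidt category, which is standard. It is worth noting that the hereditary and $\Hom$-finiteness hypotheses of the ambient section are not actually needed for this lemma: the argument above works in any Krull--Schmidt abelian category.
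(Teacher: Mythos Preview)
Your proof is correct and follows essentially the same strategy as the paper's: produce a non-split epimorphism $\phi: M \to N$ from the non-projectivity of $N$, observe that it lies in $\rad_{\mathcal C}(M,N)$, and apply the almost-factorization hypothesis to factor $\alpha\phi$ through $f$. The paper's concluding step is more direct than your diagram chase: rather than constructing an explicit factorization $\alpha = i\bar\gamma$ via the cokernel property, it simply observes that since $\phi$ is an epimorphism, $\Ima\alpha = \Ima(\alpha\phi) = \Ima(f\gamma) \subseteq \Ima f$.
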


\begin{proof} Notice that in an abelian category an object $P$ is projective if and only if each epimorphism $M\rightarrow P$ splits. So $N$ is not projective implies there is a non-split epimorphism $\pi: M\rightarrow N$.
Since $\alpha$ can almost factor through $f$, $\alpha\pi$ can factor through $f$. Therefore we have the following commutative diagram:
$$
\xymatrix{M\ar[r]^\pi\ar[d]^s&N\ar[d]^\alpha\\
                X\ar[r]^f&Y}
$$

Since $\pi$ is an epimorphism, $\Ima\alpha=\Ima\alpha\pi=\Ima fs\subseteq\Ima f$. \end{proof}

\begin{lem}\label{no N aft}
In the commutative diagram $(*)$, suppose there is an indecomposable object $Z$ and a morphism $\alpha:Z\rightarrow E$ such that $g\alpha$ almost factors through $f$. Then 
\begin{enumerate}
\item $Z$ is projective.
\item $\Top Z$ is a simple subobject of $C$.
\end{enumerate}
\end{lem}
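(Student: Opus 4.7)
The plan is to pull the almost-factorization hypothesis on $g\alpha\colon Z\to Y$ back through the exact sequence at the top of diagram $(*)$ to extract information about the composition $t\alpha\colon Z\to C$. The diagram chase will repeatedly use the commutativity $gs=f$ and $\pi g=lt$ together with the fact that $l$ is a monomorphism.

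For part (1), I would argue by contradiction. If $Z$ were non-projective, then Lemma~\ref{nonproj} applied to $g\alpha$ would force $\Ima(g\alpha)\subseteq\Ima f=\Ker\pi$, so $\pi g\alpha=0$. Combining this with $\pi g=lt$ and the injectivity of $l$ yields $t\alpha=0$, hence $\alpha$ factors through the kernel $s\colon X\to E$ of $t$, say $\alpha=s\beta$. Then $g\alpha=gs\beta=f\beta$ would factor through $f$, contradicting the hypothesis that $g\alpha$ almost factors through $f$.

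For part (2), with $Z$ now known to be indecomposable projective, set $S:=\Ima(t\alpha)\subseteq C$ and let $p\colon Z\twoheadrightarrow S$ be the induced epimorphism. The same diagram trick as in (1) shows $S\neq 0$. To see $S$ is simple, I would show that any proper subobject $S'\subsetneq S$ must vanish: set $M:=(t\alpha)^{-1}(S')\subsetneq Z$ and verify that the inclusion $h\colon M\hookrightarrow Z$ lies in $\rad(M,Z)$. Using Krull-Schmidt to decompose $M=\bigoplus M_i$, each component $M_i\to Z$ must be a non-isomorphism, since an isomorphism $M_i\cong Z$ sitting inside the monomorphism $h$ would force $M=M_i=Z$, contradicting $M\subsetneq Z$. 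The almost-factorization hypothesis applied to $h$ then makes $g\alpha h$ factor through $f$, and the diagram chase of part (1) gives $t\alpha h=0$; hence $S'=t\alpha(M)=0$.

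To identify $S$ with $\Top Z$, I would note that the nonzero epimorphism $p\colon Z\twoheadrightarrow S$ is right minimal---indecomposability of $Z$ leaves only $0$ and $Z$ as possible superfluous summands, and $p\neq 0$ rules out $Z$. Therefore $Z$ is the projective cover of $S$, and by Lemma~\ref{mras mono}(1) the kernel of such a projective cover is exactly $\rad Z$, yielding $\Top Z=Z/\rad Z\cong S$ as a simple subobject of $C$. The main obstacle I anticipate is the Krull-Schmidt bookkeeping required to show $h\in\rad(M,Z)$ for the proper inclusion $M\subsetneq Z$; apart from that, the proof reduces to diagram chases in $(*)$ combined with the projective-cover characterization from Lemma~\ref{mras mono}.
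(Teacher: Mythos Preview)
Your proof is correct. Part (1) matches the paper's argument essentially verbatim.

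For part (2) you take a genuinely different route. The paper applies the almost-factorization hypothesis to the single inclusion $\iota\colon\rad Z\hookrightarrow Z$ (which lies in $\rad(\rad Z,Z)$), obtains $t\alpha\iota=0$ by the same diagram chase, and deduces that $t\alpha$ factors through a nonzero map $u\colon\Top Z\to C$; since $\Top Z$ is simple this $u$ is a monomorphism. Your argument instead proves directly that $S=\Ima(t\alpha)$ is simple by pulling back an arbitrary proper subobject $S'\subsetneq S$ to a proper subobject $M\subsetneq Z$, checking via Krull--Schmidt that the inclusion $M\hookrightarrow Z$ lies in the radical, and then running the diagram chase on that inclusion. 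You then identify $S$ with $\Top Z$ a posteriori through the projective-cover characterization of Lemma~\ref{mras mono}. The paper's approach is shorter because it tests only one radical morphism rather than all of them; your approach has the compensating virtue that it establishes the simplicity of $\Top Z$ as a consequence rather than invoking it, so it is slightly more self-contained in a general $\Hom$-finite hereditary abelian category.
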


\begin{proof}  
  $(1)$ If $Z$ is non-projective, then by Lemma \ref{nonproj} it follows that $\Ima g\alpha\subseteq \Ima f$. That means $g\alpha$ can factor through $\Ima f$. So $lt\alpha=\pi g\alpha=0$, which implies $t\alpha=0$.  Hence $\alpha$ factors through $s$ and $g\alpha$ factors through $f$ which is a contradiction.
  
  $(2)$ By $(1)$, $Z$ is an indecomposable projective object. Then there is an exact sequence: $\xymatrix{0\ar[r]&\rad Z\ar[r]^\iota&Z\ar[r]&\Top Z\ar[r]&0}$. Since $g\alpha\iota$ can factor through $f$,   $\pi gh\alpha\iota=0$ which means $lt\alpha\iota=0$ and hence $t\alpha \iota=0$. So we have the following commutative diagram:
$$
\xymatrix{0\ar[r]&\rad Z\ar[r]^\iota&Z \ar[d]^\alpha\ar[r]&\Top Z\ar@{-->}[d]^u\ar[r]&0\\
0\ar[r]&X\ar[r]^s&E\ar[r]^t&C\ar[r]&0}
$$
Since $g\alpha$ cannot factor through $f$, $\pi g\alpha\neq 0$ which implies $t\alpha\neq 0$. Therefore $u\neq 0$ and $\Top Z$ is a simple subobject of $C$.
  \end{proof}

\begin{prop}\label{cok}
Let $f:X\rightarrow Y$ be a right minimal non-epimorphism. If $f$ has a minimal right determiner $\mathcal D$, then each simple object $S\subseteq\cok f$ has a projective cover. 
\end{prop}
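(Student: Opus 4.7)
The plan is to apply Lemma~\ref{cd} with $C=S$ to produce the commutative diagram
$$\xymatrix{0\ar[r]&X\ar[r]^s\ar@{=}[d]&E\ar[r]^t\ar[d]^g&S\ar[r]\ar[d]^l&0\\ &X\ar[r]^f&Y\ar[r]^-\pi&\cok f\ar[r]&0}$$
and then to exhibit an indecomposable projective object $Z\in\mathcal D$ together with a morphism $\alpha:Z\to E$ such that $g\alpha$ does not factor through $f$. Once this is in hand, the preliminary observations below will turn $t\alpha$ into an epimorphism from a projective onto the simple $S$, and Proposition~\ref{s cover} will then supply the desired projective cover of $S$.

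First I would record two elementary observations about the diagram. Because $t$ is an epimorphism onto the simple $S$ and $l$ is a monomorphism, $\pi g=lt\neq 0$, so $g$ itself does not factor through $f$. Secondly, using $gs=f$ and $\Ker t=\Ima s$, a short diagram chase shows that for any morphism $\alpha:Z\to E$, the composition $g\alpha$ factors through $f$ if and only if $t\alpha=0$.

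The construction of $(Z,\alpha)$ will proceed by iteration. Since $g$ does not factor through $f$ and $\mathcal D$ is a right determiner, Corollary~\ref{contrapositive2} yields an indecomposable $Z_1\in\mathcal D$ and $\alpha_1:Z_1\to E$ with $g\alpha_1$ not factoring through $f$. At each subsequent stage I examine the current morphism $g\alpha_n:Z_n\to Y$. If it almost factors through $f$, Lemma~\ref{no N aft} forces $Z_n$ to be projective, and $(Z_n,\alpha_n)$ is the desired pair. Otherwise, Lemma~\ref{chain}(1) produces an indecomposable $Z_{n+1}\in\mathcal D$ and a non-isomorphism $\beta_{n+1}:Z_{n+1}\to Z_n$ such that $g\alpha_n\beta_{n+1}$ still does not factor through $f$; I then set $\alpha_{n+1}:=\alpha_n\beta_{n+1}$ and continue. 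If the process never halts, the resulting infinite chain $\cdots\to Z_2\to Z_1\to Y$ consists of non-isomorphisms whose compositions into $Y$ are all nonzero, and Lemma~\ref{non-proj finite} forces $Z_i$ to be projective for all sufficiently large $i$; choosing any such $i$ completes the construction.

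The main obstacle is ruling out the scenario in which the iteration runs forever while producing only non-projective objects in $\mathcal D$---this is precisely what Lemma~\ref{non-proj finite} excludes, and it ultimately rests on the $\Hom$-finiteness of $\mathcal C$. A minor technical wrinkle is that Lemma~\ref{chain}(1) requires its source to be indecomposable, so one cannot invoke it directly on $g:E\to Y$ (since $E$ need not be indecomposable); this is why Corollary~\ref{contrapositive2} is used to initiate the iteration with an indecomposable $Z_1$. Once a projective $Z\in\mathcal D$ with $g\alpha$ not factoring through $f$ is obtained, the preliminary observation gives $t\alpha\neq 0$, which since $S$ is simple is an epimorphism $Z\twoheadrightarrow S$ from a projective, and Proposition~\ref{s cover} concludes.
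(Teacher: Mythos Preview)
Your proof is correct and follows the same skeleton as the paper's: apply Lemma~\ref{cd} with $C=S$, observe that $g$ does not factor through $f$, then iterate using the fact that $\mathcal D$ is a right determiner together with Lemma~\ref{chain}(1) and invoke Lemma~\ref{non-proj finite} to eventually land on an indecomposable projective $Z\in\mathcal D$ with $g\alpha$ not factoring through $f$.

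There is one genuine streamlining in your version. The paper argues by contradiction that the iteration \emph{must} halt: once the chain reaches projectives $Z_N, Z_{N+1},\dots$ with proper monomorphisms between them, the nonzero map $Z_N\to S$ forces $\Top Z_N\simeq S$, but then $Z_{N+1}\subseteq\rad Z_N$ maps to zero in $S$, contradicting that $g\alpha_1\cdots\alpha_{N+1}$ does not factor through $f$. From this the paper concludes that some $g\alpha$ almost factors through $f$, and then Lemma~\ref{no N aft}(2) identifies $Z$ itself as the projective cover of $S$. You bypass this contradiction entirely: as soon as any projective $Z$ appears with $t\alpha\neq 0$, you have an epimorphism from a projective onto $S$, and Proposition~\ref{s cover} furnishes the projective cover. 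This is cleaner, though the paper's route has the side benefit of pinpointing the cover as $Z$ and showing that the absolute non-factorization scenario cannot occur for this particular $g$.
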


\begin{proof}
By Lemma \ref{cd}, for each simple object $S\subseteq \cok f$, there is a commutative diagram:
$$
\xymatrix{0\ar[r]&X\ar[r]^s\ar@{=}[d]&E\ar[r]^t\ar[d]^g&S\ar[r]\ar[d]^l&0\\
               &X\ar[r]^f\ar[r]&Y\ar[r]^\pi&\cok f\ar[r]&0.} 
$$

{\bf Claim:} Assume $g$ (when restricted on each indecomposable summand) is not an absolute non-factorization of $f$.

Otherwise, by Lemma \ref{chain}, we have a sequence of morphisms:
 $$
\cdots Z_2\stackrel{\alpha_2}\rightarrow Z_1\stackrel{g\alpha_1}\rightarrow E_0
$$ 
with $Z_i$ indecomposable objects in $\mathcal D$, such that $g\alpha_1\cdots\alpha_k$ cannot factor through $f$ for any $k$.
By Corollary \ref{non-proj finite}, $Z_n$ are indecomposable projective objects whenever $n\geq N$ for some sufficiently large $N$ and $\alpha_n$ are proper monomorphisms for $n>N$. In particular,  morphisms $gh\alpha_1\alpha_2\cdots\alpha_n$ cannot factor through $f$ whenever $n\geq N$. Hence $t\alpha_1\alpha_2\cdots\alpha_n: Z_n\rightarrow S$ is non-zero for all $n\geq N$.   On the other hand, the indecomposable projective object $Z_N$ has a non-zero morphism $t\alpha_1\alpha_2\cdots\alpha_N$ to the simple object $S$ if and only if $\Top Z_N\simeq S$. But $\alpha_{N+1}$ is a proper monomorphism implies that $\Ima \alpha_{N+1}\subseteq \rad Z_N$. Hence $t\alpha_1\alpha_2\cdots\alpha_N\alpha_{N+1}=0$ which is a contradiction. 
The claim is verified.

Therefore there is an indecomposable $Z\in \mathcal D$ and a morphism $\alpha:Z\rightarrow E$ such that the composition $g\alpha$ can almost factor through $f$.

By Lemma \ref{no N aft}, $Z$ is projective and $\Top Z\subseteq S$. Hence it follows that $S\simeq \Top Z$ and $Z$ is the projective cover of $S$.
 \end{proof}

Furthermore, we have the following observation:

\begin{prop} \label{strong essential}
Let $f:X\rightarrow Y$ be a right minimal non-epimorphism. If $f$ has a strong minimal right determiner $\mathcal D$, then $\soc\cok f$ is essential.
\end{prop}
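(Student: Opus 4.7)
The plan is to mimic the functorial argument (Proposition~\ref{exist3}) at the object level inside $\mathcal C$, using Lemma~\ref{cd} as a translation device between subobjects of $\cok f$ and test morphisms into $Y$. Concretely, I would show that every nonzero subobject $C\subseteq \cok f$ contains a simple subobject; by (the obvious object-level analogue of) Proposition~\ref{ess equiv} this is equivalent to $\soc\cok f$ being essential.

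First I would fix a nonzero subobject $C\subseteq\cok f$ and invoke Lemma~\ref{cd} to obtain the commutative diagram
\[
\xymatrix{0\ar[r]&X\ar[r]^s\ar@{=}[d]&E\ar[r]^t\ar[d]^g&C\ar[r]\ar[d]^l&0\\
               &X\ar[r]^f\ar[r]&Y\ar[r]^-\pi&\cok f\ar[r]&0,}
\]
with $l$ the inclusion. Since $C\neq 0$, the morphism $g:E\to Y$ cannot factor through $f$: otherwise $\pi g=0$, forcing $l t=0$ and hence $l=0$. This is the remark already recorded after Lemma~\ref{cd}, so this step is free.

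Next I would bridge from $E$, which need not be indecomposable, to an indecomposable domain where strongness of $\mathcal D$ can be applied. By Lemma~\ref{sum}, the failure of $g$ to factor through $f$ forces some indecomposable summand $E'\subseteq E$ such that the restriction $g|_{E'}:E'\to Y$ still does not factor through $f$. Because $\mathcal D$ is a strong minimal right determiner, by Definition~\ref{strong defn} the morphism $g|_{E'}$ is not an absolute non-factorization of $f$; hence there exist an indecomposable $Z$ and a morphism $\alpha:Z\to E'$ such that $g|_{E'}\alpha$ almost factors through $f$. Composing $\alpha$ with the summand inclusion $E'\hookrightarrow E$ gives a morphism $\tilde\alpha:Z\to E$ with $g\tilde\alpha$ almost factoring through $f$.

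Finally, I would feed $\tilde\alpha$ into Lemma~\ref{no N aft}, which immediately yields that $Z$ is projective and $\Top Z$ embeds as a simple subobject of $C$. Since $C$ was an arbitrary nonzero subobject of $\cok f$, this proves essentiality. The only genuine subtlety I anticipate is the passage from $E$ to the indecomposable summand $E'$ — one must be careful that the summand chosen by Lemma~\ref{sum} really yields a non-factorization, so that the definition of ``absolute non-factorization'' (which requires an indecomposable domain) applies and strongness can be invoked. Everything else is either a direct citation of the diagrammatic lemmas already proved or a routine pass through Yoneda/functorial identifications.
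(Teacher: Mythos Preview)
Your proof is correct and follows essentially the same route as the paper: invoke Lemma~\ref{cd} to produce the diagram, use strongness of $\mathcal D$ to find an indecomposable $Z$ and $\alpha$ with $g\alpha$ almost factoring through $f$, then apply Lemma~\ref{no N aft} to extract a simple subobject of $C$. Your explicit passage to an indecomposable summand $E'$ of $E$ before invoking Definition~\ref{strong defn} is in fact more careful than the paper, which applies strongness directly to $g:E\to Y$ without commenting on the indecomposability hypothesis in Definition~\ref{anf defn}; this is a harmless refinement rather than a different approach.
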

\begin{proof} 
Suppose $\soc\cok f$ is not essential which means there is a non-zero object $C\subseteq\cok f$ such that $\soc C=0$.
Then by Lemma \ref{cd}, there is a commutative diagram:
$$
\xymatrix{0\ar[r]&X\ar[r]^s\ar@{=}[d]&E\ar[r]^t\ar[d]^g&C\ar[r]\ar[d]^l&0\\
               &X\ar[r]^f\ar[r]&Y\ar[r]^-\pi&\cok f\ar[r]&0.} 
$$

Since $\mathcal D$ is strong, there is an indecomposable $Z\in\mathcal D$ and a morphism $\alpha:Z\rightarrow E$ such that $g\alpha$ almost factors through $f$. However, by Lemma \ref{no N aft}, $\Top Z\subseteq C$ which contradicts with the fact $\soc C=0$.
\end{proof}

Proposition \ref{strong essential} does not hold for non-strong minimal right determiner. However,
in most cases we are interested in, any minimal right determiner is strong. We have the following observation:

\begin{prop} \label{all strong}
Let $\mathcal C$ be a $\Hom$-finite, hereditary abelian category. $f:X\rightarrow Y$ has a minimal right determiner $\mathcal D$, then $\mathcal D$ is strong when one of the following conditions are satisfied:
\begin{enumerate}
\item $\mathcal D$ contains finitely many indecomposable objects.
\item $\soc \cok f$ is a finite direct sum of simple objects.
\item $\cok f$  has a projective cover.
\end{enumerate}
\end{prop}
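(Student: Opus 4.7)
The plan is to argue all three cases by contradiction via Proposition \ref{non-strong chain}. Suppose $\mathcal D$ is not strong; that proposition yields an infinite chain
\begin{equation*}
\cdots\to P_3\xrightarrow{\alpha_3}P_2\xrightarrow{\alpha_2}P_1\xrightarrow{\alpha_1}Y
\end{equation*}
with each $P_i\in\mathcal D$ indecomposable projective, each $\alpha_i$ for $i>1$ a proper monomorphism, and no partial composite $\alpha_1\cdots\alpha_k$ factoring through $f$. Case (1) is then immediate, since $\mathcal D$ contains infinitely many indecomposable objects.

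My next step will be to upgrade this chain to one with pairwise non-isomorphic $P_i$'s. Since $\mathcal C$ is $\Hom$-finite Krull--Schmidt, each $\End(P)$ is a finite-dimensional local $k$-algebra with nilpotent radical; if some fixed indecomposable appeared as $P_i$ for infinitely many $i$, then telescoping the relevant segments of the chain would produce arbitrarily long non-zero compositions in $\rad\End(P)$, contradicting nilpotence. After reindexing I may therefore assume the $P_i$'s are pairwise non-isomorphic. By Proposition \ref{local ar}(2), each $P_i$ is the projective cover of a simple subobject $S_i:=\Top P_i\subseteq\cok f$, and by uniqueness of projective covers the $S_i$'s are also pairwise non-isomorphic. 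So $\cok f$ carries infinitely many pairwise non-isomorphic simple subobjects, which handles Case (2), since a finite direct sum of simples hosts only finitely many isomorphism classes of simple subobjects.

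For Case (3), I would let $p\colon Q\to\cok f$ be the projective cover; by Krull--Schmidt, $Q$ has finite length. For each $n$, the subobject $\bigoplus_{i=1}^n S_i\hookrightarrow\cok f$ has preimage $Q_n:=p^{-1}(\bigoplus_{i=1}^n S_i)\subseteq Q$, which is projective since $\mathcal C$ is hereditary. The induced epimorphism $Q_n\twoheadrightarrow\bigoplus_{i=1}^n S_i$ from a projective lifts through the projective cover $\bigoplus_{i=1}^n P_i\twoheadrightarrow\bigoplus_{i=1}^n S_i$, and this lift is itself an epimorphism by essentiality of the projective cover. Since $\bigoplus_{i=1}^n P_i$ is projective, the epimorphism splits, exhibiting $\bigoplus_{i=1}^n P_i$ as a projective subobject of $Q$ with $n$ pairwise non-isomorphic indecomposable summands.

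The hard part will be converting these embeddings for arbitrarily large $n$ into a contradiction. My approach is to show that $\Hom$-finiteness of $\End(Q)$ forces a bound on the number of pairwise non-isomorphic indecomposable projective summands of any projective subobject of the fixed $Q$: the distinct non-isomorphic indecomposable summands should contribute linearly independent data controlled by the finite-dimensional algebra $\End(Q)$, so only finitely many such summands can appear. With this boundedness in hand, the infinitude of the non-isomorphic $P_i$'s forces a contradiction, completing Case (3).
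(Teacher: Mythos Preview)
Your arguments for Cases (1) and (2) are correct. For (2) the paper is a bit slicker: it simply observes that Proposition~\ref{local ar}(2) forces every indecomposable projective in $\mathcal D$ to be the projective cover of some simple subobject of $\cok f$, so a finite $\soc\cok f$ gives only finitely many such projectives, and then Corollary~\ref{ker finite} reduces to Case~(1). Your direct route via the chain also works; note that the $P_i$ are already pairwise non-isomorphic with no reindexing needed, since the proper monomorphisms compose to a proper monomorphism $P_j\hookrightarrow P_i$ for $j>i$, and a proper self-embedding of an object in a $\Hom$-finite category is impossible (the endomorphism would be a non-nilpotent element of a nilpotent radical).

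Case (3), however, has a real gap. Your proposed ``hard part'' is not merely unproved but false in general: in a $\Hom$-finite hereditary abelian category there is no bound, governed by $\End(Q)$, on the number of non-isomorphic indecomposable summands of projective subobjects of $Q$. For instance, in $rep^+$ of the infinite binary tree rooted at $0$, one has $\End(P_0)=k$, yet the direct sum of the $2^n$ indecomposable projectives at depth $n$ is a projective subobject of $P_0$ for every $n$. So this line cannot close the argument.

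The good news is that your own construction, pushed one step further, gives exactly what the paper uses. You have a split epimorphism $Q_n\twoheadrightarrow\bigoplus_{i=1}^n P_i$ whose composite with $\bigoplus P_i\twoheadrightarrow\bigoplus S_i$ is the map $Q_n\to\bigoplus S_i$ with kernel $K=\Ker p$. Writing $C_n$ for the kernel of the split epimorphism, the splitting $Q_n\cong\bigl(\bigoplus P_i\bigr)\oplus C_n$ restricts to
\[
K\ \cong\ \bigl(\textstyle\bigoplus_{i=1}^n\rad P_i\bigr)\oplus C_n,
\]
so $\bigoplus_{i=1}^n\rad P_i$ is a \emph{direct summand of $K$}, not merely of a subobject of $Q$. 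The paper reaches the same conclusion via Lemma~\ref{seems easy}. Now the proper chain $\cdots\subsetneq P_{i+1}\subsetneq P_i$ forces $\rad P_{i+1}\subsetneq P_{i+1}\subseteq\rad P_i$, so the $\rad P_i$ are pairwise non-isomorphic (again, no proper self-embeddings in a $\Hom$-finite category). But $K$ has only finitely many isomorphism classes of direct summands by Krull--Schmidt, and this is the desired contradiction.
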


 (1) follows directly from Corollary \ref{non-strong chain}. In particular, in the category of coherent sheaves of a projective space, every minimal right determiner is strong. (2) follows from (1) and Proposition \ref{local ar} (2). We need a general fact in hereditary category before showing (3):

Suppose $C$ is an object in a $\Hom$-finite hereditary abelian category $\mathcal C$ with enough projective objects. Let $S$ be a simple subobject of $C$ ($w:S\rightarrow C$ is the inclusion). Taking the minimal projective resolution of $S$ and $C$, we have the following commutative diagram:

$$
\xymatrix{0\ar[r]&\rad P(S)\ar[d]^u\ar[r]^i&P(S)\ar[d]^v\ar[r]&S\ar@{^(->}[d]^w\ar[r]&0\\
0\ar[r]&P_1\ar[r]^j&P_0\ar[r]&C\ar[r]&0}
$$
Then we have:
\begin{lem} \label{seems easy}
In the commutative diagram above, 
\begin{enumerate}
\item $u$, $v$ are monomorphisms.
\item The left square is a pull-back
\item $u$ is a split monomorphism.
\end{enumerate}
\end{lem}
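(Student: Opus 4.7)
My plan is to prove the three parts in order, using $E := p^{-1}(S) \subseteq P_0$ as the recurring auxiliary object, where $p: P_0 \to C$ denotes the map from the bottom resolution. The key structural input is hereditariness: a subobject $A$ of a projective $P$ satisfies $\Ext^1(A,-) \cong \Ext^2(P/A,-) = 0$, hence is itself projective. In particular $E$ is projective and fits in a short exact sequence $0 \to P_1 \to E \to S \to 0$ (since $\Ker(p|_E) = P_1$ and $\Ima(p|_E) = S$).

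For (1), because $pv = w\pi$ factors through $S \subseteq C$, the map $v$ factors as $v = \iota_E \tilde v$ with $\tilde v: P(S) \to E$ and $\iota_E: E \hookrightarrow P_0$. Now $E \to S$ is an epimorphism from the projective object $E$, so by projectivity we can lift the projective cover map $\pi: P(S) \to S$ to some $\tilde q: E \to P(S)$ with $\pi \tilde q = (E \to S)$. The composition $\tilde q \tilde v$ is an endomorphism of $P(S)$ satisfying $\pi(\tilde q \tilde v) = \pi$, which by the right-minimality of the projective cover forces $\tilde q \tilde v$ to be an automorphism. Hence $\tilde v$ is a split monomorphism and $v = \iota_E \tilde v$ is a monomorphism. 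Applying the snake lemma to the original diagram (with $\Ker w = 0$) gives $\Ker u \cong \Ker v = 0$, so $u$ is mono as well.

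For (2), a direct diagram chase suffices. Given $(a, b) \in P(S) \oplus P_1$ with $v(a) = j(b)$, apply $p$ to get $w\pi(a) = pv(a) = pj(b) = 0$; since $w$ is mono, $\pi(a) = 0$, so $a = i(x)$ for a unique $x \in \rad P(S)$. Then $ju(x) = vi(x) = j(b)$ and $j$ mono yield $b = u(x)$, confirming that $(a,b)$ lifts uniquely through $\rad P(S)$, which is the pullback property.

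For (3), the pullback from (2) and exactness of the bottom row produce the short exact sequence
\[
0 \to \rad P(S) \xrightarrow{(i, u)} P(S) \oplus P_1 \xrightarrow{(v, -j)} E \to 0,
\]
whose image equals $\Ima v + P_1 = E$ because $\Ima v$ surjects onto $S = E/P_1$. Since $E$ is projective, this sequence splits; pick a retraction $\rho = (\rho_1, \rho_2)$, so $\rho_1 i + \rho_2 u = 1_{\rad P(S)}$. The composite $i\rho_1 \in \End(P(S))$ factors through $\rad P(S)$, hence lies in $\rad \End(P(S))$ by locality of $\End(P(S))$, so $1 - i\rho_1$ is an automorphism of $P(S)$ acting as the identity on the simple quotient $P(S)/\rad P(S)$. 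The five lemma then forces $1 - i\rho_1$ to restrict to an automorphism of $\rad P(S)$, and one checks that this restriction equals $1 - \rho_1 i$. Thus $\rho_2 u = 1 - \rho_1 i$ is invertible, and $(1 - \rho_1 i)^{-1} \rho_2$ is a retraction of $u$. The main obstacle lies precisely here: splitting the sequence only delivers a retraction of the pair $(i, u)$, and promoting this to a retraction of $u$ alone requires the local-ring argument inside $\End(P(S))$. The preparation in (1), identifying $E$ as the right projective object containing $P(S)$, is what makes every subsequent splitting available.
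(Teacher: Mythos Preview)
Your proof is correct and follows the same overall strategy as the paper—use hereditariness to make the relevant subobjects projective and hence obtain splittings—but the execution differs in each part.

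For (1), the paper is more direct: since $\mathcal C$ is hereditary, $\Ima v\subseteq P_0$ is projective, so $P(S)\twoheadrightarrow\Ima v$ splits, and indecomposability of $P(S)$ forces $\Ker v=0$. Your route via the auxiliary object $E=p^{-1}(S)$ and right-minimality of the projective cover works but is longer. For (2), you give a direct diagram chase, whereas the paper forms the pullback $P(S)\times_{P_0}P_1$ abstractly and identifies it with $\rad P(S)$ using that $\rad P(S)$ is the unique maximal proper subobject of $P(S)$; both are fine.

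For (3), your argument is in fact \emph{more complete} than the paper's. The paper obtains a retraction $(s,t)$ of $\binom{i}{u}$ with $si+tu=1_{\rad P(S)}$ and then asserts ``in particular $s=0$'', which is not justified as stated. Your local-ring argument—$i\rho_1$ lies in $\rad\End(P(S))$, so $1-i\rho_1$ is an automorphism of $P(S)$, and the five lemma forces its restriction $1-\rho_1 i$ to $\rad P(S)$ to be an automorphism, making $\rho_2 u=1-\rho_1 i$ invertible—is exactly the step needed to upgrade a retraction of the pair $(i,u)$ to a retraction of $u$ alone. Introducing $E$ is a clean organizing device (it makes the cokernel in the short exact sequence manifestly projective), though the paper gets the same splitting by observing that the image of $\binom{i}{u}$ has projective cokernel inside $P_0$.
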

\begin{proof} (1) Notice that $P(S)$ is indecomposable and $\mathcal C$ is hereditary, $v\neq 0$ implies $v$ is a monomorphism. Hence, so is $u$.

(2) Taking the pull-back of $v$, $j$, we have a morphism $\varphi: \rad P(S)\rightarrow P(S)\times P_1$ such that $a \varphi= i$ and $b \varphi=u$.
$$
\xymatrix{
 \rad P(S) \ar@/_/[ddr]_u \ar@/^/[drr]^i
   \ar@{.>}[dr]|-{\varphi}            \\
  & P(S)\times P_1 \ar[d]^b \ar[r]_a
                 & P(S)\ar[d]_v       \\
  & P_1 \ar[r]^j   & P_0                }
$$
From the pull-back, we know $a$ is a monomorphism. Since $i$ is a monomorphism, so is $\varphi$. Therefore, up to isomorphism, we have 
$$
\rad P(S)\subseteq P(S)\times P_1\subseteq P(S)
$$

 Since $\rad P(S)$ is the maximal submodule of $P(S)$, either $P(S)\times P_1\simeq \rad P(S)$ or $P(S)\times P_1\simeq P(S)$. If $P(S)\times P_1\simeq P(S)$, then $a$ is an isomorphism and hence $v$ factors through $j$ which is a contradiction. So $P(S)\times P_1\simeq \rad P(S)$.

We also have to show that $\varphi$ is an isomorphism. Consider the commutative diagram induced by $a \varphi=i$:

$$
\xymatrix{0\ar[r]&\rad P(S)\ar[d]^\varphi\ar[r]^i&P(S)\ar@{=}[d]\ar[r]^\pi&S\ar[d]^{\psi}\ar[r]&0\\
0\ar[r]&P(S)\times P_1\ar[r]^a&P(S)\ar[r]^\rho&S\ar[r]&0,}
$$
 where $\pi$ and $\rho$ are epimorphisms. Since $S$ is simple, $\psi$ must be an isomorphism. Hence so is~$\varphi$.
 
(3) Since the left square is a pull-back, there is an exact sequence:
$$
\xymatrix{0\ar[r]&\rad P(S)\ar[r]^{\left(\begin{smallmatrix} i\\u\end{smallmatrix}\right)}&P(S)\oplus P_1\ar[r]&P_0}
$$
Since $\mathcal C$ is hereditary the morphism $\left(\begin{smallmatrix} i\\u\end{smallmatrix}\right)$ is a split monomorphism. Hence there is a morphism $(s,t): P(S)\oplus P_1\rightarrow \rad P(S)$ such that $si+tu=1_{\rad P(S)}$. In particular $s=0$ and we have $u$ is a split monomorphism. \end{proof}

\vskip5pt

\noindent {\bf Proof of Proposition \ref{all strong} (3)}  

Assume $\mathcal D$ is non-strong. By Corollary \ref{non-strong chain} there is an infinite sequence of morphisms:
 $$
\cdots P_3\stackrel{\alpha_2}\rightarrow P_2\stackrel{\alpha_1}\rightarrow P_1\rightarrow Y
$$
with $P_i$'s indecomposable projective objects in $\mathcal D$ and $\alpha_i$'s are monomorphisms. By Proposition \ref{local ar}, each $P_i$ is a projective cover of a simple object $S_i$ which is a subobject of $\cok f$. Since $\cok f$ has a projective cover, we can assume 
$$
0\rightarrow K\rightarrow P\rightarrow \cok f\rightarrow 0
$$
is the minimal projective resolution of $\cok f$. 
%
By Lemma \ref{all strong} (3), every $\rad P_i$ is a summand of $K$. But $\cdots\subsetneq P_{i+1}\subsetneq P_i\subsetneq P_{i-1}\subsetneq \cdots$ implies that  $\cdots\subsetneq \rad P_{i+1}\subsetneq \rad P_i\subsetneq \rad P_{i-1}\subsetneq \cdots$. Therefore, $K$ contains infinitely many indecomposable direct summands, which is a contradiction. $\qed$

\begin{cor} \label{all strong cor}
Let $\mathcal C$ be a $\Hom$-finite, hereditary abelian category with enough projective objects. Then each minimal right determiner is strong. 
\end{cor}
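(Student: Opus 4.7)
The plan is to reduce the corollary directly to part (3) of Proposition \ref{all strong}, which already asserts that a minimal right determiner is strong whenever $\cok f$ admits a projective cover. Thus the task reduces to verifying that, under the hypotheses of the corollary---$\Hom$-finite hereditary abelian with enough projectives---every object, and in particular $\cok f$, has a projective cover.

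First I would observe that $\mathcal C$ is Krull-Schmidt, since the excerpt notes explicitly that any $\Hom$-finite abelian $k$-category with split idempotents is Krull-Schmidt (used in the discussion surrounding Proposition \ref{ks proj cov}). Next, given an arbitrary object $M\in\mathcal C$, the hypothesis of enough projectives produces an epimorphism $\pi:P\twoheadrightarrow M$ with $P$ projective. Proposition \ref{s cover} then applies verbatim and guarantees that $M$ admits a projective cover: one passes to the right minimal version of $\pi$, which is automatically a projective cover thanks to the Krull-Schmidt decomposition of $P$ into indecomposable summands with local endomorphism rings.

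Applying this with $M=\cok f$ shows that $\cok f$ has a projective cover, so Proposition \ref{all strong} (3) immediately yields that any minimal right determiner $\mathcal D$ of $f$ is strong. The only bookkeeping is to dispose of trivial edge cases: if $f$ is an isomorphism then $\mathcal D=\mathbf 0$, which is strong by convention; and if $f$ is an epimorphism then $\cok f=0$ trivially has the zero object as its projective cover, so the hypothesis of \ref{all strong} (3) is still satisfied.

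In short, the corollary is essentially an immediate combination of Proposition \ref{s cover} with Proposition \ref{all strong} (3), so no substantial obstacle is expected; the entire content of the argument is the observation that ``enough projectives $+$ Krull-Schmidt'' automatically upgrades to ``every object has a projective cover,'' which is exactly the missing input needed to invoke \ref{all strong} (3).
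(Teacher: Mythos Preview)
Your proposal is correct and matches the paper's intended reasoning: the corollary is stated immediately after the proof of Proposition~\ref{all strong}~(3) without a separate proof, precisely because ``enough projectives'' together with the Krull-Schmidt property (via Proposition~\ref{s cover}) guarantees that $\cok f$ has a projective cover, so condition~(3) applies. You have simply made explicit the step the paper leaves to the reader.
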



\subsection{Sufficient conditions for the existence of minimal right determiners} The converses of Proposition \ref{ker} and Proposition \ref{cok} are also true. Suppose $f:X\rightarrow Y$ is a right minimal non-isomorphism in a $\Hom$-finite hereditary abelian category $\mathcal C$.  First we have the following observation:

\begin{lem}
For any indecomposable direct summand $L$ of $\Ker f$, assume there exists an almost split sequence $$0\rightarrow L\stackrel{s}\rightarrow M\stackrel{t}\rightarrow N\rightarrow 0.$$ Then $N$ can almost factor through $f$.
\end{lem}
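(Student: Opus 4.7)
The strategy is to use the defining properties of the almost split sequence to produce a canonical morphism $\alpha \colon N \to Y$ and then verify the two bullets of Definition \ref{aft defn}. Write $u \colon L \hookrightarrow X$ for the composition of the split inclusion $L \hookrightarrow \Ker f$ with $i \colon \Ker f \hookrightarrow X$. The first key observation is that $u$ is \emph{not} a split monomorphism: if it were, the indecomposable summand $L$ of $X$ would satisfy $f(L)=0$, contradicting right minimality of $f$ (using the characterization recalled after Definition \ref{r minimal}). Since $s \colon L \to M$ is left almost split, the non-split monomorphism $u$ must factor through $s$, giving a morphism $v \colon M \to X$ with $vs = u$. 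Then $fvs = fu = 0$, so $fv$ vanishes on the image of $s$ and therefore factors through the cokernel $t \colon M \to N$, yielding $\alpha \colon N \to Y$ with $\alpha t = fv$. This $\alpha$ is our candidate.

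To verify the second bullet of Definition \ref{aft defn}, let $h \colon U \to N$ lie in $\rad_{\mathcal C}(U,N)$, i.e.\ $h$ is not a split epimorphism. Since $t$ is right almost split, $h = th'$ for some $h' \colon U \to M$, and then $\alpha h = \alpha t h' = fvh'$ factors through $f$. This part is immediate.

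The main obstacle is the first bullet: showing that $\alpha$ itself does not factor through $f$. Suppose for contradiction that $\alpha = f\gamma$ for some $\gamma \colon N \to X$. Then $f(v-\gamma t) = \alpha t - f\gamma t = 0$, so $v - \gamma t$ factors through the kernel, i.e.\ there exists $\beta \colon M \to \Ker f$ with $i\beta = v - \gamma t$. Precomposing with $s$ and using $ts=0$ together with $vs=u = iu'$, where $u' \colon L \to \Ker f$ denotes the split inclusion, we get $i\beta s = u = iu'$; since $i$ is monic, $\beta s = u'$. Choosing a retraction $p \colon \Ker f \to L$ of $u'$ (which exists because $L$ is a direct summand of $\Ker f$) gives $(p\beta) s = 1_L$, so $s$ is a split monomorphism, contradicting the fact that $0 \to L \xrightarrow{s} M \xrightarrow{t} N \to 0$ is almost split. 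Hence $\alpha$ cannot factor through $f$, and together with the previous paragraph this shows that $\alpha$ almost factors through $f$, so $N$ can almost factor through $f$.
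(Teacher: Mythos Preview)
Your proof is correct and follows essentially the same route as the paper: both use left almost splitness of $s$ to build the commutative diagram, right almost splitness of $t$ for the second bullet, and the split inclusion $L\hookrightarrow\Ker f$ to rule out factorization. The only difference is cosmetic: the paper packages the non-factorization step as an appeal to Lemma~\ref{pullback}, whereas you unwind that lemma's argument inline (your $\beta,\gamma$ computation is exactly the converse direction of its proof).
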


\begin{proof} Denote $u: L\rightarrow \Ker f$ to be the split monomorphism and $i:\Ker f\rightarrow X$ to be the canonical embedding. Then, because $f$ is right minimal, the composition $iu$ is not a split monomorphism. Also because $s$ is left almost split, $iu$ factors through $s$. Hence we have the following commutative diagram:
$$
\xymatrix{0\ar[r]&L\ar[r]^s\ar[d]^u&M\ar[r]^t\ar[d]^v&N\ar[r]\ar[d]^w&0\\
          0\ar[r]&\Ker f\ar[r]^i&X\ar[r]^f&Y}
$$

By Lemma \ref{pullback}, $w$ cannot factor through $f$. Also because $t$ is right almost split, it is easy to see that $w$ can almost factor through $f$. \end{proof}

\begin{lem}
For each simple subobject $S$ of $\cok f$, if $S$ has a projective cover $P(S)$, then $P(S)$ can almost factor through $f$.
\end{lem}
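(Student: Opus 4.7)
The plan is to mirror the previous lemma, but starting from the pull-back diagram built in Lemma \ref{cd} rather than from a left almost split map into $\Ker f$. Given a simple subobject $S\subseteq\cok f$ with projective cover $p:P(S)\onto S$, I would first apply Lemma \ref{cd} to produce the commutative diagram with exact rows
$$
\xymatrix{0\ar[r]&X\ar[r]^s\ar@{=}[d]&E\ar[r]^t\ar[d]^g&S\ar[r]\ar[d]^l&0\\
               &X\ar[r]^f&Y\ar[r]^-\pi&\cok f\ar[r]&0,}
$$
and then, using that $P(S)$ is projective and $t$ is an epimorphism, lift $p$ to a morphism $\tilde p:P(S)\to E$ with $t\tilde p=p$. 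The candidate almost factorization will be $\alpha:=g\tilde p:P(S)\to Y$. Note that $P(S)$ is indecomposable, since by Corollary \ref{simple fun 2} it has a local endomorphism ring.

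Next I would check the two defining properties of almost factorization. That $\alpha$ does not factor through $f$ is immediate: if $\alpha=fh$ for some $h$, then $\pi\alpha=\pi fh=0$; but $\pi\alpha=\pi g\tilde p=lt\tilde p=lp$, which is non-zero because $p$ is an epimorphism onto the simple $S$ and $l$ is its inclusion into $\cok f$. For the second property, let $h:U\to P(S)$ lie in $\rad_\mathcal C(U,P(S))$; by Lemma \ref{sum} we may reduce to the case $U$ indecomposable, in which case $h$ is a non-isomorphism into the indecomposable object $P(S)$ and is in particular not a split epimorphism.

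The crucial step is to argue that $ph=0$. Consider the composition $ph:U\to S$; since $S$ is simple, either $ph=0$ or $ph$ is an epimorphism. In the latter case, the essentiality of the projective cover $p:P(S)\onto S$ (as recalled before Proposition \ref{s cover}) forces $h$ itself to be an epimorphism; then projectivity of $P(S)$ makes $h$ a split epimorphism, contradicting $h\in\rad_\mathcal C(U,P(S))$. Hence $ph=0$, and therefore $t\tilde ph=ph=0$, so $\tilde ph$ factors through $\Ker t=s$: write $\tilde ph=sk$ for some $k:U\to X$. Using the commutativity $gs=f$ of the left square of the diagram, we conclude $\alpha h=g\tilde ph=gsk=fk$, so $\alpha h$ factors through $f$. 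This establishes that $\alpha$ almost factors through $f$, and hence that $P(S)$ almost factors through $f$, as required.

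The main (and essentially only non-routine) technical point is the essentiality argument that forces $ph=0$; without invoking that $p$ is an essential epimorphism one cannot rule out a non-isomorphism $h:U\to P(S)$ whose composition with $p$ remains surjective. The remainder of the argument is formal diagram chasing using Lemma \ref{cd}, the projectivity of $P(S)$, and the exactness of the top row $0\to X\xrightarrow{s}E\xrightarrow{t}S\to 0$.
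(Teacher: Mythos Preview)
Your proof is correct. The paper takes a somewhat more direct route: rather than invoking Lemma \ref{cd} and the auxiliary object $E$, it lifts the composition $P(S)\to S\hookrightarrow\cok f$ directly along the epimorphism $Y\to\cok f$ using projectivity of $P(S)$, obtaining $\alpha:P(S)\to Y$. The restriction $\alpha\iota$ (with $\iota:\rad P(S)\hookrightarrow P(S)$) then lands in $\Ima f$, and the paper invokes heredity at this point --- so that $\rad P(S)$ is projective --- to upgrade this to a factorization through $f:X\to Y$; the verification of the almost factorization is then left implicit. Your route instead hides heredity inside Lemma \ref{cd}, which buys you the short exact sequence $0\to X\to E\to S\to 0$ and hence the clean kernel computation $\Ker t = s(X)$, making the final factoring step automatic. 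Both arguments ultimately rest on the same observation, that a radical morphism $h:U\to P(S)$ is killed by $P(S)\to S$; you establish this via essentiality of the projective cover, the paper via $\iota$ being right almost split (Lemma \ref{mras mono}), and these are of course equivalent.
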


\begin{proof}  By the assumption, we know that there is an exact sequence $$\xymatrix{0\ar[r]&\rad P(S)\ar[r]^\iota&P(S)\ar[r]^\pi&S\ar[r]&0.}$$ Since $P(S)$ is projective, there is a morphism $\alpha: P(S)\rightarrow Y$ such that the right square of the following diagram commutes:

$$
\xymatrix{0\ar[r]&\rad P(S)\ar@{-->}[d]\ar[r]^\iota&P(S)\ar[r]^\pi\ar@{-->}[d]^\alpha&S\ar[r]\ar@{^(->}[d]^i&0\\
           &X\ar[r]^f&Y\ar[r]^p&\cok f\ar[r]&0.}
$$
which implies that $\alpha\iota$ factors through $\Ima f$. Because $\mathcal C$ is hereditary, $\rad P(S)$ is projective. Therefore $\alpha\iota$ factors through $f$. Then it is clear that $P(S)$ can almost factor through $f$. \end{proof}

\vskip10 pt

\begin{prop}\label{suff}
Let $f$ be a right minimal non-isomorphism in $\mathcal C$. If for each indecomposable summand $L_i$ of $\Ker(f)$ there is an almost split sequence $0\rightarrow L_i\rightarrow M_i\rightarrow N_i\rightarrow 0$ and if $\soc\cok(f)=\bigoplus S_i$ is essential where each $S_i$ has a projective cover, then $f$ has a strong minimal right determiner $\mathcal D=add(\{N_i\}\cup \{P(S_i)\})$.
\end{prop}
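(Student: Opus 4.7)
The plan is to apply Theorem \ref{exist1} once $\soc\cok(-,f)$ is shown to be essential, and then identify the resulting minimal right determiner with $\mathcal D$. The two lemmas immediately preceding the proposition already give that each $N_i$ and each $P(S_j)$ almost factors through $f$, so by Lemma \ref{aft} their associated simple functors $(-,N_i)/\rad(-,N_i)$ and $(-,P(S_j))/\rad(-,P(S_j))$ are subfunctors of $\cok(-,f)$. Granted essentiality, Theorem \ref{exist1} yields a strong minimal right determiner $\mathcal D'$, which by Proposition \ref{local ar} must lie in $\add(\{N_i\}\cup\{P(S_j)\})$ and which by the preceding lemmas must contain every such indecomposable, forcing $\mathcal D'=\mathcal D$.

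For essentiality I fix a nonzero subfunctor $H\subseteq\cok(-,f)$ and a nonzero $\bar g\in H(T)$ with $T$ indecomposable and $g\colon T\to Y$ not factoring through $f$, and split on $\pi g$ where $\pi\colon Y\to\cok f$. If $\pi g\neq 0$, essentiality of $\soc\cok f$ produces a simple $S_j\subseteq\Ima\pi g$, and projectivity of $P(S_j)$ combined with the epimorphism from $T$ onto $S_j$ gives $\alpha\colon P(S_j)\to T$ with $\pi g\alpha$ equal to the projective cover followed by the inclusion $S_j\hookrightarrow\cok f$. Comparing $g\alpha$ with the canonical morphism $\beta_j\colon P(S_j)\to Y$ from the second preceding lemma, their difference satisfies $\pi(g\alpha-\beta_j)=0$, hence factors through $\Ima f$, and lifts through $f$ by projectivity of $P(S_j)$; thus $\overline{g\alpha}=\overline{\beta_j}\in H(P(S_j))$, and since $\beta_j$ almost factors through $f$, the Yoneda argument delivers a simple subfunctor $(-,P(S_j))/\rad(-,P(S_j))\hookrightarrow H$.

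If $\pi g=0$, I write $g=ig'$ with $g'\colon T\to\Ima f$ and associate the nonzero extension class $[E']\in\Ext^1(T,\Ker f)=\bigoplus_i\Ext^1(T,L_i)$ arising from pulling back $0\to\Ker f\to X\to\Ima f\to 0$ along $g'$. Fix $j$ with component $\delta_j\neq 0$; right minimality of $f$ makes the inclusion $L_j\hookrightarrow X$ non-split, so applying left almost split to $s_j$ produces $\beta\colon N_j\to T$ with $\beta^*\delta_j=[\eta_j]$, so that $g\beta$ does not factor through $f$. I then aim to deliver the simple subfunctor $(-,N_j)/\rad(-,N_j)\hookrightarrow H$ via $\overline{g\beta}$, which requires that $g\beta\gamma$ factor through $f$ for every radical $\gamma\colon V\to N_j$; writing $\gamma=t_j\gamma'$, the $L_j$-component of the extension class of $g\beta\gamma$ dies because $t_j^*[\eta_j]=0$, but the residual components $\beta^*\delta_i$ for $i\neq j$ may persist.

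The main obstacle is neutralizing these residual components. My plan is to compare $g\beta$ with the canonical almost-factoring morphism $w_j\colon N_j\to Y$ from the first preceding lemma, whose extension class $(u_j)_*[\eta_j]$ (with $u_j$ the inclusion $L_j\hookrightarrow\Ker f$) is supported solely in the $L_j$-slot, and to exploit the freedom in the left almost split lift, which permits modifying $\beta$ by any $\beta''$ with $(\beta'')^*\delta_j=0$, to arrange that $g\beta-w_j$ factors through $f$; since $w_j$ almost factors through $f$, this would yield $\overline{g\beta}=\overline{w_j}$ and the simple subfunctor follows. Should this direct matching fail in some edge case, an iterative argument on the radical compositions $g\beta\gamma$, using the $\Hom$-finiteness of $\mathcal C$ and the finiteness of the set of indecomposable summands of $\Ker f$, forces the process to terminate at a simple subfunctor inside $H$.
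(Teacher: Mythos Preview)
Your overall strategy---reduce to Theorem \ref{exist1} by proving that $\soc\cok(-,f)$ is essential, then identify the resulting determiner with $\mathcal D$ via Proposition \ref{local ar} and Lemma \ref{common}---is legitimate, and the case $\pi g\neq 0$ is handled correctly. The paper takes a more direct route: it does not pass through essentiality of the functor socle at all, but simply shows that for every $g\colon X'\to Y$ not factoring through $f$ there exists $D\in\mathcal D$ and $\phi\colon D\to X'$ with $g\phi$ \emph{almost} factoring through $f$. That single statement simultaneously yields that $\mathcal D$ is a right determiner (since $g\phi$ does not factor), that $\mathcal D$ is minimal (Lemma \ref{exist min} plus the two preceding lemmas), and that $\mathcal D$ is strong (no absolute non-factorization). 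Your framework reproves the same thing wearing the clothes of Proposition \ref{exist3}.

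Where your argument genuinely stalls is the case $\pi g=0$. Decomposing $\Ext^1(T,\Ker f)=\bigoplus_i\Ext^1(T,L_i)$ and building $\beta\colon N_j\to T$ from the single component $\delta_j$ is what creates the ``residual components'' problem you then have to fight; neither the proposed matching $g\beta-w_j$ nor the unspecified iterative argument is actually carried out. The paper avoids this altogether: it pulls back the \emph{full} sequence $0\to\Ker f\to X\to\Ima f\to 0$ along $g'$ to obtain $0\to\Ker f\to E\to T\to 0$, chooses an indecomposable summand $K\mid\Ker f$ whose inclusion $K\hookrightarrow E$ is not split, and then uses left almost splitness of $s\colon K\to M$ to produce a morphism of short exact sequences from the almost split sequence $0\to K\to M\to N\to 0$ into $0\to\Ker f\to E\to T\to 0$. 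Now any radical $\gamma\colon V\to N$ factors through $t\colon M\to N$, and the diagram routes $g'\phi t$ through $E\to X$, so $g\phi\gamma$ factors through $f$ on the nose---no residual components, no iteration. If you adopt this single modification (work with the full pullback rather than one $\Ext$-component), your proof becomes complete and is then essentially a repackaging of the paper's argument inside the essentiality framework.
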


\begin{proof} From the discussion above we know that $N_i$ and $P(S_i)$ can almost factor through $f$. 

 Let $g: X'\rightarrow Y$ be a morphism which cannot factor through $f$, we need to show that either there is a non-projective object  $N\in \mathcal D$ and a morphism $\phi:N_i\rightarrow X'$  such that $g\phi$ can almost factor through $f$  or there is a projective object $P(S_i)\in \mathcal D$ and $\psi:P(S_i)\rightarrow X'$, such that $g\psi$ can almost factor through $f$. Consider the canonical composition $f: X\rightarrow \Ima f\stackrel{i}\hookrightarrow Y$, it separates into the following two cases:

{\bf Case 1: $g$ can factor through $\Ima f$. } Assume there is $g^\prime: X'\rightarrow \Ima f$ such that $g=ig^\prime$.  Pull back  using $g^\prime$, we have the following commutative diagram:
$$
\xymatrix{\delta:&0\ar[r]&\Ker f\ar@{=}[d]\ar[r]&E\ar[r]\ar[d]&X'\ar[r]\ar[d]^{g^\prime}&0\\
&0\ar[r]&\Ker f\ar[r]&X\ar[r]&\Ima f\ar[r]&0}
$$

Since $g$ cannot factor through $f$, $g^\prime$ cannot factor through $X$. By Lemma \ref{pullback}, $\delta$ does not split. In particular, there is a summand $K$ of $\Ker f$ such that the composition $K\hookrightarrow\Ker f\hookrightarrow E$ is not a split monomorphism. Suppose that $0\rightarrow K\rightarrow M\rightarrow N\rightarrow 0$ is an almost split sequence. Then we can obtain the following commutative diagram:

$$
\xymatrix{
0\ar[r]&K\ar[d]^u\ar[r]&M\ar[r]\ar[d]&N\ar[r]\ar[d]^{\phi}&0\\
0\ar[r]&\Ker f\ar@{=}[d]\ar[r]&E\ar[r]\ar[d]&X'\ar[r]\ar[d]^{g^\prime}&0\\
0\ar[r]&\Ker f\ar[r]&X\ar[r]&\Ima f\ar[r]&0}
$$

Using Lemma \ref{pullback}, $g'\phi$ cannot factor through $X$. Hence it is easy to see that $g\phi$ can almost factor through $f$.

{\bf Case 2: $g$ cannot factor through $\Ima f$.} This is equivalent to  $\pi g\neq0$ which implies that $\Ima\pi g$ is a nonzero subobject of $\cok f$. Since $\soc\cok f$ is essential, there is a simple subobject $S$ such that $S\subseteq\Ima\pi g\subseteq\cok f$. Hence there is a $\psi: P(S)\rightarrow X'$ such that $\pi g\psi=\iota\rho $. 

$$
\xymatrix{0\ar[r]&\rad P(S)\ar[dd]\ar[r]&P(S)\ar[r]^\rho\ar[d]^\psi &S\ar[r]\ar@{^(->}[dd]^\iota&0\\
&&X'\ar[d]^g\\
0\ar[r]&\Ima f\ar[r]^i&Y\ar[r]^\pi&\cok f\ar[r]&0. }
$$

It is easy to see that $g\psi$ can almost factor through $f$.

\end{proof}

Combining the results from Theorem \ref{exist}, Proposition \ref{ker}, \ref{cok}, \ref{strong essential}, \ref{suff}, we have the following main result of this section:

\begin{thm}\label{main}
Suppose $\mathcal C$ is a $\Hom$-finite hereditary abelian $k$-category. Let $f$ be a right minimal non-isomorphism in $\mathcal C$. Then the following are equivalent\\
$(1)$ $f$ has a strong minimal right determiner.\\
$(2)$ $\soc \cok(-,f)$ is essential. \\
$(3)$ Each indecomposable summand $K$ of $\Ker(f)$ is the starting term of an almost split seqeunce
$$
\xymatrix{0\ar[r]&K\ar[r]&M\ar[r]&N\ar[r]&0,}
$$
$\soc\cok(f)$ is essential and each simple subobject of $\cok f$ has a projective cover.\\
Furthermore, if the strong minimal right determiner exists, it is given by $$\mathcal D=add(\{N\}\cup\{P(S)\})$$ where $N$ corresponds to each indecomposable summand of $\Ker f$ and $P(S)$ is the projective cover of $S$ which is a simple subobject of $\cok f$.
\end{thm}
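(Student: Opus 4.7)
The plan is to assemble Theorem \ref{main} from the results already established in this section, which together give all the required implications and the formula. The key observation is that since $\mathcal C$ is a $\Hom$-finite abelian $k$-category, it is automatically Krull-Schmidt (by the remark following Proposition on semi-perfect rings), so Theorem \ref{exist} applies; moreover, the $\rad\End(Z)$-nilpotency condition of Theorem \ref{unique} is automatic by Remark \ref{D}(d), so any minimal right determiner is unique once it exists.

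For the equivalence $(1)\Leftrightarrow(2)$, I would simply invoke Theorem \ref{exist}, which is precisely the statement that $\soc\cok(-,f)$ is essential if and only if $f$ has a strong minimal right determiner. Nothing additional to do here.

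For $(1)\Rightarrow(3)$, assume $f$ has a strong minimal right determiner $\mathcal D$. Since $f$ is a right minimal non-isomorphism, I can apply Proposition \ref{ker} (taking $f$ as a right minimal non-monomorphism on the non-zero kernel part; if $f$ is a monomorphism then $\Ker f=0$ and the kernel condition is vacuous) to conclude that each indecomposable summand of $\Ker f$ is the starting term of an almost split sequence. Proposition \ref{cok} (applied when $f$ is not an epimorphism; the condition on simple subobjects of $\cok f$ is vacuous when $\cok f=0$) gives the projective-cover condition for each simple subobject of $\cok f$. Finally, Proposition \ref{strong essential} yields that $\soc\cok f$ is essential. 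Together these give (3).

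For $(3)\Rightarrow(1)$, together with the explicit formula, I would apply Proposition \ref{suff} directly: given almost split sequences $0\to L_i\to M_i\to N_i\to 0$ for the indecomposable summands $L_i$ of $\Ker f$, and the projective covers $P(S_i)$ of the simple subobjects of $\cok f$ (which are the summands of the essential socle $\soc\cok f=\bigoplus S_i$), Proposition \ref{suff} produces the strong minimal right determiner $\mathcal D=\add(\{N_i\}\cup\{P(S_i)\})$. This also verifies the final formula in the theorem, since $\tau^-\widetilde{\Ker f}=\add\{N_i\}$ by the definition of the Auslander--Reiten translation as the right-hand terms of almost split sequences starting at the indecomposable summands of $\widetilde{\Ker f}=\Ker f_1=\Ker f$ (the latter equality because $f$ is assumed right minimal). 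I do not expect any serious obstacle: the proof is essentially a verification that the hypotheses of the cited propositions are met and an observation that the formula in Proposition \ref{suff} matches the Auslander--Reiten--Smal\o--Ringel form stated in the theorem.
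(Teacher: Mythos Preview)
Your proposal is correct and follows essentially the same route as the paper, which explicitly states that Theorem \ref{main} is obtained by ``combining the results from Theorem \ref{exist}, Proposition \ref{ker}, \ref{cok}, \ref{strong essential}, \ref{suff}.'' Your handling of the edge cases (when $f$ is a monomorphism or an epimorphism, making the relevant conditions vacuous) is appropriate and matches the implicit reasoning in the paper.
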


For arbitrary morphism $f: X\rightarrow Y$, the {\bf intrinsic kernel} of $f$, denoted by $\widetilde\Ker f$ is defined to be  $f$ to be $\Ker f_0$ where $f_0: X_0\rightarrow Y$ is the right minimalization of $f$. From Theorem \ref{main} and Proposition \ref{r min ver}, we can recover Theorem \ref{C}.


Finally, together with Corollary \ref{all strong cor} we state the equivalent condition of existence of minimal right determiner in the case that $\mathcal C$ has enough projective objects:
\begin{cor}\label{enough proj} Let $\mathcal C$ be a $\Hom$-finite hereditary abelian $k$-category and $f$ a morphism in $\mathcal C$. 
Assume $\mathcal C$ has enough projective objects. Then $f$ has a minimal right determiner if and only if   \begin{enumerate}
\item each indecomposable summand of the intrinsic kernel  $\widetilde\Ker(f)$ is the starting term of an almost split seqeunce, 
\item $\soc\cok(f)$ is essential. 
\end{enumerate}
\end{cor}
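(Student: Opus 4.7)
The plan is to reduce Corollary \ref{enough proj} to Theorem \ref{main} (equivalently Theorem \ref{C}) by showing that the two extra hypotheses about \emph{strong} minimal right determiners and about projective covers of simple subobjects of $\cok f$ are both automatic in the present setting.

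First I would handle the discrepancy between ``minimal right determiner'' in the statement and ``strong minimal right determiner'' in Theorem \ref{main}. Since $\mathcal C$ is $\Hom$-finite hereditary abelian with enough projective objects, Corollary \ref{all strong cor} applies: every minimal right determiner is automatically strong. Therefore, in this setting, $f$ has a minimal right determiner if and only if $f$ has a strong minimal right determiner, and the formula $\mathcal D = \add(\tau^-\widetilde\Ker f \oplus P(\soc\cok f))$ is the unique candidate.

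Next I would show that condition (3) of Theorem \ref{main} (each simple subobject of $\cok f$ has a projective cover) is redundant. Since $\mathcal C$ is $\Hom$-finite abelian it is Krull-Schmidt, and since it has enough projective objects, for any simple subobject $S \subseteq \cok f$ there is an epimorphism $P \twoheadrightarrow S$ from a projective object $P$. By Proposition \ref{s cover}, $S$ admits a projective cover. Thus condition (3) of Theorem \ref{main} is free in our setting, and the equivalent conditions reduce to (1) every indecomposable summand of $\widetilde\Ker f$ is the starting term of an almost split sequence, and (2) $\soc \cok f$ is essential.

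Combining the two reductions, for the ``only if'' direction: if $f$ has a minimal right determiner, then by Corollary \ref{all strong cor} it is strong, so Theorem \ref{main} (applied to the right minimal version of $f$ via Remark \ref{mrd not minimal} and Proposition \ref{r min ver}) gives conditions (1) and (2). For the ``if'' direction: given (1) and (2), the automatic existence of projective covers for simple subobjects supplies the missing hypothesis (3) of Theorem \ref{main}, which then produces a strong (hence genuine) minimal right determiner of the form $\add(\tau^-\widetilde\Ker f \oplus P(\soc \cok f))$. I do not anticipate a serious obstacle: the only point requiring a little care is confirming that ``enough projectives'' in a Krull-Schmidt hereditary $\Hom$-finite abelian category is genuinely enough to pass from a mere projective epimorphism onto a simple object to an essential projective cover, but this is precisely the content of Proposition \ref{s cover}.
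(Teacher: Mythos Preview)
Your proposal is correct and mirrors the paper's own (implicit) argument: the corollary is obtained from Theorem \ref{main} by invoking Corollary \ref{all strong cor} to upgrade any minimal right determiner to a strong one, and by using enough projectives together with Proposition \ref{s cover} to make the projective-cover condition on simple subobjects of $\cok f$ automatic. The paper does not spell out the second reduction explicitly, but your use of Proposition \ref{s cover} is exactly the intended mechanism.
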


\begin{rmk} \label{enough inj}
This theorem still holds as long as every minimal right determiner is strong.
\end{rmk}

\section{Strongly locally finite quivers}\label{inf q}
In this section, we give a brief introduction to the representation theory of infinite quivers. A detailed survey into this topic was made in \cite{BLP}. And we apply our results in Section 5 to study the minimal right determiners of morphisms in the category of representations of infinite quivers.

\subsection{Infinite quivers}  $Q=(Q_0,Q_1)$ stands for a quiver, where $Q_0$ stands for the set of vertices and $Q_1$ for the set of arrows.  A quiver $Q$ is called {\bf locally finite} if for each vertex $x$, there is a finite number of arrows staring from and ending in $x$. $Q$ is called {\bf interval finite} if for $\forall x,y\in Q_0$, the number of paths from $x$ to $y$ is finite. It is easy to see that $Q$ has no oriented cycle if $Q$ is interval finite. 

\begin{defn}\label{slf}
$Q$ is called {\bf strongly locally finite} if it is both locally finite and interval finite.
\end{defn}
A {\bf left infinite path} in $Q$ is a path:
$$
\xymatrix{\cdots\ar[r]&\circ\ar[r]&\circ\ar[r]&\circ}
$$

A {\bf right infinite path} in $Q$ is a path:
$$
\xymatrix{\circ\ar[r]&\circ\ar[r]&\circ\ar[r]&\cdots}
$$

A {\bf double infinite path} in $Q$ is a path:
$$
\xymatrix{\cdots\ar[r]&\circ\ar[r]&\circ\ar[r]&\circ\ar[r]&\cdots}
$$

\subsection{Representations}\label{reps} Let $k$ be an arbitrary field and $Q$ is a strongly locally finite quiver. A representation $M$ of $Q$ over $k$ is a family of $k$-spaces $M(x) $ with $x \in Q_0$ together with a family of $k$-maps $M(\alpha): M(x)\rightarrow M(y)$ with $\alpha: x\rightarrow y$ in $Q_1$.  A morphism $f : M \rightarrow N$ of $k$-representations of $Q$ consists of a family of $k$-maps $f(x) : M(x) \rightarrow N(x)$ with $x \in Q_0$ such that $f(y)M(\alpha) = N(\alpha)f(x)$, for every arrow $\alpha : x \rightarrow y$. Denote by $Rep(Q)$ the abelian category of all $k$-representations of $Q$.

A $k$-representation $M$ is called {\bf locally finite dimensional} if $M(x)$ is of finite $k$-dimension for all $x \in Q_0$; and {\bf finite dimensional} if $\sum_{x\in Q_0} \dim M(x)$ is finite. Let $rep(Q)$ and $rep^b(Q)$ denote the full subcategories of $Rep(Q)$ generated by locally finite dimensional representations and finite dimensional representations respectively. Notice that $rep(Q)$ is a hereditary abelian category, but not necessarily $\Hom$-finite.

For each vertex $x\in Q_0$, let $P_x$ (or $I_x$) be the indecomposable projective (or injective) representation at $x$. Denote by $proj(Q)$ (or $inj(Q)$) the additive subcategory of $rep(Q)$ of $P_x$ (or $I_x$), $x\in Q_0$.

\begin{defn}\label{rep(q)}
A representation $M\in rep(Q)$ is called {\bf finitely presented} if there are $P_0$, $P_1\in proj(Q)$ such that there is a projective resolution: 
$$
\xymatrix{0\ar[r]&P_1\ar[r]&P_0\ar[r]&M\ar[r]&0.}
$$

A representation $M\in rep(Q)$ is called {\bf finitely co-presented} if there are $I_0$, $I_1\in inj(Q)$ such that there is a injective resolution: 
$$
\xymatrix{0\ar[r]&M\ar[r]&I_0\ar[r]&I_1\ar[r]&0}.
$$

Denote by $rep^+(Q)$ (or $rep^-(Q)$) the category of finitely (co-)presented representations.
\end{defn}

\begin{rmk}
It is important to remind the readers that in some references $P_x$ (or $I_x$) are called the principle projective (or injective) representations. They are not necessarily all the projective (or injective) objects in $rep(Q)$. For example, the injective representation $I_1$ of the following quiver is a projective object in $rep(Q)$ but not of the form $P_x$.
$$
\xymatrix{\cdots\ar[r]&3\ar[r]&2\ar[r]&1}
$$

\end{rmk}

\begin{prop}[\cite{BLP} 1.15]\ 
\begin{enumerate}
\item $rep^+(Q)$ and $rep^-(Q)$ are $\Hom$-finite, hereditary and abelian, which are extension closed in $rep(Q)$.
\item $rep^+(Q)\cap rep^-(Q)=rep^b(Q)$. 
\end{enumerate}
\end{prop}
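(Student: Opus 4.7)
\medskip
\noindent\textbf{Proposal.} My plan is to attack part (1) one property at a time, reducing everything to the concrete fact that on a strongly locally finite quiver $Q$ each principal projective $P_x$ and each principal injective $I_x$ is locally finite-dimensional (this uses interval-finiteness: $P_x(y)$ and $I_y(x)$ are spanned by paths from $x$ to $y$, of which there are finitely many). For $\Hom$-finiteness, I would pick $M,N\in rep^+(Q)$ and fix a finite projective presentation $P_1\to P_0\to M\to 0$ with $P_0=\bigoplus_{i=1}^n P_{x_i}$. Applying $\Hom(-,N)$ gives an inclusion $\Hom(M,N)\hookrightarrow \Hom(P_0,N)=\bigoplus_i N(x_i)$, and since $N\in rep(Q)$ is locally finite-dimensional, the right-hand side is finite-dimensional. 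The hereditary property is immediate: any $M\in rep^+(Q)$ has, by definition, a projective resolution of length $\le 1$, so $\Ext^{\ge 2}(M,-)$ vanishes. The same resolution, together with Lemma/Horseshoe, gives extension closedness: if $0\to A\to B\to C\to 0$ is exact in $rep(Q)$ with $A,C\in rep^+(Q)$, splice the length-$1$ projective resolutions of $A$ and $C$ to produce one for $B$.

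The real content is showing that $rep^+(Q)$ is an abelian subcategory of $rep(Q)$, i.e.\ closed under kernels, cokernels, and images of morphisms $f:M\to N$. Cokernels are easy: choose projective presentations $P_1^M\to P_0^M\to M\to 0$ and $P_1^N\to P_0^N\to N\to 0$, lift $f$ to $P_0^M\to P_0^N$, and then $P_1^N\oplus P_0^M\to P_0^N\to \cok f\to 0$ is a finite projective presentation. The genuinely delicate step is the kernel. My plan is to show that the kernel of a morphism between finitely generated projectives is again finitely generated projective; then, given $f:M\to N$, one analyses the diagram
\[
\xymatrix{
P_1^M \ar[r]\ar[d] & P_0^M \ar[r]\ar[d]^{\tilde f} & M \ar[d]^f \ar[r] & 0 \\
P_1^N \ar[r] & P_0^N \ar[r] & N \ar[r] & 0
}
\]
and a diagram chase identifies $\Ker f$ as the cokernel of a map between finitely generated projectives, hence again in $rep^+(Q)$. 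Reducing to the projective case, the crux is: for any map $\varphi:P_x\to P_y$ between principal projectives of a strongly locally finite quiver, $\Ker \varphi$ is a finite direct sum of principal projectives $P_{z_i}$. This uses both pieces of ``strongly locally finite'': interval-finiteness makes $\Hom(P_x,P_y)$ finite-dimensional so $\varphi$ is specified by finite data, and local-finiteness controls the vertices where support lives. I expect this is where the bulk of the argument concentrates, and where the strong locally finite hypothesis is actually used.

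For the dual statements about $rep^-(Q)$, I would run the opposite argument: $rep^-(Q^{op})\simeq rep^+(Q)\op$ in the appropriate sense, and duality interchanges principal projectives and principal injectives, so every verification dualises verbatim.

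Part (2) is then short. The inclusion $rep^b(Q)\subseteq rep^+(Q)\cap rep^-(Q)$ follows because a finite-dimensional $M$ is supported on finitely many vertices $x_1,\dots,x_n$ (with finite-dimensional stalks), so the canonical surjection $\bigoplus_i P_{x_i}^{\dim M(x_i)}\twoheadrightarrow M$ has kernel also finitely generated projective by the kernel-closedness just established, giving a finite projective presentation; dually for the injective presentation. Conversely, suppose $M\in rep^+(Q)\cap rep^-(Q)$, with projective presentation $P_1\to P_0\to M\to 0$ and injective co-presentation $0\to M\to I_0\to I_1$. The projective presentation shows $M$ is supported on the finite set of vertices at which $P_0$ is generated together with their forward cones; the injective co-presentation shows it is supported on the analogous backward cones for $I_0$. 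The intersection condition forces the support to be contained in the set of vertices lying on a path between these two finite sets, which by interval-finiteness is itself finite. Combined with local finite-dimensionality of the stalks (forced by $\Hom$-finiteness applied to $\Hom(P_x,M)=M(x)$), this gives $M\in rep^b(Q)$. The anticipated main obstacle is cleanly carrying out the kernel computation for maps of principal projectives in part (1); once that is secured, the rest is bookkeeping.
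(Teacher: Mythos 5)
First, note that the paper itself offers no proof of this proposition: it is quoted from \cite{BLP}, 1.15, so your proposal can only be judged on its own merits. Much of it is sound and matches the standard arguments: the $\Hom$-finiteness proof via $\Hom(M,N)\hookrightarrow\Hom(P_0,N)\cong\bigoplus_i N(x_i)$, the hereditary and extension-closedness claims, and the support argument for $rep^+(Q)\cap rep^-(Q)\subseteq rep^b(Q)$ (forward cones meet backward cones in finitely many vertices by interval-finiteness) are all correct.

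The genuine gap is in the step you yourself identify as the crux, and the problem is not merely that it is left unproved: the reduction is wrong. You reduce kernel-closedness to the claim that $\Ker\varphi$ is a finite sum of principal projectives for $\varphi\colon P_x\to P_y$ between \emph{single} indecomposables. That case is nearly vacuous (since $\End(P_x)\cong k$ by interval-finiteness, once one knows images in projectives are projective, any nonzero $\varphi\colon P_x\to P_y$ is automatically a monomorphism), and it does not imply the case you actually need, $\varphi\colon\bigoplus_{i=1}^{n}P_{x_i}\to\bigoplus_{j}P_{y_j}$, because kernels do not decompose along a direct sum decomposition of the source: for the Kronecker quiver $1\rightrightarrows 2$ with arrows $a,b$, the map $(a,a)\colon P_2\oplus P_2\to P_1$ has both components injective but kernel isomorphic to $P_2$. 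What is really needed is the theorem that every subrepresentation of a finite direct sum of principal projectives is again a direct sum of principal projectives; then $\im\varphi$ is projective, the sequence $0\to\Ker\varphi\to\bigoplus_i P_{x_i}\to\im\varphi\to 0$ splits, and Krull--Schmidt for $\bigoplus_{i=1}^{n}P_{x_i}$ forces $\Ker\varphi$ to be a finite direct sum of copies of the $P_{x_i}$. That subrepresentation theorem is the actual content of \cite{BLP}, Section~1, and cannot be waved through via ``subobjects of projectives are projective'': the paper itself remarks that $rep(Q)$ has projective objects not of the form $\bigoplus P_x$. A second, more easily repaired, defect: your proof of $rep^b(Q)\subseteq rep^+(Q)$ invokes ``the kernel-closedness just established'' for the surjection $\bigoplus_i P_{x_i}^{d_i}\twoheadrightarrow M$, but that closure was proved only for morphisms between objects already known to lie in $rep^+(Q)$, and $M\in rep^+(Q)$ is precisely what is being shown --- as written the step is circular. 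It is patched by observing that local finiteness gives $\rad P_x=\bigoplus_{\alpha\colon x\to y}P_y$, hence every simple $S_x$ lies in $rep^+(Q)$, and extension-closedness then absorbs every finite-dimensional representation as a finite iterated extension of simples.
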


As a result of Proposition \ref{all strong} we have 

\begin{cor}\label{all all strong}
Let $\mathcal C$ be $rep^+(Q)$ or $rep^-(Q)$ for some strongly locally finite quiver $Q$.  Then each minimal right determiner in $\mathcal C$ is strong.
\end{cor}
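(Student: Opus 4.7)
The plan is to apply Proposition \ref{all strong} in each case by verifying one of its three sufficient conditions for a minimal right determiner to be strong. The two cases must exploit different conditions because $rep^+(Q)$ and $rep^-(Q)$ enjoy complementary structural properties: the former has enough projective objects, while the latter has enough injective objects, and Proposition \ref{all strong} is not self-dual.

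For $\mathcal C = rep^+(Q)$, I would simply invoke Corollary \ref{all strong cor}. By definition every finitely presented representation is a cokernel of a map between finite direct sums of the principal projectives $P_x$, so there is an epimorphism from a projective object onto every $M \in rep^+(Q)$. Since $rep^+(Q)$ is Krull-Schmidt abelian, Proposition \ref{s cover} gives that $M$ admits a projective cover. Applied to $\cok f$ for any morphism $f$, this verifies condition (3) of Proposition \ref{all strong}, and we conclude that the minimal right determiner of $f$ is strong.

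For $\mathcal C = rep^-(Q)$ the strategy is to verify condition (2), namely that $\soc \cok f$ is a finite direct sum of simple objects. Given a morphism $f:X\rightarrow Y$ in $rep^-(Q)$, the cokernel $\cok f$ lies in $rep^-(Q)$ since the category is abelian, so by definition of finite co-presentation it embeds into a finite direct sum $I_0 = \bigoplus_{i=1}^n I_{x_i}$ of principal injectives. Since $I_x$ is the injective envelope of $S_x$ in $rep(Q)$, one has $\soc I_x = S_x$, hence $\soc I_0 = \bigoplus_{i=1}^n S_{x_i}$ is a finite direct sum of simple objects. Consequently $\soc \cok f \subseteq \soc I_0$ is itself a finite direct sum of simples, and Proposition \ref{all strong}(2) yields the conclusion.

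The main subtlety is locating the right sufficient condition in the $rep^-(Q)$ case, since the natural ``dual'' of condition (3) would give an injective envelope of $\cok f$ rather than a projective cover, which is not what Proposition \ref{all strong} asks for. The resolution is to use condition (2) and exploit the standard identification of socles of principal injectives over a strongly locally finite quiver to bound $\soc \cok f$ uniformly in terms of the ambient injective co-presentation.
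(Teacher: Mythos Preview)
Your proposal is correct and matches the paper's approach, which consists only of the sentence ``As a result of Proposition \ref{all strong} we have\ldots'' without further detail. You have correctly identified that condition (3) of Proposition \ref{all strong} (via Corollary \ref{all strong cor}) handles $rep^+(Q)$, and that condition (2) is the right tool for $rep^-(Q)$, where the embedding $\cok f \hookrightarrow I_0 = \bigoplus_{i=1}^n I_{x_i}$ forces $\soc\cok f$ to be a subobject of the finite semisimple $\soc I_0 = \bigoplus_{i=1}^n S_{x_i}$.
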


\subsection{Almost split sequences} Let $Q$ be a strongly locally finite quiver.  The opposite quiver $Q^{op}$ is a quiver obtained by reversing all the arrows in $Q$. The functor $D=\Hom_k(-,k)$ defines a duality $D:rep(Q)\rightarrow rep(Q^{op})$ satisfying $D(P_x)=I_{x^o}$ and $D(I_x)=P_{x^o}$, where $x^o$ denotes the same vertex $x$ in the opposite quiver.

While take $A=\bigoplus_{x\in Q_0}P_x$ as a representation. The functor $\Hom_A(-,A)$ defines a duality $\Hom_A(-,A):proj(Q)\rightarrow proj(Q^{op})$ satisfying $\Hom_A(P_x, A)\simeq P_{x^o}$.

\begin{prop}[\cite{BLP} 1.19]
There is a Nakayama equivalence:
$$
\nu=D\Hom_A(-,A): proj(Q)\rightarrow inj(Q): P_x\rightarrow I_x
$$
\end{prop}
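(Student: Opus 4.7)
The plan is to recognise $\nu$ as the composition of the two contravariant equivalences stated in the paragraph preceding the proposition, and simply track what the composition does on the additive generators $P_x$. First, I would unpack the duality $\Hom_A(-,A) \colon proj(Q) \to proj(Q^{op})$. Since $A = \bigoplus_{x \in Q_0} P_x$ and each object of $proj(Q)$ is a finite direct sum of principal projectives, for every $x$ we have $\Hom_A(P_x, A) = \bigoplus_{y \in Q_0} \Hom(P_x, P_y)$, and the $y$-component is the $k$-span of paths from $y$ to $x$ in $Q$. Interval finiteness of $Q$ forces each such space to be finite-dimensional, and reading these paths in $Q^{op}$ as paths $x^o \to y^o$ identifies the whole datum with the representation $P_{x^o}$ of $Q^{op}$. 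Extending $k$-linearly and using additivity yields the contravariant equivalence $proj(Q) \to proj(Q^{op})$.

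Second, the $k$-linear duality $D = \Hom_k(-,k)$ restricts to a contravariant equivalence $proj(Q^{op}) \to inj(Q)$: vertex-by-vertex one has $D(P_{x^o})(y) = \Hom_k(P_{x^o}(y), k)$, with arrows acting by transposition of the linear maps defining $P_{x^o}$. Unwinding the definition of $I_x$ and using the canonical pairing between paths $x \to y$ and their duals gives $D(P_{x^o}) = I_x$ as representations of $Q$. Once again it is strong local finiteness that makes $D$ an honest duality on these locally finite-dimensional spaces (no issues with the double-dual).

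Composing two contravariant equivalences produces a covariant equivalence, so $\nu = D \circ \Hom_A(-,A) \colon proj(Q) \to inj(Q)$ is an equivalence, and on the generators $\nu(P_x) = D(P_{x^o}) = I_x$. Essential surjectivity is then immediate, since $inj(Q) = \add\{I_x \mid x \in Q_0\}$ and $\nu$ preserves finite direct sums; full faithfulness is inherited from the two constituent dualities. I expect no conceptual obstacle here; the only thing requiring real care is bookkeeping of opposite categories — verifying that the $A$-action on $\Hom_A(P_x, A)$ coming from right composition in $A$ is indeed the action which turns the data into a representation of $Q^{op}$ (rather than of $Q$), so that after applying $D$ we land in $inj(Q)$ and not in $inj(Q^{op})$. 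Once this convention is pinned down the computation $\nu(P_x) = I_x$ is forced, and functoriality passes from the two dualities to their composition without further work.
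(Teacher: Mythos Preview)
The paper does not give its own proof of this proposition: it is simply cited as \cite{BLP}, Proposition~1.19, and the surrounding text only records the two constituent dualities $D$ and $\Hom_A(-,A)$ together with their values on the generators $P_x$ and $I_x$. Your proposal is exactly the natural unpacking of that citation --- you compose the two stated contravariant equivalences and check the effect on each $P_x$ --- so there is nothing to compare beyond noting that your sketch fills in precisely what the paper leaves to the reference, and your cautionary remark about tracking the $Q$ versus $Q^{op}$ bookkeeping is the only genuine point of care.
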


Using the Nakayama functor, we can define the Auslander-Reiten translation as follows. If $M\in rep^+(Q)$ with minimal projective resolution
$$
\xymatrix{0\ar[r]&P_1\ar[r]^f&P_0\ar[r]&M\ar[r]&0,}
$$
define $TrM\in rep^+(Q^{op})$ to be $\cok\Hom(f,A)$ and define $DTrM\in rep^-(Q)$ to be $\Ker \nu(f)$:
 $$
\xymatrix{0\ar[r]&DTrM\ar[r]&\nu(P_1)\ar[r]^{\nu(f)}&\nu(P_0)\ar[r]&0.}
$$
Dually we can define $TrDM$ for $M\in rep^-(Q)$.

Definitions of almost split sequences in the category $rep(Q)$, $rep^+(Q)$ and $rep^-(Q)$ are the same as in section \ref{ars def}.
We have the existence theorem for almost split sequences:

\begin{thm}[\cite{BLP} 2.8]
Let $Q$ be a strongly locally finite quiver, and let $M\in rep(Q)$ be indecomposable.\begin{enumerate}
\item If $M\in rep^+(Q)$ is not projective, then $rep(Q)$ admits an almost split sequence \\$\xymatrix{0\ar[r]&DTrM\ar[r]&N\ar[r]&M\ar[r]&0}$, where $DTrM\in rep^-(Q)$.
\item If $M\in rep^-(Q)$ is not injective, then $rep(Q)$ admits an almost split sequence \\$\xymatrix{0\ar[r]&M\ar[r]&N\ar[r]&TrDM\ar[r]&0}$, where $TrDM\in rep^+(Q)$.
\end{enumerate}
Furthermore, these are all the almost split sequences in $rep(Q)$.
\end{thm}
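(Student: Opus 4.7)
The plan is to prove (1) directly and deduce (2) by applying the exact duality $D=\Hom_k(-,k):rep(Q)\to rep(Q^{op})$, which interchanges $rep^+(Q)$ with $rep^-(Q^{op})$, swaps $P_x$ with $I_{x^o}$, preserves indecomposability, and sends short exact sequences to short exact sequences in reversed form. Thus a given almost split sequence $0\to DTrM\to N\to M\to 0$ in $rep(Q)$ for indecomposable non-projective $M\in rep^+(Q)$ dualizes to an almost split sequence of the form required in (2) for $D(M)\in rep^-(Q^{op})$, and conversely. So I would focus the rest of the argument on the construction for (1).

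First I would fix a minimal projective resolution $0\to P_1\xrightarrow{f}P_0\to M\to 0$ with $P_0,P_1\in proj(Q)$, which exists by definition of $rep^+(Q)$ together with the Krull--Schmidt property. By minimality and the indecomposability of $M$, no indecomposable summand of $P_1$ is also a summand of $P_0$. Apply the Nakayama equivalence $\nu$ and set $\tau M := DTrM = \Ker\nu(f)$. To verify $\tau M\in rep^-(Q)$, note that local finiteness of $Q$ makes each $P_x$ finitely generated, so $\nu(P_0),\nu(P_1)$ are finite direct sums of $I_x$'s, hence lie in $inj(Q)\subseteq rep^-(Q)$; the sequences $0\to\tau M\to\nu(P_1)\to\Ima\nu(f)\to 0$ and $\Ima\nu(f)\hookrightarrow\nu(P_0)$ together exhibit an injective copresentation of $\tau M$.

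The main construction then proceeds by establishing the Auslander--Reiten formula
\[
D\Ext^1_{rep(Q)}(M,N)\;\cong\;\overline{\Hom}_{rep(Q)}(N,\tau M)
\]
for locally finite dimensional $N$, where $\overline{\Hom}$ denotes morphisms modulo those factoring through an object of $inj(Q)$. The essential input is the natural pairing $\Hom(P_x,N)\cong N(x)\cong DD\,N(x)\cong D\Hom(N,I_x)$, valid when $N(x)$ is finite dimensional. Applying $\Hom(-,N)$ to the minimal presentation of $M$ and using heredity of $rep(Q)$ produces a four-term exact sequence ending in $\Ext^1(M,N)\to 0$; dualizing through the pairing and comparing with $\Hom(N,-)$ applied to $0\to\tau M\to\nu(P_1)\to\nu(P_0)$ yields the formula, with the passage to $\overline{\Hom}$ accounting for maps that factor through the injectives $\nu(P_i)$. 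With the formula in hand, the class of $\mathrm{id}_{\tau M}\in\overline{\End}(\tau M)$---nonzero because minimality of the resolution of $M$ forbids any summand of $\tau M$ from sitting in $inj(Q)$, lest $P_0$ and $P_1$ share a summand---corresponds to a canonical $\eta\in\Ext^1(M,\tau M)$. By naturality of the AR formula, a morphism $h:L\to M$ from an indecomposable $L$ lifts along $N\to M$ iff $h^{*}\eta=0$ iff the induced map $\overline{\End}(\tau M)\to\overline{\Hom}(\tau M,\tau M)$ kills $\mathrm{id}_{\tau M}$, which by locality of $\End(\tau M)$ happens exactly when $h$ is not a split epimorphism; hence the sequence representing $\eta$ is almost split.

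The main obstacle will be rigorously establishing the Auslander--Reiten formula in this infinite, non-artinian setting: one must verify compatibility of the pairing with the $\End(P_i)$-actions, confirm finite dimensionality of the relevant $\Hom$- and $\Ext$-spaces using that $Q$ is interval-finite and $M\in rep^+(Q)$, and check that the quotient defining $\overline{\Hom}$ precisely matches the image of $\Hom(N,\nu(P_0))\to\Hom(N,\nu(P_1))$. A secondary technical point is the membership $\tau M\in rep^-(Q)$, which rests on local finiteness. The closing claim that these exhaust all almost split sequences in $rep(Q)$ follows from uniqueness of almost split sequences up to isomorphism (the right-hand term determines the sequence): any almost split $0\to X\to Y\to Z\to 0$ with $Z$ indecomposable must have $Z$ non-projective and must admit a finite minimal projective presentation so that the construction above applies to $Z$---otherwise the candidate class $\eta$ and the sequence itself cannot exist---forcing $Z\in rep^+(Q)$ and $X\cong\tau Z$; the dual argument handles sequences with prescribed indecomposable left term, giving (2) and completing the classification.
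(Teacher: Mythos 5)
The first thing to note is that the paper contains no proof of this statement: it is imported verbatim from \cite{BLP} (Theorem 2.8), so there is no internal argument to compare yours against. Your strategy for the two existence claims --- minimal projective resolution, Nakayama functor, an Auslander--Reiten formula $D\Ext^1(M,N)\cong\overline{\Hom}(N,\tau M)$ for locally finite dimensional test objects $N$, and an extension class corresponding to a functional supported on the identity --- is essentially the route taken in the cited source, and that half of your sketch is sound in outline. Two small repairs: the reason $\tau M$ has no direct summand in $inj(Q)$ is simply that $f:P_1\rightarrow P_0$ is a monomorphism, so $\nu(f)$ cannot vanish on a nonzero direct summand of $\nu(P_1)$ (your ``lest $P_0$ and $P_1$ share a summand'' is not the operative point); and in the final step $\eta$ is not ``the class of $\mathrm{id}_{\tau M}$'' but an extension corresponding to a linear functional on $\overline{\End}(\tau M)$ killing the radical and not the identity, since your displayed isomorphism identifies $\Ext^1(M,\tau M)$ with the $k$-dual of $\overline{\End}(\tau M)$, not with $\overline{\End}(\tau M)$ itself.

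The genuine gap is the closing claim that these exhaust the almost split sequences of $rep(Q)$. Your argument there is circular: you assert that an almost split sequence $0\rightarrow X\rightarrow Y\rightarrow Z\rightarrow 0$ ``cannot exist'' unless $Z$ admits a finite minimal projective presentation, i.e.\ unless your construction applies to it. But an almost split sequence is defined intrinsically by the lifting properties of its end maps; nothing in that definition presupposes the existence of the class $\eta$ or of a presentation of $Z$ by objects of $proj(Q)$, and uniqueness of the almost split sequence with prescribed end term only identifies two sequences both of which are already known to exist --- it cannot rule out a sequence ending at some indecomposable $Z\notin rep^+(Q)$. What actually has to be proved is the structural statement that an indecomposable object of $rep(Q)$ which is the end term (resp.\ starting term) of an almost split sequence necessarily lies in $rep^+(Q)$ and is non-projective (resp.\ lies in $rep^-(Q)$ and is non-injective); for an infinite quiver this is nontrivial and is precisely the content separating the ``furthermore'' clause from the two existence statements. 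As written, your proposal establishes (1), and (2) by duality, but not the completeness assertion.
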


\begin{prop}[\cite{BLP} 3.6]\label{k findim}
If $\xymatrix{0\ar[r]&L\ar[r]&M\ar[r]&N\ar[r]&0}$ is a short exact sequence in $rep(Q)$, then it is an almost split sequence in $rep^+(Q)$ if and only if it is an almost split sequence in $rep(Q)$ with $L\in rep^b(Q)$.
\end{prop}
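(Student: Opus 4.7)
The plan is to prove the two implications separately, leveraging the existence theorem for almost split sequences in $rep(Q)$ (cited as \cite{BLP} 2.8) together with the functorial characterization of minimal right almost split morphisms from Proposition \ref{ass1}.

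For the direction $(\Leftarrow)$, suppose $0\to L\to M\to N\to 0$ is almost split in $rep(Q)$ with $L\in rep^b(Q)$. Since $L, N\in rep^+(Q)$ and $rep^+(Q)$ is closed under extensions in $rep(Q)$ (cited from \cite{BLP} 1.15), we have $M\in rep^+(Q)$. The almost split property restricts to $rep^+(Q)$: for any $f\colon Z\to N$ with $Z\in rep^+(Q)$ that is not a split epimorphism, $f$ is not a split epimorphism in $rep(Q)$ either, so it factors through $M\to N$ via some morphism $Z\to M$, and this factorization automatically lies in $rep^+(Q)$ since $rep^+(Q)$ is a full subcategory of $rep(Q)$.

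For the direction $(\Rightarrow)$, suppose $0\to L\to M\to N\to 0$ is almost split in $rep^+(Q)$, so $L, M, N\in rep^+(Q)$ and $N$ is indecomposable non-projective in $rep^+(Q)$. Since the indecomposable projectives of $rep^+(Q)$ are exactly the $P_x$ for $x\in Q_0$, which remain projective in $rep(Q)$, $N$ is non-projective in $rep(Q)$ as well. By \cite{BLP} 2.8, there exists an almost split sequence $\eta\colon 0\to DTrN\to N'\to N\to 0$ in $rep(Q)$ with $DTrN\in rep^-(Q)$. The goal is to identify $\eta$ with the given sequence, which would immediately yield $L\simeq DTrN\in rep^+(Q)\cap rep^-(Q)=rep^b(Q)$ and that the given sequence is almost split in $rep(Q)$. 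By Proposition \ref{ass1}, each almost split sequence corresponds to a minimal projective presentation of the simple contravariant functor $S_N:=\Hom(-, N)/\rad\Hom(-, N)$ in the appropriate functor category, and such presentations are unique up to isomorphism.

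The main obstacle is bridging the two functor categories $(rep^+(Q)^{op}, Ab)$ and $(rep(Q)^{op}, Ab)$. The proposed resolution is first to establish that $DTrN\in rep^+(Q)$, so that $\eta$ lies entirely in $rep^+(Q)$; then the restriction argument from the $(\Leftarrow)$ direction shows $\eta$ is almost split in $rep^+(Q)$, and uniqueness of minimal right almost split morphisms forces it to coincide with the given sequence. To show $DTrN\in rep^+(Q)$, I would argue by contradiction: if not, construct a non-split epimorphism $Z\to N$ in $rep^+(Q)$ whose factorization through $N'$ in $rep(Q)$ cannot be taken inside $rep^+(Q)$, contradicting the almost split property of $M\to N$ in $rep^+(Q)$. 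Alternatively, one may compute $DTrN=\Ker(\nu(P_1)\to\nu(P_0))$ explicitly from the minimal projective resolution $0\to P_1\to P_0\to N\to 0$ of $N$ in $rep^+(Q)$ and verify directly that this kernel is finite dimensional under the hypothesis.
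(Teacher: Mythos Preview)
The paper does not supply its own proof of this proposition; it is quoted verbatim from \cite{BLP}, Proposition~3.6, and used as a black box. So there is no ``paper's proof'' to compare against, and your proposal must be judged on its own merits.

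Your $(\Leftarrow)$ direction is essentially correct. One small point you leave implicit is why $N\in rep^+(Q)$: this follows from the classification of almost split sequences in $rep(Q)$ (\cite{BLP}~2.8), which says the right-hand term of any such sequence lies in $rep^+(Q)$. With that in hand, extension-closure gives $M\in rep^+(Q)$, and the restriction of the right almost split property is exactly as you describe; since $\End(L)$ is local, Proposition~\ref{ass1}(3) finishes.

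Your $(\Rightarrow)$ direction, however, has a genuine gap. You correctly identify that the whole argument hinges on showing $DTrN\in rep^+(Q)$ (equivalently $DTrN\in rep^b(Q)$), but neither of your two suggested routes is actually carried out, and neither is routine. The contradiction argument is only a sketch: you would need to produce a specific non-split-epi $Z\to N$ with $Z\in rep^+(Q)$ that cannot factor through $M$, and it is not clear how the failure of $DTrN$ to lie in $rep^+(Q)$ produces such a morphism. The ``explicit computation'' route is even less developed: the kernel $\Ker(\nu P_1\to\nu P_0)$ is finite-dimensional only under additional hypotheses on the support of $N$, and you do not say what those are or why they hold here. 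In the original source this step uses structural facts about finitely presented representations of strongly locally finite quivers (support arguments and the concrete description of $DTr$) that go beyond the general categorical machinery you invoke. As written, the $(\Rightarrow)$ half is a plausible outline but not a proof.
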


Notice that not every indecomposable object in $rep^+(Q)$ admits an almost split sequence. We have the following definition:

\begin{defn}\label{lr ar cat}
We say that an additive category $\mathcal C$ is a {\bf right Auslander-Reiten category} if every indecomposable object in $\mathcal C$ is the ending term of a minimal right almost split monomorphism or the ending term of an almost split sequence; a {\bf left Auslander-Reiten category} if every indecomposable object in $\mathcal C$  is the staring term of a minimal left almost split epimorphism or the starting term of an almost split sequence; and an {\bf Auslander-Reiten category} if it is left and right Auslander-Reiten.
\end{defn}

When $\mathcal C$ is an abelian category, $\mathcal C$ is right Auslander-Reiten if and only if every indecomposable non-projective object is the ending term of an almost split sequence and every indecomposable projective object has a simple top; $\mathcal C$ is left Auslander-Reiten if and only if every indecomposable non-injective object is the starting term of an almost split sequence and every indecomposable injective object has a simple socle (\cite{BLP}, 2.2). 
\vskip5 pt

Due to \cite{BLP}, we have the following characterization of right Auslander-Reiten category depending on the combinatorial property of the quiver:

\begin{thm}[\cite{BLP}, 3.7]\label{left ar}
If $Q$ is a strongly locally finite quiver, then\begin{enumerate}
\item $rep^+(Q)$ is left Auslander-Reiten if and only if $Q$ has no right infinite path; 
\item $rep^+(Q)$ is right Auslander-Reiten if and only if $Q$ has no left infinite path, or else $Q$ is a left infinite or double infinite path.
\end{enumerate}
\end{thm}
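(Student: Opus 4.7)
The key combinatorial dictionary, which I would establish first via K\"onig's lemma applied to the locally finite tree of paths starting at a fixed vertex, is: $P_x$ is finite-dimensional iff no right infinite path begins at $x$, and dually $I_x$ is finite-dimensional iff no left infinite path ends at $x$. With this in hand, each equivalence in the theorem becomes a translation between path combinatorics and the availability of almost split sequences in $rep^+(Q)$, mediated by Proposition \ref{k findim} (which says an almost split sequence in $rep(Q)$ descends to $rep^+(Q)$ exactly when its left end lies in $rep^b(Q)$).

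\textbf{Part (1).} For the forward direction ``no right infinite path $\Rightarrow$ left AR'', the key observation is that every $P_x \in rep^b(Q)$, so every object of $rep^+(Q)$ -- having a two-term resolution by finite sums of $P_x$'s -- is finite-dimensional, i.e.\ $rep^+(Q) = rep^b(Q)$. Then for a non-injective indecomposable $M$, the almost split sequence $0 \to M \to N \to TrDM \to 0$ from \cite{BLP} 2.8 has left end $M \in rep^b(Q)$, hence descends to $rep^+(Q)$ by Proposition \ref{k findim}; the indecomposable injectives in $rep^+(Q)=rep^b(Q)$ are the $I_x \in rep^b(Q)$ with simple socle $S_x$, covering the injective case. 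For the converse, a right infinite path starting at $x$ produces an indecomposable projective $P_x$ of infinite dimension. Proposition \ref{k findim} excludes any almost split sequence starting at $P_x$ in $rep^+(Q)$, and the dual of Lemma \ref{mras mono} would realize any minimal left almost split epi from $P_x$ as an injective envelope of a simple object -- I would rule this out by evaluating $P_x$ along the given right infinite path, producing infinitely many nonzero components and contradicting the finite-dimensional structure of any injective envelope of a simple in $rep^+(Q)$.

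\textbf{Part (2).} For right AR, the indecomposable projectives already have simple tops $S_x$, so the issue reduces to ensuring the almost split sequence $0 \to DTrM \to N \to M \to 0$ for each non-projective indecomposable $M$ descends to $rep^+(Q)$, i.e.\ $DTrM \in rep^b(Q)$. Writing $M$ via its minimal projective presentation $P_1 \to P_0 \to M \to 0$, we have $DTrM = \Ker \nu(f) \subseteq \nu(P_1) = \bigoplus I_{x_i}$, so ``no left infinite path'' makes every $I_x$ finite-dimensional and forces $DTrM \in rep^b(Q)$, giving right AR. The main obstacle is the exceptional alternative that $Q$ is a left infinite or double infinite path: here one must directly classify the indecomposables of $rep^+(Q)$ (which is very restrictive for such linear quiver shapes) and verify right AR by inspection, while for any other $Q$ admitting a left infinite path I would produce a specific non-projective indecomposable $M$ whose $DTrM$ fails to lie in $rep^b(Q)$ (obstructing descent). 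This combinatorial case analysis for the two exceptional quiver shapes is where the finest care is required; everything else is a mechanical application of Proposition \ref{k findim}, the path-to-dimension dictionary, and the AR existence theorem for $rep(Q)$.
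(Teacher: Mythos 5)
This statement is imported verbatim from \cite{BLP} (their Theorem 3.7); the paper gives no proof of it, so there is no internal argument to compare yours against. Judged on its own, your skeleton is the right one: the K\"onig-lemma dictionary ($P_x\in rep^b(Q)$ iff no right infinite path starts at $x$, dually for $I_x$), the existence theorem for almost split sequences in $rep(Q)$, and Proposition \ref{k findim} as the descent criterion are exactly the tools one needs. The easy directions (no right infinite path $\Rightarrow$ $rep^+(Q)=rep^b(Q)$ $\Rightarrow$ left AR; no left infinite path $\Rightarrow$ $DTrM\subseteq\nu(P_1)$ finite-dimensional $\Rightarrow$ right AR) are essentially complete as you sketch them, modulo checking that the indecomposable injectives of $rep^b(Q)$ really are the finite-dimensional $I_x$.

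There are two genuine gaps. First, in the converse of (1) you rule out a minimal left almost split epimorphism out of an infinite-dimensional $P_x$ by appealing to ``the finite-dimensional structure of any injective envelope of a simple in $rep^+(Q)$.'' That premise is unjustified and false in general: the injective envelope of $S_y$ is $I_y$, which is infinite-dimensional whenever a left infinite path ends at $y$, and nothing in the hypotheses of (1) excludes left infinite paths. The correct and much simpler move (which this paper itself uses when proving its Theorem D, direction $(3)\Rightarrow(1)$) is to take the initial arrow $x\to y$ of the right infinite path and observe that $P_y\hookrightarrow P_x$ is a non-split monomorphism between non-isomorphic indecomposables, so $P_y$ is not injective in $rep^+(Q)$ and hence cannot be the injective hull of a simple as the dual of Lemma \ref{mras mono} would require; combined with $P_y\notin rep^b(Q)$ and Proposition \ref{k findim} this kills both alternatives in the definition of left AR. Second, in (2) everything beyond the easy implication is deferred: you neither exhibit, for a quiver with a left infinite path that is not a left or double infinite path, a non-projective indecomposable $M\in rep^+(Q)$ with $DTrM\notin rep^b(Q)$, nor do you carry out the verification that $rep^+(Q)$ \emph{is} right AR for the two exceptional linear shapes --- and that exceptional clause is precisely the non-obvious content of part (2). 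As written, the proposal is a correct plan with the hardest quarter of the proof still outstanding.
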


Because $D: rep^+(Q)\rightarrow rep^-(Q^{op})$ is a duality, there is  a dual version of this theroem: 
\begin{cor}
If $Q$ is a strongly locally finite quiver, then
\begin{enumerate}
\item $rep^-(Q)$ is right Auslander-Reiten if and only if $Q$ has no left infinite path; 
\item $rep^-(Q)$ is left Auslander-Reiten if and only if $Q$ has no right infinite path, or else $Q$ is a right infinite or double infinite path.
\end{enumerate}
\end{cor}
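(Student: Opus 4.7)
The plan is to derive this corollary directly from Theorem \ref{left ar} by transporting that statement across the duality $D = \Hom_k(-,k)$. Specifically, $D$ induces a duality $D: rep^+(Q^{op}) \to rep^-(Q)$ (and its inverse in the other direction); one checks this by observing that $D$ interchanges projectives and injectives via $D(P_x) = I_{x^o}$ and $D(I_x) = P_{x^o}$, hence exchanges finitely presented with finitely co-presented representations.

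Next I would verify that this duality exchanges the left and right Auslander-Reiten properties. Since $D$ is a contravariant equivalence, it reverses the direction of morphisms: $g:A\to B$ is a split (resp.\ minimal, almost split, right almost split) morphism in $rep^+(Q^{op})$ if and only if $D(g):D(B)\to D(A)$ is a split (resp.\ minimal, almost split, left almost split) morphism in $rep^-(Q)$. Consequently, an almost split sequence in $rep^+(Q^{op})$ corresponds under $D$ to an almost split sequence in $rep^-(Q)$ with the starting and ending terms swapped, and a minimal right almost split monomorphism ending at an indecomposable projective corresponds to a minimal left almost split epimorphism starting at an indecomposable injective. By the characterization preceding Theorem \ref{left ar}, this means that $rep^+(Q^{op})$ is left Auslander-Reiten if and only if $rep^-(Q)$ is right Auslander-Reiten, and $rep^+(Q^{op})$ is right Auslander-Reiten if and only if $rep^-(Q)$ is left Auslander-Reiten.

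It then remains to translate the combinatorial conditions on $Q^{op}$ into conditions on $Q$. Reversing all arrows turns right infinite paths into left infinite paths and vice versa, while double infinite paths are preserved. Applying Theorem \ref{left ar}(1) to $Q^{op}$ yields: $rep^+(Q^{op})$ is left Auslander-Reiten iff $Q^{op}$ has no right infinite path iff $Q$ has no left infinite path. By the duality step this is equivalent to $rep^-(Q)$ being right Auslander-Reiten, proving (1). Similarly, Theorem \ref{left ar}(2) applied to $Q^{op}$ gives: $rep^+(Q^{op})$ is right Auslander-Reiten iff $Q^{op}$ has no left infinite path or $Q^{op}$ is a left infinite or double infinite path iff $Q$ has no right infinite path or $Q$ is a right infinite or double infinite path, which via the duality is equivalent to $rep^-(Q)$ being left Auslander-Reiten, yielding (2).

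The only non-routine step is checking that the duality $D$ really does send the whole package of structures used to define the left/right Auslander-Reiten property in one category to the matching structures in the other. This is essentially formal, but one must be careful because the definition (Definition \ref{lr ar cat}) distinguishes between indecomposable objects admitting almost split sequences and indecomposable projectives/injectives admitting minimal almost split monomorphisms/epimorphisms. Since $D$ exchanges $proj(Q^{op})$ with $inj(Q)$ and indecomposable with indecomposable, and since (by the remark after Definition \ref{lr ar cat}) being right (resp.\ left) Auslander-Reiten in an abelian category is equivalent to a condition phrased only in terms of non-projective (resp.\ non-injective) indecomposables admitting almost split sequences together with projectives (resp.\ injectives) having simple tops (resp.\ simple socles), the duality transports the two conditions onto each other cleanly.
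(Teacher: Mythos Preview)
Your proposal is correct and follows essentially the same approach as the paper, which simply states ``Because $D: rep^+(Q)\rightarrow rep^-(Q^{op})$ is a duality, there is a dual version of this theorem'' and records the corollary without further argument. You have merely spelled out the routine verifications (how $D$ exchanges projectives with injectives, left with right almost split morphisms, and left/right infinite paths) that the paper leaves implicit.
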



Combining with Corollary \ref{all strong cor}, we have:

\begin{cor} \label{mrd quiver}
Let $Q$ be a strongly locally finite quiver and $f:X\rightarrow Y$ is a morphism in the category $rep^+(Q)$. Then $f$ has a unique minimal right determiner if and only if the intrinsic kernel $\widetilde\Ker f$ is in $rep^b(Q)$, $\soc\cok f$ is essential. In this case, the minimal right determiner is given by the formula $TrD \widetilde\Ker f\oplus P(\soc\cok f)$.
\end{cor}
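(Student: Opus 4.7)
The plan is to specialize the general criterion for existence of a strong minimal right determiner (Corollary \ref{enough proj}) to the category $\mathcal C = rep^+(Q)$, and then translate the almost-split-sequence condition on $\widetilde\Ker f$ into the purely combinatorial statement $\widetilde\Ker f \in rep^b(Q)$ using Proposition \ref{k findim}.

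First I would check the hypotheses: $rep^+(Q)$ is $\Hom$-finite, hereditary, and abelian by \cite{BLP} 1.15, and it has enough projective objects because by definition every object of $rep^+(Q)$ admits a projective resolution by principal projectives $P_x$. Hence Corollary \ref{enough proj} applies and gives that $f$ admits a minimal right determiner iff (a) each indecomposable summand of $\widetilde\Ker f$ is the starting term of an almost split sequence in $rep^+(Q)$ and (b) $\soc\cok f$ is essential. Moreover, Corollary \ref{all all strong} guarantees that any minimal right determiner in $rep^+(Q)$ is strong, and strong minimal right determiners are automatically unique (by Remark \ref{ss unique}); so ``$f$ admits a minimal right determiner'' is the same as ``$f$ admits a unique minimal right determiner.''

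Next I would translate condition (a). Proposition \ref{k findim} says that a short exact sequence in $rep(Q)$ with starting term $K$ is almost split in $rep^+(Q)$ iff it is almost split in $rep(Q)$ with $K \in rep^b(Q)$. Combined with the Bautista--Liu--Paquette existence theorem for almost split sequences in $rep(Q)$, an indecomposable $K \in rep^+(Q)$ is the starting term of an almost split sequence in $rep^+(Q)$ exactly when $K \in rep^b(Q)$, and the right end-term is then $TrDK$. Since $rep^+(Q)$ is Krull--Schmidt, $\widetilde\Ker f$ is a finite direct sum of indecomposables, and condition (a) collapses to the single statement $\widetilde\Ker f \in rep^b(Q)$, producing the claimed equivalence.

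For the explicit formula, Theorem \ref{main} identifies the strong minimal right determiner as $\add(\{N\}\cup\{P(S)\})$, where $N$ ranges over right end-terms of almost split sequences starting at indecomposable summands $K$ of $\widetilde\Ker f$ and $P(S)$ over projective covers of simple subobjects of $\cok f$. By the previous paragraph each such $N$ equals $TrDK$, and since $\soc\cok f$ is essential its simple summands exhaust the simple subobjects of $\cok f$; projective covers of simples $S_x$ in $rep^+(Q)$ are just the principal projectives $P_x$, so assembling everything yields $\mathcal D = \add\bigl(TrD\widetilde\Ker f \oplus P(\soc\cok f)\bigr)$. The main technical delicacy I anticipate is verifying that finite-dimensional indecomposable summands of $\widetilde\Ker f$ are indeed captured by the existence theorem in $rep(Q)$ (i.e., produce honest almost split sequences rather than degenerating into injective boundary cases), but this is handled uniformly by Proposition \ref{k findim} and the dictionary between the $rep^+$ and $rep$ settings.
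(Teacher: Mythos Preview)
Your proposal is correct and matches the paper's own (implicit) argument: the paper simply states this corollary as a consequence of combining Corollary \ref{all strong cor}/\ref{enough proj} with the Bautista--Liu--Paquette results (Proposition \ref{k findim} and Theorem 2.8), exactly as you outline. Your handling of the one genuine subtlety---that an indecomposable summand $K$ of $\widetilde\Ker f$ is automatically non-injective (since otherwise the inclusion $K\hookrightarrow X_0$ would split, contradicting right minimality), so the existence theorem for almost split sequences applies---is more explicit than the paper's treatment.
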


 We are going to prove Theorem \ref{no r inf path}, which gives a characterization of the existence of minimal right determiner depending on the combinatorial property of the quiver:
 \\
 
\noindent {\bf Theorem D.}
{\it
Let $Q$ be a strongly locally finite quiver. Then the following are equivalent:\\
\indent $(1)$ $Q$ has no right infinite paths.\\
\indent $(2)$ $rep^+(Q)$ is left Auslander-Reiten.\\
\indent $(3)$ Any morphism $f:X\rightarrow Y$ in $rep^+(Q)$ has a unique minimal right determiner.
} 

\begin{proof}
(1) $\iff$ (2) follows directly from Theorem \ref{left ar} (1). It suffice to prove (1) $\iff$ (3)

(1) $\Rightarrow$ (3): Suppose $Q$ has no right infinite path. Then $rep^+(Q)$ is left Auslander-Reiten. So each indecomposable non-injective object is the starting term of an almost split sequence. By Proposition \ref{k findim}, each indecomposable non-injective object is in $rep^b(Q)$. Hence, for any right minimal non-isomorphism $f$ in $rep^+(Q)$, $\Ker f$ is in $rep^b(Q)$. On the other hand, since $Q$ has no right infinity path, $\soc M$ is essential for any $M\in rep^+(Q)$ (\cite{BLP}, 1.1). Hence, for any morphism $f$ in $rep^+(Q)$, $\soc\cok f$ is essential.
Therefore, by Corollary \ref{mrd quiver}, any morphism $f$ has a minimal right determiner.

(3) $\Rightarrow$ (1): Suppose any morphism $f$ in $rep^+(Q)$ has a minimal right determiner. Then by Corollary \ref{mrd quiver}, the intrinsic kernel of $f$ is in $rep^b(Q)$. For any indecomposable non-injective object $M\in rep^+(Q)$, since $M$ is non-injective, then there is a non-split monomorphism $f:M\rightarrow N$. Therefore $M$ is the intrinsic kernel of the canonical epimorphism $N\rightarrow \cok f$. Hence $M$ is in $rep^b(Q)$. If $Q$ has a right infinite path $p$ with initial arrow $x\rightarrow y$,  then $P_y\not\in rep^b(Q)$ because $p$ is infinite. Therefore, $P_y$ must be an injective object in $rep^+(Q)$. However, this implies that the embedding $P_y\hookrightarrow P_x$ splits, which is absurd. 
\end{proof}
\vskip5pt

As mentioned at the beginning there is a dual notion of left determined morphisms and also minimal left determiners. It is left for the readers to formulate the details. With that, we can give the following dual statement:

\begin{cor} 
Let $Q$ be a strongly locally finite quiver. Then the following are equivalent:\\
$(1)$ $Q$ has no left infinite path.\\
$(2)$ $rep^-(Q)$ is right Auslander-Reiten.\\
$(3)$ Every morphism $f:X\rightarrow Y$ in the category $rep^-(Q)$ has a unique minimal left determiner.\\
\end{cor}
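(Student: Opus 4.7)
The plan is to reduce everything to Theorem \ref{no r inf path} applied to the opposite quiver $Q^{op}$, using the standard $k$-duality $D=\Hom_k(-,k)$. Recall from Section~\ref{inf q} that $D$ restricts to a contravariant equivalence $D\colon rep^-(Q)\to rep^+(Q^{op})$, and that under $D$ a left infinite path in $Q$ (a path $\cdots\to\circ\to\circ\to\circ$) becomes a right infinite path in $Q^{op}$ (a path $\circ\to\circ\to\circ\to\cdots$), and conversely. So condition (1) here is equivalent to ``$Q^{op}$ has no right infinite paths,'' which is condition (1) of Theorem \ref{no r inf path} applied to $Q^{op}$.

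For the equivalence (1)$\Leftrightarrow$(2), the duality $D$ sends left almost split morphisms to right almost split morphisms and preserves minimality. Hence $rep^-(Q)$ is right Auslander-Reiten if and only if $rep^+(Q^{op})$ is left Auslander-Reiten, which by Theorem \ref{no r inf path}(2) applied to $Q^{op}$ is equivalent to $Q^{op}$ having no right infinite paths, i.e.\ condition (1). (This part is in any case already recorded as the corollary to Theorem \ref{left ar} in the excerpt.)

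For the remaining equivalence (1)$\Leftrightarrow$(3), I would check that the notion of minimal left determiner transports under $D$ to the notion of minimal right determiner. By Definition \ref{mdo}(5), a subcategory $\mathcal D\subseteq rep^-(Q)$ is a (minimal) left determiner of $f\colon X\to Y$ exactly when $D(\mathcal D)\subseteq rep^+(Q^{op})$ is a (minimal) right determiner of $D(f)\colon D(Y)\to D(X)$; uniqueness transports as well since $D$ is a bijection on isomorphism classes of subcategories closed under sums and summands. Thus every morphism in $rep^-(Q)$ has a unique minimal left determiner if and only if every morphism in $rep^+(Q^{op})$ has a unique minimal right determiner, which by Theorem \ref{no r inf path}(3) applied to $Q^{op}$ is equivalent to $Q^{op}$ having no right infinite paths, i.e.\ condition (1).

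The only step that needs more than a pure citation of duality is the first one: verifying carefully that $D$ takes minimal (left/right) almost split morphisms and left/right determiners to their duals. This is routine but is the place where one must be careful about which arrows reverse; no substantive new argument beyond Theorem \ref{no r inf path} is required.
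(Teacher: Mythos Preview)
Your proposal is correct and matches the paper's approach exactly: the paper presents this corollary as the dual statement of Theorem~\ref{no r inf path} without giving any proof, simply remarking that the notions of left determined morphisms and minimal left determiners are dual and leaving the details to the reader. Your plan of transporting everything through the duality $D\colon rep^-(Q)\to rep^+(Q^{op})$ is precisely the intended argument, spelled out in the detail the paper omits.
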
 

\section{Examples}
In this section, we provide examples of right determiners in various categories. In some of them, the existence or uniqueness or the formula of the minimal right determiner might fail. 

\begin{exm} This is a classical example where $\mathcal C=\Lambda\textendash\modd$ for some hereditary artin algebra $\Lambda$. We will show that the minimal right determiner of a morphism agrees with Auslander-Reiten-Smal{\o}-Ringel formula.  \end{exm}
Take the quiver $Q=\xymatrix{1&2\ar[l]&3\ar[l]}$. Consider the non-zero morphism of $kQ$-modules $f:P_2\rightarrow I_2$.
\begin{center}
\begin{tikzpicture}[->]
\path (0,0) node (p1) {$P_1$}
(1,1) node (p2) {$P_2$}
(2,2) node (p3) {$P_3$}
(2,0) node (s2) {$S_2$}
(3,1) node (i2) {$I_2$}
(4,0) node (i3) {$I_3$};

\draw(p1)--(p2);
\draw(p2)--(p3);
\draw(p2)--(s2);

\draw(s2)--(i2);
\draw(p3)--(i2);
\draw(i2)--(i3);

\draw[-](2,1)--(1,0)--(2,-1)--(4,1)--(2,3)--(1,2)--(2,1);
\end{tikzpicture}
\end{center}

The support of the cokernel functor $\cok(-,f)$ is indicated by the modules in the box. As we can see, the minimal right determiner of $f$ is $S_2\oplus P_3\simeq \tau^-\Ker f\oplus P(\soc\cok f)$ which is exactly given by the Auslander-Reiten-Smal{\o}-Ringel formula.


\begin{exm} In this example, we show that the condition $\mathcal C$ being hereditary is necessary in the Auslander-Reiten-Smal{\o}-Ringel formula.  \end{exm}
Consider the Nakayama algebra $kQ/<\gamma\beta\alpha, \beta\alpha\gamma>$ given by:
$$
\xymatrix{&1\ar[dr]^\alpha&\\
2\ar[ur]^\gamma&&3\ar[ll]^\beta}
$$

 and the morphism $f:\begin{smallmatrix}3\\2\\1\end{smallmatrix}\longrightarrow \begin{smallmatrix}1\\3\\2\end{smallmatrix}$. The AR-quiver is given below:
\vskip10pt

\begin{center}
\begin{tikzpicture}[->]
\foreach \x in {0,1,2,3}
\foreach \y in {0,1}
{
\filldraw (\x,\y) circle(0.5pt);
}
\foreach \x in {-0.5,0.5,1.5,2.5}
\foreach \y in {0.5}
{
\filldraw (\x,\y) circle(0.5pt);
}
\filldraw(.5,1.5) circle(0.5pt);

\foreach \x in {0,1,2}
\foreach \y in {0}
{
\draw (\x,\y)--(\x+0.5,\y+0.5);
}
\foreach \x in {-0.5,0.5,1.5,2.5}
\foreach \y in {0.5}
{
\draw (\x,\y)--(\x+0.5,\y+0.5);
}
\draw(0,1)--(0.5,1.5);

\foreach \x in {-0.5,0.5,1.5,2.5}
\foreach \y in {0.5}
{
\draw (\x,\y)--(\x+0.5,\y-0.5);
}
\foreach \x in {0,1,2}
\foreach \y in {1}
{
\draw (\x,\y)--(\x+0.5,\y-0.5);
}
\draw(0.5,1.5)--(1,1);

\draw[-][dashed](0,0)--(0,2);
\draw[-][dashed](3,0)--(3,2);

\draw[-] (0,-0.5) node {$S_1$};
\draw[-] (1,-0.5) node {$S_2$};
\draw[-] (2,-0.5) node {$S_3$};
\draw[-] (3,-0.5) node {$S_1$};

\draw[-] (1.2,1.2) node {$I_1$};
\draw[-] (2.2,1.2) node {$P_1$};
\draw[-] (2.7,1.2)--(1.1,-.3)--(.6,0.2)--(2.2,1.7)--(2.7,1.2);
\draw[-][dashed](0,0)--(1,0)--(2,0)--(3,0);
\end{tikzpicture}
\end{center}

The minimal right determiner of $f$ is just $S_2\simeq \tau^-\Ker f$.


\begin{exm}\label{n_ex} We construct morphisms that do not have minimal right determiners to illustrate that each one of the conditions in Theorem \ref{main} $(3)$ is necessary for the existence of a minimal right determiner. Let $Q$ be the following strongly locally finite quiver:
$$\xymatrix{\mathop{\cdot}\limits^1\ar[r]&\mathop{\cdot}\limits^2\ar[r]&\mathop{\cdot}\limits^3\ar[r]&\cdots}$$
\end{exm}

(1) Let $rep^+(Q)$ be the category of finitely presented representations. The AR-quiver contains two connected components:

\begin{center}
\begin{tikzpicture}[->]
\path (3,3) node (p1) {$P_1$}
( 2,2) node (p2) {$P_2$}
( 1,1) node (p3) {$P_3$}
( 0,0) node (p4) {$P_4$}
(-1,-1) node (p5) {$\cdots$};

\draw (p5)--(p4);
\draw (p4)--(p3);
\draw (p3)--(p2);
\draw (p2)--(p1);

\path (10,-1) node (1) {$I_1$}
(9,0) node (2) {$I_2$}
(8,1) node (3) {$I_3$}
(7,2) node (4) {$I_4$}

(8,-1) node (11) {$2$}
(7,0) node (21) {$\tiny\begin{array}{c}
  2  \\
  3
\end{array}$}
(6,-1) node (13) {$3$}
(6,1) node(31) {\tiny{{$\begin{array}{c}
  2  \\
  3  \\
  4
\end{array}$}}}

(5,2) node (void1) {}
(5,0) node (void2) {}
(6,3) node (void3) {}
(4,1) node (a) {$\cdots$};

\draw (2)--(1);
\draw (3)--(2);
\draw (4)--(3);

\draw (31)--(4);
\draw (21)--(3);
\draw (11)--(2);

\draw (31)--(21);
\draw (21)--(11);
\draw (13)--(21);

\draw (void1)--(31);
\draw (void2)--(31);
\draw (void2)--(13);
\draw (void3)--(4);

\draw[-][dashed] (5,-1)--(5.8,-1);
\draw[-][dashed] (6.2,-1)--(7.8,-1);
\draw[-][dashed](8.2,-1)--(9.8,-1);
\end{tikzpicture}
\end{center}

\noindent Consider the non-zero morphism $f: P_1\rightarrow I_1$.
Then $f$ is right determined by the set of all finitely generated injective representations $\{I_i\}$. However, the minimal right determiner does not exist, because $\Ker f\simeq P_2$ is not the starting term of an almost split sequence.

\noindent Consider the morphism $g: 0\rightarrow P_1$. It is right determined by the set of all finitely generated projective representations $\{P_i\}$. However, the minimal right determiner does not exist,  because $\soc\cok g=\soc P_1=0$ which is not essential.

(2) $rep^-(Q)$ is the category of finitely copresented modules. The AR-quiver of $rep^-(Q)$ contains just the AR-component which contains injectives. 

\noindent Consider the morphism $h: 0\rightarrow I_1$. 
It is right determined by the set of all finitely generated injective representations $\{I_i\}$. However, the  minimal right determiner does not exist, because $I_1\simeq\soc\cok h$ does not have a projective cover.


\begin{exm} We give an example where the minimal right determiner of a morphism exists and it consists of infinitely many non-isomorphic indecomposable objects. Let $Q$ be a strongly locally finite quiver, and consider again the category of finitely presented representations $rep^+(Q)$: \end{exm}
\begin{center}
\begin{tikzpicture}[->]
\path
(0,0) node (0) {$0$}
(1,0) node (2) {$2$}
(2,0) node (4) {$4$}
(0,1) node (1) {$1$}
(1,1) node (3) {$3$}
(2,1) node (5) {$5$}
(3,0.5) node (l) {$\cdots$};

\draw (1)--(0);
\draw (1)--(3);
\draw (3)--(2);
\draw (3)--(5);
\draw (5)--(4);

\draw(5)--(2.5,1);
\end{tikzpicture}
\end{center}

\noindent Consider the non-zero morphism $f: S_0\rightarrow P_1$ in $rep^+(Q)$. Then $f$ has a minimal right determiner $\mathcal D=\add\{P_{2i} | i>0\}$, since each $S_{2i}$ is a subobject of $\cok f$, for $i>0$. (see Theorem \ref{main})


\begin{exm} Sometimes the minimal right determiner formula (Theorem \ref{main}) still holds even when the $\Hom$-finite condition does not hold, which we illustrate in the following example: \end{exm}
\noindent Consider the following quiver $Q$ which is an  infinite quiver with no infinite paths:
$$
\xymatrix{\cdots&\mathop{\cdot}\limits^{-3}\ar[l]\ar[r]&\mathop{\cdot}\limits^{-2}
&\mathop{\cdot}\limits^{-1}\ar[l]\ar[r]&\mathop{\cdot}\limits^{0}
&\mathop{\cdot}\limits^{1}\ar[l]\ar[r]&\mathop{\cdot}\limits^{2}
&\mathop{\cdot}\limits^{3}\ar[l]\ar[r]&\cdots}
$$
Take $\mathcal C$ to be the category of locally finite dimensional representations $rep(Q)$ which is an abelian AR category. However, since the object $\mathop{\bigoplus}\limits_{i=0}^{\infty}S_i\in \mathcal C$ is an infinite direct sum of indecomposable objects, $\mathcal C$ is not Krull-Schmidt (hence not $\Hom$-finite).
Let $M$ be the indecomposable representation $M(x)=k$ for all $x\in Q_0$ and consider the embedding $f:S_0\rightarrow M$. Then $f$ has a minimal right determiner $\mathcal D=\add\{P_{2i}| i\neq 0\}$.

\begin{exm}  We show an example of a non-Krull-Schmidt hereditary category and compute the minimal right determiners of some morphisms. 
We also show that there is a morphism such that the minimal right determiner consists of indecomposable objects which cannot almost factor through that morphism. \end{exm}

Let $\mathbb Z\textendash\modd$ denote the category of all finitely generated $\mathbb Z$ modules. As we all know, every finitely generated $\mathbb Z$ module is a direct sum of a torsion free module and torsion module.

Suppose $p$ is a prime number, then all the almost split sequence for torsion modules are $0\rightarrow\mathbb Z_p\rightarrow \mathbb Z_{p^2}\rightarrow \mathbb Z_p\rightarrow0$ and $0\rightarrow\mathbb Z_{p^n}\rightarrow \mathbb Z_{p^{n+1}}\oplus Z_{p^{n-1}}\rightarrow \mathbb Z_{p^n}\rightarrow0$ for $n>1$.

It is also known that the $\mathbb Z$-homomorphism $f:\mathbb Z\rightarrow \mathbb Z$ is irreducible if and only if $f$ is multiplication by $p$ for some prime number $p$. The $\mathbb Z$-homomorphism $f:\mathbb Z\rightarrow \mathbb Z_{p^n}$ is never irreducible.

In conclusion, the AR-quiver of $\mathbb Z\textendash\modd$ consists of the following connected components for each prime number $p$:
$$
\begin{tikzpicture}[->]
\path (-3,0) node (2-2) {$\mathbb Z_{p}$}
(-1,0) node (20) {$\mathbb Z_{p}$}
(1,0) node (22) {$\mathbb Z_{p}$}
(3,0) node (24) {$\mathbb Z_{p}$}

(-2,1) node (4-2) {$\mathbb Z_{p^2}$}
(0,1) node (40) {$\mathbb Z_{p^2}$}
(2,1) node (42) {$\mathbb Z_{p^2}$}

(-3,2) node (31) {$\mathbb Z_{p^3}$}
(-1,2) node (32) {$\mathbb Z_{p^3}$}
(1,2) node (33) {$\mathbb Z_{p^3}$}
(3,2) node (34) {$\mathbb Z_{p^3}$}

(-4,1) node {$\cdots$}
(4,1) node {$\cdots$}
(-1,3) node {$\vdots$}
(1,3) node {$\vdots$};

\draw (31)--(4-2);
\draw (2-2)--(4-2);
\draw (4-2)--(32);
\draw (4-2)--(20);

\draw (32)--(40);
\draw (20)--(40);
\draw (40)--(33);
\draw (40)--(22);

\draw (33)--(42);
\draw (22)--(42);
\draw (42)--(34);
\draw (42)--(24);

\draw[dotted][-] (2-2)--(20);
\draw[dotted][-] (20)--(22);
\draw[dotted][-] (22)--(24);

\draw[dotted][-] (4-2)--(40);
\draw[dotted][-] (40)--(42);

\draw[dotted][-] (31)--(32);
\draw[dotted][-] (32)--(33);
\draw[dotted][-] (33)--(34);
\end{tikzpicture}
$$
together with the connected component which contains only the indecomposable module $\mathbb Z$ and irreducible morphisms given by multiplications by $p$ for all prime numbers $p$:

%
%
%
%

$$
\xymatrix{\mathbb Z\ar@(ul,dl)\ar@(dr,ur)\ar@(ur,ul)[]|{\times p}}
$$

Now consider the morphism $f:\mathbb Z_{p^m}\rightarrow \mathbb Z_{p^n}$ whose image is isomorphic to $\mathbb Z_{p^k}$, $k\leq n$.
If $k<n$, then $f$ has a minimal right determiner $\mathbb Z_{p^{m-k}}\oplus \mathbb Z$. Otherwise, $f$ has a minimal right determiner $\mathbb Z_{p^{m-k}}$.

The $\mathbb Z$-module homomorphism $g: \mathbb Z\rightarrow \mathbb Z_{p^n}$ is right determined by a set of modules $S_N=\{\mathbb Z_{p^k}| k\geq N\}$, for all $N$. But $g$ does not have a minimal right determiner.

Here is an important observation: $h=\times p: \mathbb Z\rightarrow\mathbb Z$ is always right determined by $\mathbb Z$ and $\mathbb Z$ is the unique minimal right determiner. However, it is not semi-strong (see Definition \ref{semi strong def}).


\vskip10pt

\begin{exm} \label{1221} In a non-$\Hom$-finite additive category, we show that the minimal right determiner may not be unique. 
Let $\mathcal C$ to be the path category of the following quiver:

$$
Q:\xymatrix{1\ar@/^/[r]&2\ar@/^/[l]}
$$
\noindent i.e. indecomposable objects in $\mathcal {C}$ are given by vertices $\{v_1, v_2\}$, and the set of morphisms $\Hom(v_i,v_j)$ are $k$-linear spaces generated by finite paths from $v_i$ to $v_j$.

\noindent Consider the zero morphism $f: 0\rightarrow v_1$.  Both $\add\{v_1\}$ and $\add\{v_2\}$ are the minimal right determiners of $f$.  
\end{exm}


\begin{exm}\label{wedge close} We construct a Krull-Schmidt non-$\Hom$-finite additive category such that
\begin{enumerate}
\item There is a morphism $f$ which has a unique minimal right determiner $\mathcal D$.
\item The set of all the right determiners of $f$ is not closed under intersections.
\item $\mathcal D$ is not semi-strong.
\end{enumerate}
 \end{exm}
Let $\mathcal C$ be an additive category consists of non-isomorphic indecomposable objects $X,Y, Z$ and $D_i$, for $i>0$. Let $f: X\rightarrow Y$ and a sequence of morphisms $g_mg_{m+1}\cdots g_n$, $\forall n>m>0$:

$$
\xymatrix{ \cdots\ar[r]&D_3\ar[r]^{g_6}&Z\ar[r]^{g_5}&D_2\ar[r]^{g_4}&Z\ar[r]^{g_3}&D_1\ar[r]^{g_2}&Z\ar[r]^{g_1}&Y}
$$
be all the non-zero morphisms between non-isomorphic indecomposable objects in $\mathcal C$ up to scalar multiplication, such that $g_1g_2\cdots g_n$ cannot factor through $f$, for all $n$.

Assume $Z$ and $D_i$ are pairwise non-isomorphic indecomposable objects. Then $f$ has a unique minimal right determiner $\mathcal D= \add\{Z\}$ and $f$ is also right determined by $\mathcal D_k=\add \{D_i\mid i\geq k \}$, $\forall k\geq1$ which are not minimal. So the set $\mathcal S$ of all the right determiners of $f$ is not closed under taking intersections. Notice that $\mathcal D$ is not semi-strong, since $Z$ cannot almost factor through~$f$.
\vskip10pt
 
 \begin{exm} \label{non-strong ex} We show an example of semi-strong but not strong minimal right determiner. \end{exm} 
 Let $Q$ be a strongly locally finite quiver:
 $$
 \xymatrix { 1\ar[r]\ar[d]&2\ar[r]\ar[d]&3\ar[r]\ar[d]&\cdots\\
                   1'\ar[r]&2'\ar[r]&3'\ar[r]&\cdots }
 $$
 Let $\mathcal C=rep(Q)$ be the category of locally finite representations (see Seciont \ref{reps}) and $M$ be the representation  
 $$
\xymatrix{k\ar[r]^1\ar[d]^1&k\ar[r]^1\ar[d]^1&k\ar[r]^1\ar[d]^1&\cdots\\
                   k&k&k&\cdots}
 $$
 
The morphism $f: 0\rightarrow M\oplus P_{1'}$ has a minimal right determiner $\add\{ P_{i'}, i'\geq 1\}$. But it is not strong, since the morphism $g: P_{1'}\stackrel{\left(\begin{smallmatrix}0\\1\end{smallmatrix}\right)}\rightarrow M\oplus P_{1'}$ is an absolute non-factorization of $f$.  Notice that  $\soc (M\oplus P_{1'})= \bigoplus_{i'\geq 1} S_{i'}$ is not essential.  The support of the functor $\cok(-f)$~is

$$
\tiny\xymatrix{ 
P_{2'}\ar[rd]&P_{3'}\ar[d]&\cdots\ar[ld]\\
P_{1'}\ar[r]&M\oplus P_{1'}&P_{1'}\ar[l]&P_{2'}\ar[l]&\cdots\ar[l]}
$$
 
 We still do not know whether every minimal right determiner in a $\Hom$-finite hereditary abelian category is strong. But this example may provide a negative answer to this question. So we conjecture that the morphism $f$ in this example fits in a $\Hom$-finite hereditary abelian category $\mathcal C'$ which we now define:

 \begin{conj}
 Let $\mathcal C^\prime$ be a full subcategory of $rep(Q)$ from Example \ref{non-strong ex} obtained inductively:
$\mathcal C^\prime_0$ is the full subcategory of $rep(Q)$ consisting of objects which are finite direct sums of $M$ and $P_{i'}$, $\forall i'\geq1$.  Define $\mathcal C^\prime_n$ to be the full subcategory of $rep(Q)$ closed under direct summands and finite direct sums, containing $\Ker f$ and $\cok f$ for any morphism $f\in \mathcal C'_{n-1}$ and also contains objects $B$ which have an exact sequences:
$$
0\rightarrow A\rightarrow B\rightarrow C\rightarrow 0
$$
 for some $A,C\in \mathcal C^\prime_{n-1}$. Then $\mathcal C^\prime:= \bigcup_{n\geq0} \mathcal C^\prime_n$ is a $\Hom$-finite hereditary abelian category.
 \end{conj}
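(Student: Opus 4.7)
The plan is to verify in order the three properties asserted by the conjecture: that $\mathcal C^\prime$ is abelian, hereditary, and $\Hom$-finite. The first two follow from general principles about exact abelian subcategories of hereditary abelian categories; the third is the substantive content.

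\textbf{Abelian and hereditary.} By the inductive construction, $\mathcal C^\prime$ contains a zero object (as the empty finite direct sum in $\mathcal C^\prime_0$) and is closed under finite direct sums, direct summands, kernels, cokernels, and extensions computed in the ambient category $rep(Q)$. Since $rep(Q)$ is abelian, the image-equals-coimage axiom transfers to $\mathcal C^\prime$, so the inclusion $\mathcal C^\prime \hookrightarrow rep(Q)$ is an exact functor and $\mathcal C^\prime$ is itself an abelian category. Closure of $\mathcal C^\prime$ under extensions identifies $\Ext^1_{\mathcal C^\prime}(A, B)$ with $\Ext^1_{rep(Q)}(A, B)$ for all $A, B \in \mathcal C^\prime$. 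Any $2$-extension $0 \to A \to X \to Y \to B \to 0$ in $\mathcal C^\prime$ factors through its image $Z = \Ima(X \to Y)$, which lies in $\mathcal C^\prime$, into two short exact sequences of $\mathcal C^\prime$. Hereditariness of $rep(Q)$ trivializes the Yoneda class; the intermediate object used to realize the trivialization is itself an extension of objects of $\mathcal C^\prime$ and so lies in $\mathcal C^\prime$. Hence $\Ext^2_{\mathcal C^\prime} = 0$ and $\mathcal C^\prime$ is hereditary.

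\textbf{Hom-finiteness.} I would induct on the level $n$, simultaneously proving that (i) every $X \in \mathcal C^\prime_n$ is locally finite-dimensional with $\sup_{v \in Q_0} \dim_k X(v) < \infty$, and (ii) $\Hom(A, B)$ is finite-dimensional for every $A, B \in \mathcal C^\prime_n$. The base case $n=0$ is a direct computation: $\Hom(M, M) = k$, $\Hom(P_{i'}, P_{j'}) = k$ if $j \leq i$ and $0$ otherwise, $\Hom(M, P_{i'}) = 0$, and $\Hom(P_{i'}, M) = k$. For the inductive step, the long exact sequences of $\Hom$ and $\Ext^1$ attached to the two types of constructions (short exact sequences $0 \to A_1 \to B \to A_2 \to 0$ and the four-term exact sequences $0 \to \Ker f \to X \to Y \to \cok f \to 0$ arising from $f \in \mathcal C^\prime_{n-1}$) reduce bounding $\Hom$ in level $n$ to bounding $\Hom$ and $\Ext^1$ between objects of $\mathcal C^\prime_{n-1}$. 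For $\Hom$ the key observations are that $M$ is generated by $M(1)$ and $P_{i'}$ by $P_{i'}(i')$, so $\Hom(M, X) \hookrightarrow X(1)$ and $\Hom(P_{i'}, X) \hookrightarrow X(i')$; dually, every morphism $X \to M$ or $X \to P_{i'}$ factors through a thin colimit-type quotient of $X$ whose dimension is controlled by (i).

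\textbf{The main obstacle.} The principal difficulty is that the $\Ext^1$ term in the long exact sequence can be large. The minimal projective resolution $0 \to K \to P_1 \to M \to 0$ has $K \cong \bigoplus_{j \geq 1} P_{(j+1)'}$, and a direct calculation gives $\Ext^1_{rep(Q)}(M, M) \cong \Hom(K, M) / \Hom(P_1, M) \cong k^{\mathbb N}/k$, which is infinite-dimensional. Thus a naive induction that treats $\Ext^1$ as an intermediate term will not close. Overcoming this requires a structural analysis of $\mathcal C^\prime_n$: although the space of extension classes between two fixed objects may be infinite-dimensional, each individual extension object $E$ obtained along the construction must be shown to admit a finite filtration whose successive quotients lie in a controlled finite set of isomorphism classes in $\mathcal C^\prime_{n-1}$, so that $\Hom(E, -)$ and $\Hom(-, E)$ are controlled by the dimension vector and a finite number of ``boundary'' parameters. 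Establishing this description of indecomposables in $\mathcal C^\prime_n$ — ideally yielding a Krull--Schmidt structure — is the bulk of the remaining work, and without it Hom-finiteness between particular extension objects must be verified case by case.
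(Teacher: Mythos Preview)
The statement you are attempting to prove is labeled \emph{Conjecture} in the paper, and the paper offers no proof of it; the author explicitly writes ``We still do not know whether every minimal right determiner in a $\Hom$-finite hereditary abelian category is strong'' and poses this construction as a candidate counterexample whose properties remain unverified. So there is no paper proof against which to compare your proposal.

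That said, your outline is an honest assessment of where the difficulty lies. The argument that $\mathcal C'$ is abelian and hereditary is essentially correct: $\mathcal C'$ is by construction a full subcategory of the hereditary abelian category $rep(Q)$ closed under finite direct sums, summands, kernels, cokernels, and extensions, and such a subcategory inherits both the abelian structure and the vanishing of $\Ext^2$. Your computation that $\Ext^1_{rep(Q)}(M,M)$ is infinite-dimensional is also correct and is exactly the reason the conjecture is nontrivial: it shows that $\mathcal C'$ is not $\Ext$-finite, so one cannot simply bootstrap $\Hom$-finiteness through long exact sequences.

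The genuine gap is the one you name yourself. Your proposed remedy --- a finite filtration of each object in $\mathcal C'_n$ by subquotients from a controlled list in $\mathcal C'_{n-1}$ --- is not established, and it is not clear that it is true as stated: an extension of $M$ by $M$ drawn from the infinite-dimensional $\Ext^1(M,M)$ need not decompose into finitely many pieces of previously understood type, and different extension classes can yield genuinely different indecomposables. Without either a classification of the indecomposables arising at each stage or a direct uniform bound on $\Hom$-spaces that bypasses $\Ext^1$, the argument for $\Hom$-finiteness does not close. In short, your proposal correctly isolates the open problem but does not resolve it, which is consistent with the paper leaving the statement as a conjecture.
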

%
 
Last but not least, we are going to show a couple of examples of $\Hom$-finite hereditary abelian $k$-categories which are not obtained from a strongly locally finite quiver.   

\begin{exm} (Continuous $A^\infty$-type) Define a $k$-category $\mathcal C$ as follows:  \end{exm}
The indecomposable objects are intervals $(a,b]$ with $a,b\in\mathbb R$. 
Morphisms are defined by 
\begin{eqnarray*}
\Hom((a,b],(c,d])\simeq 
\begin{cases}
k  & \text{if $a\leq c<b\leq d$ }\\
0  & \text{otherwise}
\end{cases}
\end{eqnarray*}

It is clear that $\emptyset$ is the zero object and $(-\infty, x]$ are the indecomposable projective objects.  Every non-split epimorphism $f$ does not have a minimal right determiner in this category.

The category $\mathcal C$ is also called the {\bf representation of real line} and has been studied in \cite{IT}.

%

\begin{exm}
(Mixed  $A^\infty$-type) Define a $k$-category $\mathcal C$ as follows: 
 \end{exm}
The indecomposable objects are
intervals $(a,b]$ with $a,b\in (-\infty,0]\cup\{1,2,3,\cdots\}$.  
Morphisms are defined by 
\begin{eqnarray*}
\Hom((a,b],(c,d])\simeq 
\begin{cases}
k  & \text{if $a\leq c<b\leq d$ }\\
0  & \text{otherwise}
\end{cases}
\end{eqnarray*}

In the positive half line, $\mathcal C$ behaves similar to the representations of the infinite quiver:
$$
\xymatrix{0&1\ar[l]&2\ar[l]&\cdots\ar[l]}
$$
We have simple objects $(k,k+1]$ for $k\geq0$ and almost split sequences are given by:
$$
\xymatrix{0\ar[r]&(a,a+1]\ar[r]&(a,a+2]\ar[r]&(a+1,a+2]\ar[r]&0}
$$
when $a\geq0$ and
$$
\xymatrix{0\ar[r]&(a,b]\ar[r]&(a,b+1]\oplus (a+1,b]\ar[r]&(a+1,b+1]\ar[r]&0}
$$
when $1\leq a+1\leq b$.

In the negative part $\mathcal C$ behaves similar to the continuous $A^\infty$-type.  Any non-isomorphism $f: (a,b]\rightarrow (c,d]$ does not have a minimal right determiner when $c<b<0$.

A non-zero morphism $f:(a,0]\rightarrow (c,d]$ has a minimal right determiner if and only if $a=c$. In this case the minimal right determiner is $(-\infty, 1]$.

\end{document}